\documentclass[10pt]{article}
\usepackage{amssymb}
\usepackage{amsmath}
\usepackage{amsthm}
\usepackage{hyperref}
\usepackage{bbm} 
\usepackage{thmtools, thm-restate} 
\usepackage{enumitem}   
\usepackage{fullpage} 


\usepackage[scaled=.98,sups]{XCharter}
\usepackage[charter,scaled=1.07]{newtxmath} 
\usepackage[cal=cm,frak=mma]{mathalfa} 

\usepackage{titlesec}
\titleformat{\section}
  {\normalfont\scshape\filcenter}{\thesection}{1em}{}
\titleformat{\subsection}
  {\normalsize\scshape}{\thesubsection}{1em}{}

\setlength{\marginparwidth}{.6in}
\setlength{\marginparsep}{.2in}

\usepackage{xcolor}
\definecolor{mint}{RGB}{216,254,246}
\definecolor{mintborder}{RGB}{140,220,210}
\definecolor{grayblueborder}{RGB}{100,150,150}
\definecolor{coolgrayborder}{RGB}{120,160,160}
\definecolor{tealborder}{RGB}{0,128,128}
\usepackage[textsize=tiny, color=mint, bordercolor=mintborder,disable]{todonotes} 

\usepackage{kotex}

\renewenvironment{abstract}
  {\begin{center}\begin{minipage}{0.85\linewidth}\vspace{-0.5em}\noindent\textsc{Abstract.} \ignorespaces}
  {\end{minipage}\end{center}}

\numberwithin{equation}{section}

\newtheorem{theorem}{Theorem}

\newtheorem{lemma}{Lemma}
\newtheorem{proposition}{Proposition}

\newtheorem{remark}{Remark}

\newcommand{\nrm}[1]{\Vert#1\Vert}

\newcommand{\tld}[1]{\widetilde{#1}}

\newcommand{\dist}{\mathrm{dist}\,}

\newcommand{\supp}{{\mathrm{supp}}\,}

\newcommand{\diam}{\mathrm{diam}\,}

\newcommand{\aleq}{\lesssim}
\newcommand{\ageq}{\gtrsim}

\newcommand{\ud}{\mathrm{d}}
\newcommand{\rd}{\partial}

\newcommand{\alp}{\alpha}

\newcommand{\gma}{\gamma}

\newcommand{\dlt}{\delta}
\newcommand{\Dlt}{\Delta}

\newcommand{\veps}{\varepsilon}

\newcommand{\tht}{\theta}

\newcommand{\omg}{\omega}
\newcommand{\Omg}{\Omega}
 
\newcommand{\zt}{\zeta}

\newcommand{\vphi}{\varphi}


\newcommand{\bfB}{{\bf B}}

\newcommand{\bfO}{{\bf O}}


\newcommand{\bbN}{\mathbb N}

\newcommand{\bbR}{\mathbb R}

\newcommand{\bbT}{\mathbb T}

\newcommand{\calA}{\mathcal A}

\newcommand{\calD}{\mathcal D}

\newcommand{\calF}{\mathcal F}

\newcommand{\calH}{\mathcal H}

\newcommand{\calK}{\mathcal K}
\newcommand{\calL}{\mathcal L}

\newcommand{\calN}{\mathcal N}

\newcommand{\calQ}{\mathcal Q}

\newcommand{\calT}{\mathcal T}

\newcommand{\calV}{\mathcal V}


\newcommand\blfootnote[1]{%
  \begingroup
  \renewcommand\thefootnote{}\footnote{#1}%
  \addtocounter{footnote}{-1}%
  \endgroup
}

\author{\normalsize {SEUNGJAE LEE}}
\date{}


\begin{document}

    \title{\large\textbf{SHARP LOCAL WELL-POSEDNESS OF $C^1$ VORTEX PATCHES}}
    \date\today

    \maketitle

    \blfootnote{\noindent Seungjae Lee: Department of Mathematical Sciences, Seoul National University, South Korea, email: uhf0125@snu.ac.kr \\
    \emph{2020 AMS Mathematics Subject Classification:} 76B47, 35Q35 \\
    \emph{Key words: vortex patches, well-posedness, critical space, Hilbert transform} }

    \begin{abstract}
    It is well known that the boundary dynamics of vortex patches is globally well-posed in the H\"older space $C^{1,\alpha}$ for $0<\alpha<1$, whereas the well-posedness in $C^1$ remains an open problem, even locally. In this paper, we establish the local well-posedness for vortex patches in the space $C^{1,\varphi}$ defined via a modulus of continuity $\varphi$ that satisfies certain structural assumptions. Our class includes curves that are strictly rougher than the H\"older-continuous ones, with prototypical examples being $\varphi(r) = (-\log r)^{-s}$ for $s>3$. Motivated by the fact that the velocity operator in the contour dynamics equation is a nonlinear variant of the Hilbert transform, we study the system of equations satisfied by the curve parametrization $\gamma \in C^{1,\varphi}$ and its Hilbert transform. In doing so, we derive several properties of the Hilbert transform and its variants in critical spaces, which are essential for controlling the velocity operator and its Hilbert transform.
    \end{abstract}

\tableofcontents

\newpage
\section{Introduction} \label{Sec:Intro}

\subsection{Vortex Patches}

We consider the two-dimensional Euler equations in vorticity form:
\begin{align} \label{eq:EE} \tag{EE}
    \begin{cases}
        & \rd_t \omg + (u\cdot\nabla) \omg = 0 \\
        & u = -\nabla^\perp (-\Dlt)^{-1} \omg \\
        & \omg_{t=0} = \omg_0.
    \end{cases}
\end{align}
A vortex patch is a weak solution to \eqref{eq:EE} of the form
\begin{align*}
    \omg(t,x) = k\mathbbm{1}_{\Omg(t)},
\end{align*}
where $\mathbbm{1}_{\Omg(t)}$ denotes the characteristic function of a connected bounded domain $\Omg(t) \subset \bbR^2$ and $k\in\bbR$ denotes the strength of vorticity. In this paper, we consider only a single patch in $\bbR^2$ with constant vorticity $k=2\pi$.

Due to the transport structure of the two-dimensional Euler equations, the vortex patch dynamics reduces to the evolution of its boundary. If the patch boundary is at least piecewise $C^1$, one can derive a one-dimensional equation describing the parametrization of the boundary $\gma : \bbT\times [0,T] \to \bbR^2$, known as the contour dynamics equation \cite{ZHR79, Maj86}: 
\begin{align} \label{eq:CDE} \tag{CDE}
    \begin{cases}
        \displaystyle & \rd_t \gma(\xi,t) = \int_\bbT \rd_\xi \gma(\eta,t) \log|\gma(\xi,t)-\gma(\eta,t)| \ud \eta \\
        & \gma(\xi,0) = \gma_0(\xi),
    \end{cases}
\end{align}

On the other hand, given any patch-type initial data $\omg_0 = \mathbbm{1}_{\Omg_0}$, the Yudovich theory \cite{Yud63} guarantees the existence and uniqueness of the patch-type global solution $\omg(t) = \mathbbm{1}_{\Omg(t)}$ for some bounded domain $\Omg(t) \subset \bbR^2$. However, it does not provide any information on the regularity of $\rd\Omg(t)$. The question of the boundary regularity propagation naturally leads to the well-posedness theory of \eqref{eq:CDE}.

Bertozzi \cite{Ber91} proved the local well-posedness of $C^{k,\alp}$ vortex patches by using the \eqref{eq:CDE}. Her proof is based on the potential theory estimates of the velocity operators 
\begin{align*}
    v[\gma](\xi) := \int_\bbT \rd_\xi \gma(\eta) \log|\gma(\xi)-\gma(\eta)| \ud \eta.
\end{align*}
Also, Bertozzi and Constantin \cite{BC93} and Chemin \cite{Che93} achieved the global well-posedness of the smooth vortex patches by using the two-dimensional dynamics of the Euler equations, without using \eqref{eq:CDE}. The question of the ill-posedness of $C^k$ vortex patches for $k\in \bbN$ remains open until Kiselev and Luo \cite{KL23} proved the instantaneous blow-up of $C^2$ vortex patches exploiting the equations for curvature of boundary and using the dispersive nature of velocity operator. However, the question on the instantaneous blow-up of $C^1$ vortex patches is still open.

This work aims to establish the local well-posedness of vortex patches in spaces that lie between $C^1$ and $C^{1,\alp}$. We consider spaces $C^{1,\vphi}$ defined via a modulus of continuity $\vphi$ that satisfies a set of structural assumptions. These assumptions identify a precise boundary for our method, extending the theory to a class of moduli strictly rougher than any Hölder moduli while retaining the minimal regularity required for the key analytic estimates.

\subsection{Main Results} \label{Subsec:Main Results}

The main challenge in extending the local well-posedness theory to $C^{1,\vphi}$ spaces lies in the failure of the classical potential theory estimates, which are central to the $C^{1,\alp}$ case. Indeed, these estimates were the cornerstone of Bertozzi's work in \cite{Ber91}, where they were used to establish the boundedness of the velocity operator. In general $C^{1,\vphi}$ spaces with moduli $\vphi$ strictly rougher than any Hölder moduli, this boundedness is lost, and the operator is no longer continuous.
 
The key to overcoming this difficulty lies in the structural properties of the velocity operator, particularly its underlying structure as a nonlinear variant of the Hilbert transform. Specifically, we establish that 
\begin{align*}
    v[\gma] \in C^{1,\vphi} \text{ provided that } \gma, \calH[\gma] \in C^{1,\vphi},
\end{align*}
where $\calH[\gma]: = (\calH[\gma_1],\calH[\gma_2])^t$. Motivated by this, we study the system of equations satisfied by the parametrization $\gma$ together with its Hilbert transform $\calH[\gma]$. That is, we consider the following system:
\begin{align} \label{eq:HCDE} \tag{HCDE}
    \begin{cases}
        & \rd_t \gma = v[\gma] \\
        & \rd_t \calH(\gma) = \calH[v[\gma]] \\
        & \gma_{t=0} = \gma_0 \\
        & \calH[\gma]_{t=0} = \calH[\gma_0].
    \end{cases}
\end{align}

We now introduce the assumptions on the modulus $\vphi$. We consider moduli $\vphi$ satisfying the following assumptions:
\begin{enumerate}[label=(\kern0.05em A\arabic*)] 
    \item\label{A1} There exists $0 < \tht < \frac{1}{2}$ such that 
    \begin{align*} 
        \int_0^r \frac{\vphi^\tht(r')}{r'} \ud r' \to 0 \quad \text{as } r \to 0.
    \end{align*}
    \item\label{A2} There exists $C > 0$ such that for any $\delta < 1$, 
    \begin{align*} 
        \sup_{r<C\delta} \frac{\vphi(\delta^{-1} r)}{\vphi(r)}\vphi(\delta) < \infty.
    \end{align*}
    \item\label{A3} It holds that 
    \begin{align*} 
        \vphi(r) \left( -\log r \right) \in C^{\tld{\vphi}},
    \end{align*}
    where 
    \begin{align} \label{eq:def_induced modulus}
        \tld{\vphi}(r) : = \int_0^{r} \frac{\vphi(r')}{r'} \ud r'.
    \end{align}
    From now on, we refer to $\tld{\vphi}$ as the induced modulus of $\vphi$.
\end{enumerate}
The class of moduli satisfying the assumptions includes moduli that are strictly rougher than the H\"older modulus. The prototypical examples are $\vphi(r)=(-\log r)^{-s}$, $s > 2$ with the induced modulus $\tld{\vphi}(r) = \frac{1}{s-1}(-\log r)^{s-1}$. 

Our main results establish the local well-posedness result of \eqref{eq:HCDE} in the spaces $C^{1,\vphi}$, where $\vphi$ satisfies the assumptions \ref{A1}--\ref{A3}.
\begin{theorem} \label{Thm1}
    Let $\vphi$ be a modulus of continuity satisfying assumptions \ref{A1}--\ref{A3} and $\gma_0, \calH[\gma_0] \in C^{1,\vphi}(\bbT)$. We further assume that $\gma$ is a non-degenerate parametrization:
    \begin{align*}
        |\gma_0|_\star = \inf_{\xi\neq \eta} \frac{|\gma_0(\xi)-\gma_0(\eta)|}{|\xi-\eta|} > 0.
    \end{align*} 
    Then there exists $T = T(\vphi, \nrm{\gma_0}_{C^{1,\vphi}}, \nrm{\calH[\gma_0]}_{C^{1,\vphi}},|\gma_0|_\star)$ and the unique solution $(\gma(t),\calH[\gma](t)) \in C^0([0,T];C^{1,\vphi}(\bbT))^2$ to \eqref{eq:HCDE}, satisfying $|\gma(t)|_\star>0$ for $t\in[0,T]$.
\end{theorem}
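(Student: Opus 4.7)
The plan is to run a Picard-type iteration in the product space $C^0([0,T]; C^{1,\vphi}(\bbT))^2$ for the pair $(\gma,\calH[\gma])$. Starting from $(\gma^{(0)},\calH[\gma^{(0)}])\equiv(\gma_0,\calH[\gma_0])$, I define
\begin{align*}
    \gma^{(n+1)}(t) = \gma_0 + \int_0^t v[\gma^{(n)}](s)\,\ud s, \qquad \calH[\gma^{(n+1)}](t) = \calH[\gma_0] + \int_0^t \calH[v[\gma^{(n)}]](s)\,\ud s.
\end{align*}
The crucial input, already flagged in the paper, is that $v[\gma]$ and $\calH[v[\gma]]$ can both be bounded in $C^{1,\vphi}$ by a quantity depending on $\nrm{\gma}_{C^{1,\vphi}}$, $\nrm{\calH[\gma]}_{C^{1,\vphi}}$, and $|\gma|_\star^{-1}$. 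This is precisely why the system \eqref{eq:HCDE}, rather than \eqref{eq:CDE} alone, is the right object: the Hilbert transform of $\gma$ is needed to close the velocity estimate in the critical space.

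First I would establish uniform a priori bounds. Setting $M_n(t):=\nrm{\gma^{(n)}(t)}_{C^{1,\vphi}}+\nrm{\calH[\gma^{(n)}](t)}_{C^{1,\vphi}}$, the velocity estimate gives $M_n(t)\le M_0+\int_0^t F\bb(M_{n-1}(s),|\gma^{(n-1)}(s)|_\star^{-1}\bb)\,\ud s$ for an increasing continuous $F$. Simultaneously I track $|\gma^{(n)}(t)|_\star$ from below by differentiating $|\gma^{(n)}(\xi)-\gma^{(n)}(\eta)|/|\xi-\eta|$ in time and exploiting the Lipschitz control of $v[\gma^{(n-1)}]$ afforded by the $C^1$ part of the norm. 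A standard continuity argument then yields $T=T(\vphi,M_0,|\gma_0|_\star)>0$ on which $M_n(t)\le 2M_0$ and $|\gma^{(n)}(t)|_\star\ge |\gma_0|_\star/2$ uniformly in $n$.

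Next I would prove contraction. Since $C^{1,\vphi}$ is critical for the velocity operator, I do not expect Lipschitz continuity of $v$ in $C^{1,\vphi}$ itself; instead I would contract in a weaker norm such as $C^1$ or $C^{1,\tld{\vphi}}$ with the induced modulus from \eqref{eq:def_induced modulus}. The target estimate, derived by decomposing $v[\gma_1]-v[\gma_2]$ along the lines used to prove $v[\gma]\in C^{1,\vphi}$, should take the form
\begin{align*}
    \nrm{v[\gma_1]-v[\gma_2]}_{\rm w}+\nrm{\calH[v[\gma_1]-v[\gma_2]]}_{\rm w}\le C\bb(\nrm{\gma_1-\gma_2}_{\rm w}+\nrm{\calH[\gma_1-\gma_2]}_{\rm w}\bb),
\end{align*}
with $C$ depending on the uniform $C^{1,\vphi}$ bound and $|\gma|_\star^{-1}$. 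Time integration and shrinking $T$ if needed gives a strict contraction, producing a limit in the weaker space. The uniform bound from Step 1 lifts the limit to $L^\infty([0,T];C^{1,\vphi})^2$ by lower semicontinuity; the integral equation then provides $C^0$-in-time regularity and the ODE form of \eqref{eq:HCDE}. Uniqueness follows from the same contraction applied to a difference of two solutions.

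The main obstacle will be Step 3: choosing the right weaker norm and verifying the contraction bound for $v[\gma_1]-v[\gma_2]$ and its Hilbert transform. The difference must be split into pieces where one factor carries the regularity of $\gma^{(i)}$ and $\calH[\gma^{(i)}]$ in $C^{1,\vphi}$, while the other factor is a difference measured in the weaker norm; each piece must be verified not to lose a critical logarithm. This is where assumption \ref{A2} (which behaves as a ``dilation-invariance'' on $\vphi$) and assumption \ref{A3} (which controls $\vphi(r)(-\log r)$ in the induced modulus) are expected to be essential, as they guarantee that the auxiliary estimates for $\calH$ and its variants survive the transition to the weaker norm.
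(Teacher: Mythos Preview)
Your overall Picard/fixed-point strategy matches the paper's, and your treatment of the uniform bounds and the arc-chord lower bound is in line with what the paper does (the open set $\bfO^M$ encodes exactly the constraints you track).

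The substantive divergence is in the contraction step. You write that you ``do not expect Lipschitz continuity of $v$ in $C^{1,\vphi}$ itself'' and therefore plan to contract in a weaker norm such as $C^1$ or $C^{1,\tld{\vphi}}$, then lift the limit by lower semicontinuity. This expectation is precisely what the paper refutes: Propositions~\ref{Prop:Lip.est of v} and~\ref{Prop:lip.est of Hv} establish
\[
\nrm{v[\gma^{(1)}]-v[\gma^{(2)}]}_{C^{1,\vphi}}+\nrm{\calH[v[\gma^{(1)}]]-\calH[v[\gma^{(2)}]]}_{C^{1,\vphi}}\ \aleq\ C(M)\,\nrm{(\gma^{(1)}-\gma^{(2)},\,\calH[\gma^{(1)}]-\calH[\gma^{(2)}])}_{C^{1,\vphi}},
\]
i.e.\ Lipschitz dependence \emph{in the critical norm itself}. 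The reason this is possible is that every estimate used to prove $v[\gma],\calH[v[\gma]]\in C^{1,\vphi}$ (the power-series expansion of the denominator, the decay bounds on $\calD^n$ and $\calH[\calD^n]$, the formulas for $\calH[\calT_j^{n_1,\dots,n_l}]$ in Lemmas~\ref{Lem:T}--\ref{Lem:T6}) is multilinear in $\zt$ and $\calH[\zt]$. A telescoping sum on the difference $v[\gma^{(1)}]-v[\gma^{(2)}]$ therefore produces exactly the same operators, with one factor replaced by $\Dlt[\zt]$ or $\calH[\Dlt[\zt]]$; the $C^\vphi$ bounds go through unchanged and pick up a single factor of $\rho$.

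The paper's route is both simpler and sharper: no auxiliary weak norm to choose, no semicontinuity step, and $C^0([0,T];C^{1,\vphi})$ regularity is immediate from the contraction. Your weak--strong scheme is not wrong in principle, but it is unnecessary here, and carrying it out would not actually be easier---the difference $\calH[v[\gma^{(1)}]]-\calH[v[\gma^{(2)}]]$ still requires the full $C^{1,\vphi}$ control on both $\gma^{(i)}$ and $\calH[\gma^{(i)}]$ to avoid losing a logarithm, so you would end up redoing the paper's Lipschitz estimates anyway.
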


As a simple consequence of theorem \ref{Thm1}, we establish the following local well-posedness of vortex patches in the critical spaces.
\begin{theorem} \label{Thm2} 
    Let $\psi_0$ be a modulus of continuity and let the initial vorticity be $\omg_0 = \mathbbm{1}_{\Omg_0}$ where $\Omg_0 \subset \bbR^2$ is a $C^{1,\psi_0}$ domain. We further assume that induced modulus $\psi:=\tld{\psi_0}$ defined as 
    \begin{align*} 
        \psi(r) = \int_0^r \frac{\psi_0(r')}{r'} \ud r',
    \end{align*}
    satisfies \ref{A1}--\ref{A3}. Then there exists $T = T(\Omg_0)$ and the unique patch solution $\omg(t) = \mathbbm{1}_{\Omg(t)}$ with initial data $\omg_0$ such that $\Omg(t)$ is a $C^{1,\psi}$ domain for $t \in [0,T]$.
\end{theorem}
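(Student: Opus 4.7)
The plan is to deduce Theorem~\ref{Thm2} from Theorem~\ref{Thm1} via the standard contour-dynamics reduction, the nontrivial content being to check that any $C^{1,\psi_0}$ initial domain can be lifted to admissible initial data $(\gma_0, \calH[\gma_0])$ for \eqref{eq:HCDE} in the regularity $C^{1,\psi}$.

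\textbf{Step 1 (Parametrize).} Since $\Omg_0$ is a $C^{1,\psi_0}$ domain, choose a non-degenerate parametrization $\gma_0 \in C^{1,\psi_0}(\bbT)$ of $\rd \Omg_0$, for instance an arc-length parametrization rescaled to $\bbT$, so that $|\gma_0|_\star>0$.

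\textbf{Step 2 (Lift to \eqref{eq:HCDE}-data).} The heart of the reduction is the gain-of-modulus estimate for the Hilbert transform on the circle: a function whose modulus of continuity is $\psi_0$ has Hilbert transform whose modulus of continuity is $\tld{\psi_0}=\psi$. Applying this to $\gma_0'\in C^{0,\psi_0}(\bbT)$ componentwise gives $(\calH[\gma_0])'=\calH[\gma_0']\in C^{0,\psi}(\bbT)$, hence $\calH[\gma_0]\in C^{1,\psi}(\bbT)$. Moreover $\psi_0(r)/\psi(r) = \psi_0(r)/\tld\psi_0(r)\to 0$ as $r\to 0^+$ (by monotonicity of $\psi_0$ one has $\psi_0(r)\le \tld\psi_0(r)$), so the embedding $C^{1,\psi_0}\hookrightarrow C^{1,\psi}$ yields $\gma_0\in C^{1,\psi}(\bbT)$ as well. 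Thus $(\gma_0,\calH[\gma_0])\in C^{1,\psi}(\bbT)^2$ with $|\gma_0|_\star>0$.

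\textbf{Step 3 (Apply Theorem~\ref{Thm1}).} By hypothesis $\psi$ satisfies \ref{A1}--\ref{A3}, so Theorem~\ref{Thm1} produces $T=T(\Omg_0)>0$ and a unique solution $(\gma(t),\calH[\gma](t))\in C^0([0,T];C^{1,\psi}(\bbT))^2$ to \eqref{eq:HCDE} with $|\gma(t)|_\star>0$ on $[0,T]$. Non-degeneracy guarantees $\gma(t,\cdot)$ is a simple closed $C^{1,\psi}$ curve bounding a unique $C^{1,\psi}$ domain $\Omg(t)$; set $\omg(t):=\mathbbm{1}_{\Omg(t)}$.

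\textbf{Step 4 (Identify with the patch solution and uniqueness).} Since $\gma$ satisfies the first line of \eqref{eq:HCDE}, which is precisely \eqref{eq:CDE}, the classical reversal of the Majda--Zabusky--Hald--Roberts derivation, valid once $\gma(t)\in C^1$, shows that $\omg(t)=\mathbbm{1}_{\Omg(t)}$ is a weak solution of \eqref{eq:EE} with initial datum $\mathbbm{1}_{\Omg_0}$; Yudovich's theorem then identifies it with the unique bounded-vorticity solution launched from $\omg_0$. Uniqueness of $\Omg(t)$ as a $C^{1,\psi}$ domain is inherited from the uniqueness in Theorem~\ref{Thm1}, noting that reparametrizations of the boundary are transported consistently by the velocity $v[\gma]$.

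\textbf{Main obstacle.} All the genuinely hard analysis has been absorbed into Theorem~\ref{Thm1} and into the Hilbert-transform gain-of-modulus estimate used in Step~2; the latter is nontrivial for the rough (sub-H\"older) moduli treated here and must be invoked in the form developed earlier in the paper for critical spaces. Once these tools are in hand, the only point that requires care is verifying in Step~4 that no artificial self-intersection or reparametrization issue breaks the identification of $\mathbbm{1}_{\Omg(t)}$ with the Yudovich solution; this is handled by the uniform positivity $|\gma(t)|_\star>0$ provided by Theorem~\ref{Thm1}.
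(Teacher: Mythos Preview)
Your proposal is correct and follows essentially the same route as the paper: parametrize $\partial\Omega_0$ by a non-degenerate $\gma_0\in C^{1,\psi_0}$, invoke the gain-of-modulus estimate (Lemma~\ref{Lem:img of H}) to place $\calH[\gma_0]$ in $C^{1,\psi}$, apply Theorem~\ref{Thm1}, and read off the $C^{1,\psi}$ patch from the resulting solution of \eqref{eq:CDE}. The paper's own proof is in fact terser on your Step~4 (it does not spell out the Yudovich identification or the reparametrization point), so your write-up is, if anything, slightly more complete; the only superfluous claim is that $\psi_0/\psi\to 0$, which is neither justified by ``monotonicity of $\psi_0$'' alone nor needed, since the embedding $C^{1,\psi_0}\hookrightarrow C^{1,\psi}$ already follows from $\psi_0\le\psi$.
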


\begin{remark} [Remarks on the Main Theorem]
    \phantom{=}
    \begin{enumerate}
        \item Note that the parametrization $\gma$ can lose its regularity instantaneously, while it persists within the lower bound of the regularity in short time. On the other hand, the Theorem \ref{Thm2} ensures that there is no instantaneous $C^1$ blow-up for initial $C^{1,\vphi}$ patches. For example, for an initial data $\Omg_0$ which is $C^{1,(-\log \cdot)^s}$ domain where $s>3$, $\Omg(t)$ remains $C^{1,(-\log \cdot)^{s-1}}$ domain for $t \in [0,T]$ for some $T>0$.
        
        \item The key difference between our results and $C^{1,\alpha}$ well-posedness is that the singular integral operators are, in general, not bounded on $C^{1,\vphi}$. Thus the regularity assumption on $\calH[\gma]$ is essential for obtaining the estimates on the velocity operator. This approach additionally requires estimates on the Hilbert transform of the velocity operator, for which obtaining an exact formula is essential. These issues are discussed in detail in Section \ref{Subsec:Ideas} and in the proof of Proposition \ref{Prop:est of Hv}.
        
        \item The problem of global well-posedness faces several obstacles, which we discuss in Section \ref{Sec:Discussions}.
    \end{enumerate} 
\end{remark}

\begin{remark} [Remarks on the assumptions on moduli]
    \phantom{=}
    \begin{enumerate}
        \item We refer to $\tld{\vphi}$ as the induced modulus of $\vphi$. Generally, the singular integral operators and also the velocity operator $v[\gma]$ are unbounded in spaces $C^{1,\vphi}$ with general modulus $\vphi$. Nevertheless, if $\vphi$ is Dini-continuous, we can still ensure the weaker boundedness of Hilbert transform:
        \begin{align*}
            \calH[C_c^\vphi] \subset C^{\tld{\vphi}}.
        \end{align*}
        Note that $\tld{\vphi}$ is in general strictly rougher than $\vphi$. Thereby motivating the introduction of the induced modulus $\tld{\vphi}$ - see Lemma \ref{Lem:img of H}.

        \item The condition \ref{A1} ensures that functions in $C^\vphi$ are Dini-continuous. It also allows us to use an interpolation inequality. For a function $f \in C^\vphi$, we have 
        \begin{align} \label{eq:interpolation}
            \nrm{f}_{C^{\vphi^\tht}} \leq \nrm{f}_{C^0}^{1-\tht} \nrm{f}_{C^\vphi}^\tht.
        \end{align}

        \item The condition \ref{A2} is required to ensure an appropriate scaling property of the space $C^\vphi$.
        
        \item The condition \ref{A3} is used to control the boundary terms that arise when performing integration by parts in the Hilbert transform. 
        
    \end{enumerate}
\end{remark}

\subsection{Historical Backgrounds} \label{Subsec:Backgrounds}
\noindent\textbf{$\bullet$ Regularity of Vortex Patches}

The vortex patch problem for the 2D Euler equations is a fundamental and extensively studied problem with a rich literature. As we mentioned before, Bertozzi proved the local well-posedness of \eqref{eq:CDE} in the space $C^{k,\alpha}$. In \cite{Che93} and \cite{BC93}, Chemin, Bertozzi and Constantin proved that an initially smooth domain remains smooth globally. The case of $C^k$ remained open until Kiselev and Luo proved the ill-posedness of vortex patches in $C^k$ for $k \geq 2$. However, it is still unknown whether the vortex patch problem is well-posed in $C^1$. 

\noindent\textbf{$\bullet$ Dynamics of Vortex Patch Evolution}

In \cite{BH10}, Biello and Hunter exhibited that the evolution of the interfaces between two half-spaces with constant vorticities in $\bbR^2$ coincides with the asymptotic equation of Burgers-Hilbert equation. Also, in \cite{KL23} Kiselev and Luo showed that the evolution of curvature is governed by the Hilbert transform with smooth perturbation terms.

In recent years, there has been substantial progress on the existence and bifurcation theory of V-state - uniformly rotating vortex patches - see \cite{DZ78} \cite{Bur82} \cite{HMV13}. These results suggest that the evolution of the patch boundary exhibits a dispersive nature. Altogether, these findings support that the velocity operator is closely related to the Hilbert transform.

\noindent\textbf{$\bullet$ Finite time Singularity Formation}

One may ask the finite time singularity formation for $C^{1,\vphi}$ boundary. One can construct an initial parametrization $\gma_0$ such that its solution to \eqref{eq:CDE} instantaneously leaves $C^{1,\vphi}$. However, the key difficulty in the ill-posedness problem of vortex patches lies in the fact that it is possible for the curve to be $C^{1,\vphi}$ even if its parametrization is not. 

It is still unknown whether the vortex patch problem is well-posed in $C^1$. Since there is a parametrization issue in the $C^1$ class, one may consider the 2D dynamics of the Euler equations. For an initially $C^1$ curve, the cusp formation is considered as a strong candidate for blow-up.

Let us examine the cusp formation scenario. When the initial curve has a corner, its evolution has been numerically studied in \cite{CD00}, \cite{CS99}, and \cite{CS00}. All three works suggest that an acute corner tends to form a cusp, whereas an obtuse corner tends to flatten out. Moreover, Elgindi and Jeong \cite{EJ23} rigorously proved these predictions rigorously by exploiting n-fold symmetry to fix the moving axis, and more recently Elgindi and Jo \cite{EJ25} established the formation of cusps from acute corners even in a moving frame. Given these results, it appears heuristically difficult for an initially smooth curve to develop a cusp in finite time. Intuitively, during the process of cusp formation, the curve must transiently resemble a corner. Assuming the evolution is continuous, a smooth curve would first form an obtuse corner, which would then flatten rather than progress towards a cusp.

\noindent\textbf{$\bullet$ The 2D Euler equations in critical spaces}

There is a substantial body of literature concerning the propagation of regularity for the 2D Euler equations in critical spaces. In \cite{Koc02}, Koch proved that if the initial vorticity is Dini continuous, then its regularity propagates. Note that the function spaces $C^{\vphi}$ consist of Dini-continuous functions. 

In \cite{CJ23}, Chae and Jeong proved that the propagation of regularity of vorticity in $(-\log r)^{-s}$ with $0<s<1$ under the assumption that the velocity field is Lipschitz. Recently in \cite{Kha24} Kharim proved the same regularity propagation results without the assumption on velocity field. In addition, he constructed the family of moduli, which alternates between the two moduli $\log(-\log r)(-\log r)^{-s}$ and $(-\log r)^{-s}$ for $0<s<1$, such that loses its regularity in finite time. As in Theorem \ref{Thm1}, our well-posedness class of \eqref{eq:CDE} include the space $C^{1,\vphi}$ with $\vphi(r) = (-\log r)^s$ for $s < -2$. Our result is similar to the results of Koch and Kharim.

However, there are a few different features between the 2D Euler equations and vortex patch evolution. The Euler equations have the transport structure. Hence the regularity propagation is closely related to the regularity of velocity field - see \cite{Yud63}. Consequently, the local well-posedness issue reduces to the analysis on the Biot-Savart kernel. In the ill-posed problem, the hyperbolic scenario can be useful. On the other hand, the contour dynamics equation itself has a Hilbert-transform-type operator, which can exhibit the dispersive properties. It is interesting that the Euler equations - origin of contour dynamics equation - have transport structure, whereas the contour dynamics equation does not.

\noindent\textbf{$\bullet$ Sadovskii Vortex Patch}

The Sadovskii vortex patch represents a patch-type traveling wave solution to the two-dimensional Euler equations, characterized by odd symmetry and touching the symmetry axis. Its existence was first observed by Sadovskii in \cite{SAD71} through numerical simulations.

In 2024, Huang and Tong \cite{HT24}, and Choi, Jeong and Sim \cite{CJS24}, independently proved the existence of Sadovskii patch using the different methods. It is still unknown whether the patches in \cite{HT24} and \cite{SAD71} are the same, and the uniqueness of the Sadovskii patch also remains open. Interestingly, the Sadovskii patch constructed by Huang and Tong contains a right-angle corner with the modulus $(-\log r)^{-1}$, which is critical in terms of the boundedness of singular integral operators.

\noindent\textbf{$\bullet$ Free boundary problem}

Recently, there has been substantial research on the free boundary problem in critical regularity spaces. In \cite{GP21}, Gancedo and Patel proved the local well-posedness of $\alpha$-SQG patches in the space $H^2(\bbT)$ for $0<\alpha<1/3$. They also presented the finite-time singularity formation in the space $H^2$ and blow-up criterion regarding the arc-chord for the same $\alpha$. 

The Muskat problem \cite{Mus34} describes the dynamics of interface between two fluids in porous media. In \cite{CL18}, Cordoba and Lazar proved the global well-posedness for the 2D Muskat problem in $H^{\frac{3}{2}}$. In \cite{GL20}, Gancedo and Lazar proved the global well-posedness for the 3D Muskat problem in $\dot{H}^2 \cap \dot{W}^{1,\infty}$. 

\subsection{Ideas of Proof} \label{Subsec:Ideas}

Our proof is based on sharp estimates of the velocity operator and its Hilbert transform in critical spaces. The velocity operator is closely related to the Hilbert transform, which is one of the simplest singular integral operator - see \cite{BH10}, \cite{KL23}. We begin by considering the properties of Hilbert transform, which serves as a toy model for the velocity operator.

\noindent\textbf{$\bullet$ Hilbert transform on the critical spaces}

It is well-known that the Hilbert transform is bounded on $\dot{C}^{k,\alp}$ for $k \in \bbN, 0<\alp<1$ and unbounded on $\dot{C}^k$. In spaces $C^{k,\vphi}$ with moduli $\vphi$ strictly rougher than any H\"older moduli $r^\alp$, the Hilbert transform is generally unbounded and it is bounded if and only if $\vphi \in C^\alp$ for some $0<\alp<1$. More generally, in Lemma \ref{Lem:img of H}, we establish that for a Dini-continuous modulus $\vphi$, the Hilbert transform of a $C^{k,\vphi}$ function belongs at least to $C^{k,\tld{\vphi}}$ where $\tld{\vphi}$ is the induced modulus of $\vphi$ defined in \eqref{eq:def_induced modulus}: 
\begin{align*}
    \tld{\vphi}(r) = \int_0^r \frac{\vphi(x)}{x} \ud x.
\end{align*}
Note that $\tld{\vphi}$ is strictly rougher than $\vphi$ if $\vphi \notin C^\alpha$. Therefore, evolution equations involving the Hilbert transform are generally ill-posed in spaces where the Hilbert transform is unbounded. For example, in \cite{EM20}, Elgindi and Masmoudi proved the ill-posedness of such equations in $L^\infty$ based spaces. However, we introduce a method to overcome this loss of regularity.

\noindent\textbf{$\bullet$ Evolution Equations involving the Hilbert transform}

We now focus on the local well-posedness problem of the following equations in $C^\vphi$:
\begin{align*}
    \rd_t f = \calF(f,\calH[f]),
\end{align*}
where $\calF$ is a smooth function. In the H\"older spaces $C^\alp$, thanks to the continuity of the Hilbert transform, the forcing term on the right-hand side belongs to $C^\alp$ as well. Hence standard techniques yield local well-posedness in these spaces $C^\alp$. 

However, the situation changes drastically in $C^\vphi$, where the Hilbert transform is no longer continuous. For instance, one may attempt to prove local well-posedness in $C^\vphi$ via an iterative scheme. Given initial data $f_0 \in C^\vphi$, one constructs a sequence $f_n$ via the standard iteration scheme:
\begin{align*}
    \rd_t f_n = \calF(f_n,\calH[f_{n-1}]).
\end{align*}
However, since $\calH$ is not bounded on $C^\vphi$, we cannot prevent a loss of regularity. That is, we cannot guarantee that $f_n \in C^\vphi$ for any $n > 1$. 

More precisely, what we can ensure is only that $f_n \in \calH^n[C^\vphi]$, a progressively weaker regularity space. One may then attempt to work in the union of spaces $\cup_{n=1}^\infty \calH^n[C^\vphi]$. However, this space is generally not well-defined. For instance, if the initial modulus is $\vphi(r) = (-\log r)^{-s}$ with $s>1$, the $n$-th induced modulus is given by $(-\log r)^{s-n}$ for $s-n>1$. Moreover, for sufficiently large $n$, the image $\calH^n[C^\vphi]$ is not contained in $L^\infty$, which implies that $f_n$ no longer has the meaning of a strong solution. 

To overcome this loss of regularity, one must exploit specific structural properties of the function $\calF$. As an example, we present a method to prove the local well-posedness of the transport-Hilbert equation: 
\begin{align} \tag{TH} \label{eq:TH}
    u_t + cu_x = \calH[u].
\end{align}
This equation can be viewed as the linearization of the Burgers-Hilbert equation with constant speed $c$, which is known to be an approximate model for vortex patch dynamics - \cite{MW83}, \cite{BH10}, \cite{HM22}. Note that the forcing term is linear in $\calH[u]$, and this linear structure allows us to exploit the identity $\calH^2 = -id$. Applying the Hilbert transform to \eqref{eq:TH}, we obtain the following equivalent linear system for $(u,\calH[u])$: 
\begin{align} \tag{HTH} \label{eq:HTH}
    \begin{cases}
        & u_t + c u_x = \calH[u] \\
        & \calH[u]_t + c\calH[u]_x = -u.
    \end{cases}
\end{align}
Using this reformulation, one can establish the local well-posedness of the system \eqref{eq:HTH} in the space $(u,\calH[u])\in (C^{1,\vphi})^2$. 


However, this method cannot be directly applied to our case \eqref{eq:CDE}, whose forcing term is a highly nonlinear variant of the Hilbert transform, as we will demonstrate in the following section. 

\noindent\textbf{$\bullet$ Decomposition on velocity operator}

We now prove Theorem \ref{Thm1}. The proof relies on a fixed-point argument, which follows from key $C^{1,\vphi}$ estimates for the velocity operator $v$ and its Hilbert transform $\calH[v]$. These estimates are obtained by exploiting the structural properties $v$ and $\calH[v]$, which in turn overcome the preceding nonlinear difficulty. Our analysis begins with the velocity derivative
\begin{align*}
    \rd_\xi v[\gma](\xi) = P.V.\int_\bbT \frac{(\gma(\xi)-\gma(\eta))\cdot\gma(\xi)}{|\gma(\xi)-\gma(\eta)|^2} \rd_\xi \gma(\eta) \ud \eta.
\end{align*} 
The primary difficulty lies in its kernel, $\frac{(\gma(\xi)-\gma(\eta))\cdot\gma(\xi)}{|\gma(\xi)-\gma(\eta)|^2}$. In this form, the nature of its singularity is obscure and not immediately comparable to the classic Hilbert kernel $\frac{1}{2}\cot(\frac{\xi-\eta}{2})$.

The main goal of this part is to precisely decompose this kernel to expose its fundamental structure. To reveal the Hilbert kernel $1/(\xi-\eta)$, we can formally factor it out:
\begin{align*}
    \frac{(\gma(\xi)-\gma(\eta))\cdot\gma(\xi)}{|\gma(\xi)-\gma(\eta)|^2} = \frac{1}{\xi-\eta} \frac{\calQ[\gma](\xi,\eta)\cdot \gma(\xi)}{|\calQ[\gma](\xi,\eta)|^2},
\end{align*}
where $\calQ[\gma](\xi,\eta) := \frac{\gma(\xi)-\gma(\eta)}{\xi-\eta}$ denotes the difference quotient of $\gma$. The challenge is now the term $|\calQ[\gma]|^{-2}$. A direct analysis is difficult because the Hilbert transform of reciprocal of a function is generally hard to handle. 

Our strategy is to localize $\gma$ into a linear part $(U \xi)$, for $U = \rd_\xi \gma(\xi)$, and a small nonlinear remainder $\zt$. That is, we consider the vector-valued function $\zt = (\gma-U\xi)\chi$, where $\chi$ is a compactly supported bump function supported on $[-\dlt,\dlt]$. This procedure is designed to make the denominator amenable to a power series expansion. We implement this by assuming $\dlt$ small enough. Then this leads to the following decomposition:
\begin{align*}
    & |\calQ[\gma](\xi,\eta)|^{-2} = |U+\calQ[\zt](\xi,\eta)|^{-2} = |U+\calQ[\zt](\xi,\eta)|^{-2} = |U|^{-2} \sum_{n=0}^\infty (-1)^n\calD^n(\xi,\eta) \\
    & \calD(\xi,\eta) := \frac{2}{|U|}(U\cdot\calQ[\zt](\xi,\eta)) + \frac{1}{|U|^2} |\calQ[\zt](\xi,\eta)|^2,
\end{align*}
where $\calD$ is a quadratic polynomial in $\calQ[\zt]$. Substituting this back, we finally reveal the structure of the kernel: it is an infinite series of terms, each containing the Hilbert kernel $1/(\xi-\eta)$ multiplied by high powers of the difference quotient $\calQ[\zt](\xi,\eta)$. 

Combining the above arguments and after carefully localizing, the estimates of $\rd_\xi v[\gma]$ reduces to dealing with the following operators
\begin{align*}
    S[\zt](\xi) := P.V. \int \sum_{n=0}^\infty \frac{\rd_\xi \zt(\eta)}{\xi-\eta}\calD^n(\xi,\eta) \ud\eta.
\end{align*}

\noindent\textbf{$\bullet$ $C^{1,\vphi}$ Estimates on $v[\gma]$}

We now proceed to obtain a $C^{1,\vphi}$ estimate on $v[\gma]$ under the assumption that both $\gma$ and $\calH[\gma]$ belong to the space $C^{1,\vphi}$. The analysis of the $\rd_\xi v[\gma]$ is central to this proof. The full expression for $\rd_\xi v[\gma]$ consists of a series of complex operators. For the sake of brevity and clarity, we will present the detailed estimate only for the key operator $S[\zt]$, defined as follows. The other terms can be handled by similar arguments. \footnote{The treatment of the difference quotient on the torus is somewhat delicate due to the singular factor $1/(x-y)$. To avoid this difficulty, we extend the functions $\zt$ and $\chi$ from $\bbT$ to the whole space $\bbR$. This procedure is needed for notational and calculational convenience. One may prove the whole procedure regarding $\zt$ as a function on $\bbT$.}
\begin{align*}
    S[\zt](\xi) := P.V.\int_\bbR \sum_{n=0}^\infty \frac{\rd_\xi \zt(\eta)}{\xi-\eta} \calD^n(\xi,\eta) \ud \eta,
\end{align*}
We establish the estimates on $S[\zt]$ by establishing the bounds of the $C^\vphi$ norm of its summand
\begin{align*}
    S_n[\zt](\xi) := P.V.\int_\bbR \frac{\rd_\xi \zt(\eta)}{\xi-\eta} \calD^n(\xi,\eta) \ud \eta,
\end{align*}
and its summability. A simple calculation yields the following decomposition of $S_n$:
\begin{align*}
    S_n[\zt] (\xi)
    & = \rd_\xi \zt(\xi) \calH_2[\calD^n](\xi,\xi) + \calH[\rd_\xi \zt](\xi) \calD^n(\xi,\xi) \\
    & \quad + \int_\bbR \frac{(\rd_\xi \zt(\xi) - \rd_\xi \zt(\eta))(\calD^n(\xi,\xi)-\calD^n(\xi,\eta))}{\xi-\eta} \ud\eta,
\end{align*}
where $\calH_2$ denote the Hilbert transform acting on the second argument. From now on, we estimate each term.

We first examine the norm of $\zt$. Since $\vphi$ is a general modulus, the scaling property of $C^{1,\vphi}$ is not clear, which is required to control the norms of $\zt$ with respect to localizing spatial scale $\dlt$. Thus we introduce the assumptions \ref{A2}, which ensures the following scaling property (see \eqref{eq:scaling of C^1,vphi}):
\begin{align*}
    \nrm{f(\cdot/\dlt)}_{\dot{C}^{1,\vphi}} \aleq \frac{1}{\dlt\vphi(\dlt)}\nrm{f}_{\dot{C}^{1,\vphi}}.
\end{align*} 
Consequently, we obtain the following bounds for $\zt$:
\begin{align*}
    \nrm{\zt}_{C^1} \aleq \nrm{\gma}_{C^1} \vphi(\dlt) ,\quad \nrm{\zt}_{C^{1,\vphi}} \aleq \nrm{\gma}_{C^{1,\vphi}}.
\end{align*}

We now consider the norms of $\calH[\zt] = \calH[\gma \cdot \chi]$. Since we deal with $\calH[\gma \cdot \chi]$ instead of $\calH[\gma]\cdot\chi$, we cannot use the above scaling property directly, and thus we need to exploit the multiplication property of the Hilbert transform on the space $C^{1,\vphi}$. Then one might try to use the well-known Cotlar identity:
\begin{align} \label{eq:Cotlar}
    \calH[fg] = \calH[f]g + f\calH[g] + \calH[\calH[f]\calH[g]].
\end{align}
In the space where the Hilbert transform is continuous, this identity is highly useful: the regularity of $\calH[fg]$ can be deduced from the regularity of $f$ and $g$. However, in the space where $\calH$ is not continuous, we cannot directly apply this argument. Even though we assume the additional condition on $\calH[f]$ and $\calH[g]$, we cannot prevent the loss of regularity due to the last term $\calH[\calH[f]\calH[g]]$.

Nevertheless, we refine the Cotlar identity to the following identity:
\begin{align} \label{eq:Hfg formula}
    \calH[fg](x) = \frac{1}{\pi}P.V.\,\int_\bbR \frac{(\calH[f](x)-\calH[f](y))(\calH[g](x)-\calH[g](y))}{x-y} \ud y.
\end{align} 
This identity allows one to extract a decay of order $\vphi^2$ from the differences of $f$ and $g$, therefore preventing the regularity loss under the Hilbert transform. A detailed proof is given in Lemma \ref{Lem:Hfg}. Also, a potential theory estimate of the above formula together with the interpolation inequality \eqref{eq:interpolation} yields \footnote{In this estimate, the assumptions \ref{A1} and the interpolation inequality \eqref{eq:interpolation} are crucial. Without it, we can only obtain the bound:
\begin{align*}
    \nrm{\calH[fg]}_{C^\vphi} \aleq \nrm{\calH[f]}_{C^\vphi} \nrm{\calH[g]}_{C^\vphi}.
\end{align*}
This weaker inequality is insufficient to establish other key estimates, such as the decay estimate for $\nrm{\calH[\calD^n]}_{C^\vphi}$.
} 
\begin{align} \label{eq:Hfg nrm}
    \nrm{\calH[fg]}_{C^\vphi} \aleq \nrm{\calH[f]}_{C^\vphi} ( \nrm{\calH[g]}_{C^\vphi}^\tht \nrm{\calH[g]}_{C^0}^{1-\tht} + \nrm{\calH[g]}_{C^0} ) +  \nrm{\calH[g]}_{C^\vphi} (\nrm{f}_{C^0} + \nrm{\calH[f]}_{C^0}).
\end{align}
By using this norm estimates, we obtain the following bounds on $\calH[\zt]$:\footnote{We remark that the apparent divergence of the prefactor $\vphi(\dlt)^{\tht-1}$ of $\nrm{\calH[\zt]}_{C^{1,\vphi}}$ as $\dlt \to 0$ is not an issue in our argument. Since the proof only requires satisfying a finite set of smallness conditions on $\dlt$, we can fix a single positive $\dlt=\dlt(M)$ throughout the analysis. Consequently, this prefactor becomes a constant.}
\begin{align*}
    \nrm{\calH[\zt]}_{C^1} \aleq \vphi(\dlt)^{1-\tht} \nrm{\calH[\gma]}_{C^{1,\vphi}},\quad \nrm{\calH[\zt]}_{C^{1,\vphi}} \aleq \vphi(\dlt)^{-\tht} \nrm{\calH[\gma]}_{C^{1,\vphi}}. 
\end{align*}

We now examine the $C^{\vphi}$ bounds for $\calD^n$ and $\calH[\calD^n]$. To ensure summability, we must extract a sufficient rate of decay from these bounds. The summability of $\nrm{\calD^n}_{C^\vphi}$ follows from the algebraic properties of $C^{\vphi}$, provided that $\dlt$ is chosen small enough to satisfy $\nrm{\calD}_{C^0} < 1$. Furthermore, an inductive application of \eqref{eq:Hfg nrm} to $\calH[\calD^{n-1} \calD]$ yields
\begin{align*}
    \nrm{\calH[\calD^n]}_{C^\vphi} \aleq \vphi(\dlt)^{\tht-1} (M\alp_1)^n,
\end{align*}
where $M$ is a constant depends on the norms of $\zt$ and $\calH[\zt]$ and $\alp_1 = C \vphi(\dlt)^{1-2\tht}$. Note that to ensure the summability of $\nrm{\calH[\calD^n]}_{C^\vphi}$, we need the restriction on the range of $\tht$: $0<\tht<\frac{1}{2}$.

Finally, we need to obtain the estimates on the remainder term:
\begin{align*}
    \int_\bbR \frac{(\rd_\xi \zt(\xi) - \rd_\xi \zt(\eta))(\calD^n(\xi,\xi)-\calD^n(\xi,\eta))}{\xi-\eta} \ud\eta.
\end{align*}
Observe that it has a similar structure to the right hand side of \eqref{eq:Hfg formula}. That is, we can extract a decay of order $\vphi^2$ from the differences of $\rd_\xi \zt$ and $\calD^n$. From this, we obtain that 
\begin{align*}
    \left\Vert\int_\bbR \frac{(\rd_\xi \zt(\xi) - \rd_\xi \zt(\eta))(\calD^n(\xi,\xi)-\calD^n(\xi,\eta))}{\xi-\eta} \ud\eta \right\Vert_{C^\vphi} \aleq \nrm{\rd_\xi \zt}_{C^\vphi} \nrm{\calD^n}_{C^\vphi}.
\end{align*}

Combining all these estimates, we conclude that
\begin{align*}
    P.V. \int_\bbR \sum_{n=0}^\infty \frac{\rd_\xi \zt(\eta)}{\xi-\eta} \calD^n(\xi,\eta) \ud \eta = \sum_{n=0}^\infty P.V. \int_\bbR \frac{\rd_\xi \zt(\eta)}{\xi-\eta} \calD^n(\xi,\eta) \ud \eta \in C^{\vphi}.
\end{align*}

\noindent\textbf{$\bullet$ $C^{1,\vphi}$ Estimates on $\calH[v[\gma]]$}

Now we present the way to establish the $C^{1,\vphi}$ bound of $\calH[v[\gma]]$. Since the regular terms are at least $C^{1,\alp}$ with enough decay, they are at least $C^{1,\alp}$ for all $0<\alp<1$. Therefore we focus on the singular term $\calV[\zt]$. For simplicity, we introduce the way to estimate $\calH[S]$. In the above step, we establish that 
\begin{align*}
    \calH[S[\zt]] = \sum_{n=0}^\infty (-1)^n S_{n}[\zt].
\end{align*}
Due to the linearity of Hilbert transform, it suffices to show that the $C^{\vphi}$ norms of $\calH[S_1]$ are summable. Recall that 
\begin{align*}
    \calH[S_n](\xi) = P.V.\, \int_\bbR \frac{1}{\xi-\eta} P.V.\, \int_\bbR \frac{\rd_\xi \zt_1(\tht)}{\eta-\tht} \calD^n (\eta,\tht) \ud \tht \ud \eta.
\end{align*}
To the best of our knowledge, there is no way to directly obtain the $C^{1,\vphi}$ bound rather than a $C^{1,\tld{\vphi}}$ bound of $\calH[S_n]$. Hence we proceed to obtain the additional structural properties. After expanding $\calD^n$, we observe that $\calH[S_{1,n}]$ is a sum of the operators of $\calT_j^{n_1,n_2}[\zt]$, which is defined as 
\begin{align*}
    \calT_0^{n_1,n_2}[\zt](\xi) := P.V.\, \int_\bbR \frac{1}{\xi-\eta} \calQ[\zt_1]^{n_1} (\xi,\eta) \calQ[\zt_2]^{n_2} (\xi,\eta) \ud \eta, \\
    \calT_j^{n_1,n_2}[\zt](\xi) := P.V.\, \int_\bbR \frac{\rd_\xi \zt_j(\eta)}{\xi-\eta} \calQ[\zt_1]^{n_1} (\xi,\eta) \calQ[\zt_2]^{n_2} (\xi,\eta) \ud \eta.
\end{align*}
In contrast to $\calH[S_n]$, $\calH[\calT_0^{n_1,n_2}[\zt]]$ exhibits an additional structure. That is, we can find the exact formula of it by using the the algebraic structure of difference quotients 
\begin{align} \label{eq:usefulformula2}
    \frac{\calQ[\zt_1]^{n_1} \calQ[\zt_2]^{n_2} (\xi,\eta)}{\xi-\eta}  
    & = \frac{n_1}{n} \frac{\rd_\xi \zt_1(\xi)}{\xi-\eta} 
    + \frac{n_2}{n} \frac{\rd_\xi \zt_2(\xi)}{\xi-\eta}  
    + \frac{1}{n} \frac{\ud}{\ud \xi} \left[ \calQ[\zt_1]^{n_1}\calQ[\zt_2]^{n_2} \right] (\xi,\eta),
\end{align}
for $n=n_1+n_2$. By applying this formula to $\calH[\calT_j^{n_1,n_2}[\zt]]$ and integrating by parts, we establish the following formula:
\begin{align} \label{eq:T formula}
    \begin{split}
        \calH[\calT_j^{n_1,n_2}[\zt]] 
        & = \frac{n_1}{n} \calH[\rd_\xi \zt_1 \calT_1^{n_1-1,n_2}[\zt]] 
            + \frac{n_2}{n} \calH[\rd_\xi \zt_2 \calT_2^{n_1,n_2-1}[\zt]] \\
        & \quad - 2\rd_\xi \zt_j \left[ \frac{n_1}{n} \calH[\calT_1^{n_1-1,n_2}[\zt]] + \frac{n_2}{n} \calH[\calT_2^{n_1,n_2-1}[\zt]] \right]+ \calH[\rd_\xi \zt_j] \calT_0^{n_1,n_2}[\zt] \\
        & \quad + J_{\mathrm{rem}},
    \end{split}
\end{align}
where the remainder term $J_{\mathrm{rem}}$ is given by
    \begin{align*}
        J_{\mathrm{rem}}(\xi) & := \frac{1}{\pi} P.V.\int_\bbR \frac{\rd_\xi \zt_j(\xi) - \rd_\xi \zt_j(\tht)}{\xi-\tht} (\calN(\xi,\xi) - \calN(\xi,\tht)) \ud\tht, \\
        \calN(\xi,\tht) & := \calH_1[\calQ[\zt_1]^{n_1}\calQ[\zt_2]^{n_2}](\xi,\tht),
    \end{align*}
which allow us to calculate its norm inductively in $n$. Applying our various estimates on \eqref{eq:T formula} yields 
\begin{align*} 
    \nrm{\calH[\calT_j^{n_1,n_2}[\zt]]}_{C^\vphi} \aleq \vphi(\dlt)^{C(\tht)} (M\alp_1)^{n_1+n_2+1}.
\end{align*}
for some constants $C(\tht)$ and $M$ depending on the norms of $\zt$ and $\calH[\zt]$. Combining all these estimates, we can deduce that 
\begin{align*}
    \sum_{n=0}^\infty \nrm{\calH[S_n[\zt]]}_{C^\vphi} < \infty.
\end{align*}
The rest of the terms can be treated in a similar way. 

To complete the proof by using the fixed point theorem, we last need a Lipschitz estimates on the operators $v[\gma]$ and $\calH[v[\gma]]$. After applying careful telescoping sum formula on the differences of nonlinear term, these follow from additional calculations which are in the same spirit of above argument - see \ref{Subsec:Lip Estimates of v, Hv}.

\begin{remark} \phantom{=}
    \begin{enumerate}
        \item The above formulas \eqref{eq:Hfg formula}, \eqref{eq:usefulformula2} and \eqref{eq:T formula} provide a framework for dealing with singular integral operators in the critical modulus of continuity spaces.
        \item Our proof can also be applied to the well-posedness problem of vortex patch in $W^{2,1}$ or $H^\frac{3}{2}$, under suitable assumptions on the modulus of integrability.
        \item The operator $\calT_j^{n_1,\cdots,n_l}[f]$ also frequently appears in other free surface equations, such as the vortex sheet problem in the two-dimensional incompressible Euler equations, the Muskat problem and others. For example, in \cite{DuchonRobertVortexSheet}, Duchon and Robert analyze $\calT_j^{n_1,\cdot,n_l}[f]$ in terms of the Fourier transform to deal with the existence problem of vortex sheet. This paper provides a framework for dealing with such operators in critical-regularity spaces. 
    \end{enumerate}
\end{remark}

\subsection{Organization}
Our paper is organized as follows.
\begin{itemize}
    \item In Section \ref{Sec:Prelim}, we introduce our notations and present basic observations about the Hilbert transform in spaces $C^{\vphi}$.
    \item In Section \ref{Sec:LWP}, we prove the local well-posedness of the \eqref{eq:HCDE} in $C^{1,\vphi}$ and prove Theorem \ref{Thm2}
    \item In Section \ref{Sec:Proof of Lem}, we prove the various lemmas appearing in the previous section.
    \item In Section \ref{Sec:Discussions}, we discuss the global well-posedness
\end{itemize}

\section{Preliminaries} \label{Sec:Prelim}

\subsection{Notations}
We present notations used throughout the paper.
\begin{itemize}
    \item $\vphi$: A modulus of continuity satisfying Assumptions \ref{A1}--\ref{A3}
    \item $\tld{\vphi}$: The induced modulus of continuity of $\vphi$, defined by $\tld{\vphi}(h) = \int_0^h \frac{\vphi(r)}{r} \ud r.$
    \item $|\gma|_\star$: The infimum of arc-chord of a parametrization $\gma$, defined by 
    $|\gma|_\star := \inf_{\xi\neq\eta} \left| \frac{\gma(\xi)-\gma(\eta)}{\xi-\eta} \right|$.
    \item $\calQ[f](x,y)$: The difference quotient of a function $f$, given by $\calQ[f](x,y):= \frac{f(x)-f(y)}{x-y}$.
    \item $\calH$: The Hilbert transform. For a multi-variable function, $\calH_i$ denotes the transform with respect to $i$-th argument.
    \item $f_i$: The $i$-th component of a vector or vector-valued function $f$.
    \item $A\aleq B$: An inequality of the form $A\leq CB$, where $C$ is a constant depending only on $\vphi$.
\end{itemize}

\subsection{Hilbert Transform and Its Properties} 
In this subsection, we present several lemmas concerning the properties of the Hilbert transform, which will be used in Section \ref{Sec:LWP}. For the sake of readability, their proofs are deferred to Section \ref{Sec:Proof of Lem}. For a function $f : \bbR \to \bbR$ with sufficient decay, the Hilbert Transform $\calH[f]$ is defined by 
\begin{align*}
    \calH[f](x) = \frac{1}{\pi} P.V.\int_\bbR \frac{f(y)}{x-y} \ud y.
\end{align*}
If $f$ is defined on $\bbT$, then the Hilbert transform is 
\begin{align*}
    \calH[f](x) = \frac{1}{2\pi} P.V.\int_\bbT f(y) \cot\left( \frac{x-y}{2} \right) \ud y.
\end{align*}
As mentioned earlier, the Hilbert transform is unbounded on $C^\vphi$ for a general modulus $\vphi$. Nevertheless, we obtain the following mapping properties of $\calH :C^\vphi \to C^{\tld{\vphi}}$.
\begin{lemma} [$C^{\tld{\vphi}}$ estimate of {$\calH[f]$}] \label{Lem:img of H}
    Let $f \in C_c^\vphi(\bbR)$ with a compact support. Then its Hilbert transform $\calH[f]$ belongs to $C^{\tld{\vphi}}(\bbR)$. Furthermore, the following bounds hold:
    \begin{align} 
            & \nrm{\calH[f]}_{C^0} \aleq \nrm{f}_{C^0}^{1-\tht} \nrm{f}_{C^\vphi}^\tht,  \label{eq:Hf C^0} \\
            & \nrm{\calH[f]}_{C^{\tld{\vphi}}} \aleq \nrm{f}_{C^\vphi}.\notag
    \end{align}
\end{lemma}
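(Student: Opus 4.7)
The plan is to prove both bounds via the classical near/far splitting of the Hilbert integral, matched to the unit scale for the $C^0$ bound and to the increment size $|h|$ for the $C^{\tld{\vphi}}$ bound. The cancellation $\mathrm{P.V.}\!\int_{|y|<r}\ud y/y=0$ extracts a modulus of continuity from each near piece, and assumption \ref{A1} combined with the interpolation inequality \eqref{eq:interpolation} converts the resulting bounds into the stated form.

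For \eqref{eq:Hf C^0}, I would decompose
\begin{align*}
\calH[f](x)=\frac{1}{\pi}\,\mathrm{P.V.}\!\!\int_{|x-y|<1}\frac{f(y)-f(x)}{x-y}\,\ud y+\frac{1}{\pi}\!\!\int_{|x-y|\geq 1}\frac{f(y)}{x-y}\,\ud y.
\end{align*}
The first integral is bounded pointwise by $\widetilde{\vphi^\tht}(1)\,\nrm{f}_{C^{\vphi^\tht}}$, which is finite by \ref{A1}, and the second by $\nrm{f}_{L^1}\aleq\nrm{f}_{C^0}$ using the compact support of $f$. The interpolation inequality $\nrm{f}_{C^{\vphi^\tht}}\aleq\nrm{f}_{C^0}^{1-\tht}\nrm{f}_{C^\vphi}^\tht$ then delivers the stated bound, after noting $\nrm{f}_{C^0}\leq\nrm{f}_{C^0}^{1-\tht}\nrm{f}_{C^\vphi}^\tht$.

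For the $C^{\tld{\vphi}}$ bound I would fix $h>0$ and split the increment $\calH[f](x+h)-\calH[f](x)$ into a near piece ($|y-x|<2h$) and a far piece ($|y-x|\geq 2h$). On the near piece I would symmetrize each Cauchy kernel about its own pole, writing $\int_{|y-x|<2h}f(y)/(x-y)\,\ud y=\int_{|y-x|<2h}(f(y)-f(x))/(x-y)\,\ud y$ by the PV symmetry at $x$, and similarly at $x+h$ at the cost of a boundary constant $\tfrac{\log 3}{\pi}f(x+h)$ coming from $\mathrm{P.V.}\!\int_{|y-x|<2h}\ud y/(x+h-y)=\log 3$. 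The remaining integrands are bounded pointwise by $\vphi(|y-x|)/|y-x|\cdot\nrm{f}_{C^\vphi}$, integrating to $\tld{\vphi}(h)\,\nrm{f}_{C^\vphi}$. On the far piece I would invoke the identity
\begin{align*}
\frac{1}{x+h-y}-\frac{1}{x-y}=\frac{-h}{(x+h-y)(x-y)}
\end{align*}
and split $f(y)=f(x)+(f(y)-f(x))$. The modulus-weighted piece is controlled by $h\!\int_{2h}^{\infty}\vphi(r)/r^2\,\ud r\cdot\nrm{f}_{C^\vphi}\aleq\vphi(h)\,\nrm{f}_{C^\vphi}$ (using that $\vphi(r)/r$ is decreasing by concavity, together with the support cutoff), and the $f(x)$-piece evaluates via partial fractions to $-\tfrac{\log 3}{\pi}f(x)$ modulo a $O(h/R)$ remainder.

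The main technical point is the clean cancellation of the two $\log 3$ boundary constants: they combine into $\tfrac{\log 3}{\pi}(f(x+h)-f(x))\aleq\vphi(h)\,\nrm{f}_{C^\vphi}\leq\tld{\vphi}(h)\,\nrm{f}_{C^\vphi}$, where the last inequality uses once more that $\vphi(r)/r$ is decreasing. All other pieces are directly dominated by $\tld{\vphi}(h)\,\nrm{f}_{C^\vphi}$, closing the argument.
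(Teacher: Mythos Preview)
Your overall strategy---near/far splitting at scale $h$, symmetrizing the near pieces, and handling the far piece via the kernel difference---is essentially the paper's own argument, and your $C^0$ bound is fine. There is, however, a genuine gap in the far piece of the $C^{\tld\vphi}$ estimate.

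You claim that the modulus-weighted contribution satisfies
\[
h\int_{2h}^{\infty}\frac{\vphi(r)}{r^2}\,\ud r \;\aleq\; \vphi(h),
\]
justifying this by ``$\vphi(r)/r$ decreasing together with the support cutoff.'' But using $\vphi(r)/r\le \vphi(2h)/(2h)$ gives only
\[
h\int_{2h}^{R}\frac{\vphi(r)}{r^2}\,\ud r \;\le\; \frac{\vphi(2h)}{2}\log\frac{R}{2h}\;\simeq\;\vphi(h)\,(-\log h),
\]
and no stronger bound is available for general moduli satisfying \ref{A1}--\ref{A3}. Indeed for the prototype $\vphi(r)=(-\log r)^{-s}$ one has $\vphi(h)(-\log h)=(-\log h)^{1-s}\simeq\tld\vphi(h)\gg\vphi(h)$, so the integral is genuinely of order $\tld\vphi(h)$, not $\vphi(h)$. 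The missing step is precisely assumption \ref{A3}, which the paper invokes at this point to pass from $\vphi(h)(-\log h)$ to $\tld\vphi(h)$; you never use \ref{A3}, and without it the far piece is not controlled. Once you insert this, your argument closes and matches the paper's proof almost exactly (the paper subtracts $f(x+h)$ rather than $f(x)$ on the far piece, avoiding your $\log 3$ bookkeeping, but this is cosmetic).
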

We remark that the compact support assumption on $f$ can be relaxed to the $L^p$ decay assumption for $p>1$. Also, we note that the Hilbert transform of the compactly supported function decays as $\calH[f](x) = O(1/x)$ as $x \to \infty$. Due to the unboundedness of Hilbert transform on $C^0$, the norm $\nrm{\calH[f]}_{C^0}$ cannot be bounded solely by $\nrm{f}_{C^0}$.

We now examine the multiplication properties of the Hilbert transform on $C^\vphi$. Recall the well-known Cotlar identity \eqref{eq:Cotlar}:
\begin{align*} 
    \calH[fg] = f\calH[g] + \calH[f]g + \calH[\calH[f]\calH[g]].
\end{align*}
This identity holds for functions in $C^\vphi$ because our modulus $\vphi$ satisfies the Dini condition, which allows for a derivation via the Fourier transform. However, it does not provide direct information on the $C^\vphi$ regularity of $\calH[fg]$, due to the regularity loss in the last term. Nevertheless, we refine the Cotlar identity into the following product formula.

\begin{lemma} [An Algebra Property of the Hilbert Transform]\label{Lem:Hfg}
    Let $f, g \in C_c(\bbR)$ be continuous functions with compact support, and assume that their Hilbert transforms $\calH[f]$ and $\calH[g]$ belong to $C^\vphi(\bbR)$. Then the Hilbert transform of their product, $\calH[fg]$, is also in  $C^\vphi(\bbR)$. In particular, the following identity and corresponding norm estimates hold:
    \begin{align} \tag{\ref{eq:Hfg formula}}
        \calH[fg](x) = \frac{1}{\pi} P.V.\int_\bbR \frac{(\calH[f](x) - \calH[f](y)) (\calH[g](x) - \calH[g](y))}{x-y} \ud y,
    \end{align}
    with the norm estimates
    \begin{align} \tag{\ref{eq:Hfg nrm}}
        \nrm{\calH[fg]}_{C^\vphi}
        & \aleq \nrm{\calH[f]}_{C^\vphi} ( \nrm{\calH[g]}_{C^\vphi}^\tht \nrm{\calH[g]}_{C^0}^{1-\tht} + \nrm{\calH[g]}_{C^0} ) + \nrm{\calH[g]}_{C^\vphi} \left( \nrm{f}_{C^0} + \nrm{\calH[f]}_{C^0} \right),
    \end{align}
    or more simply,
    \begin{align} \label{eq:Hfg nrm_simple}
        \nrm{\calH[fg]}_{C^\vphi} \aleq \nrm{\calH[f]}_{C^\vphi} \nrm{\calH[g]}_{C^\vphi}.
    \end{align}
\end{lemma}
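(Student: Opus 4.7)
The identity \eqref{eq:Hfg formula} is Cotlar's identity in disguise. Writing $F = \calH[f]$, $G = \calH[g]$, I would expand the integrand into four pieces. The constant piece $F(x)G(x)/(x-y)$ vanishes after symmetric truncation (legitimate because $F(y), G(y) = O(1/|y|)$ for compactly supported $f, g$), while the remaining three pieces yield $-\pi F(x)\calH[G](x) - \pi G(x)\calH[F](x) + \pi \calH[FG](x)$. Applying $\calH^2 = -\mathrm{Id}$ (valid on our class since $\vphi$ is Dini by \ref{A1}), this becomes $\pi(f\calH[g] + g\calH[f] + \calH[\calH[f]\calH[g]])$, which equals $\pi\calH[fg]$ by the standard Cotlar identity.

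For the norm bound I would work directly from the integral representation \eqref{eq:Hfg formula} rather than from the Cotlar decomposition: the terms $f\calH[g]$ and $g\calH[f]$ would require regularity of $f, g$ themselves (which we do not have, since $f, g$ are only $C^0$), and Lemma \ref{Lem:img of H} only provides $C^{\tld\vphi}$ control on $\calH[\calH[f]\calH[g]]$. The structural reason \eqref{eq:Hfg formula} lands us in $C^\vphi$ is that the integrand has \emph{quadratic} vanishing of order $\vphi^2$ at $y = x$, which compensates exactly for the $1/|x-y|$ singularity. Set $h = |x-x'|$ and $m = (x+x')/2$, and split the integrals defining $\calH[fg](x)$ and $\calH[fg](x')$ at the scale $|y-m| = 3h$. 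In the near region $|y-m| < 3h$, bounding the numerator by $\nrm{\calH[f]}_{C^\vphi}\nrm{\calH[g]}_{C^\vphi}\vphi(|x-y|)^2$ and integrating yields, via the Dini condition \ref{A1} and the scaling \ref{A2}, a bound $\aleq \vphi(h)\nrm{\calH[f]}_{C^\vphi}\nrm{\calH[g]}_{C^\vphi}$ for each of $\calH[fg](x)$ and $\calH[fg](x')$ separately.

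In the far region $|y-m| \geq 3h$, I use the algebraic decomposition
\begin{align*}
K(x,y) - K(x',y)
&= \frac{(F(x)-F(y))(G(x)-G(y))(x'-x)}{(x-y)(x'-y)} \\
&\quad + \frac{(F(x)-F(y))(G(x)-G(x')) + (F(x)-F(x'))(G(x')-G(y))}{x'-y},
\end{align*}
where $K$ denotes the integrand of \eqref{eq:Hfg formula}. Each of the three numerator terms contributes, after integration against its denominator, a factor of $\vphi(h)$ (from $x'-x$, $G(x)-G(x')$, and $F(x)-F(x')$ respectively). The leftover difference of $F$ or $G$ is controlled by mixing its $C^\vphi$ bound on small scales with its $C^0$ bound on large scales; at this step the interpolation inequality \eqref{eq:interpolation} is applied to $G$ to replace the $C^\vphi$ contribution on large scales by a $\vphi^\tht$ contribution, producing exactly the hybrid factor $\nrm{\calH[g]}_{C^\vphi}^\tht\nrm{\calH[g]}_{C^0}^{1-\tht} + \nrm{\calH[g]}_{C^0}$ seen in \eqref{eq:Hfg nrm}. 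The asymmetric appearance of $\nrm{f}_{C^0}+\nrm{\calH[f]}_{C^0}$ on the other factor emerges when residual tails involving $F$ are re-expressed via $\calH^2 = -\mathrm{Id}$.

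\textbf{Main obstacle.} The hardest step is the far-region analysis. One must (i) preserve the conditional PV cancellation at infinity through every algebraic manipulation, since the original integrand is not absolutely integrable there, and (ii) balance the three families of estimates ($\vphi$, $\vphi^\tht$, $C^0$) with exactly the right interpolation to recover the sharp asymmetric form \eqref{eq:Hfg nrm} rather than only the cruder bound \eqref{eq:Hfg nrm_simple}. The restriction $\tht < 1/2$ from \ref{A1} is essential, because this lemma is later applied inductively in the proof of the summability of $\nrm{\calH[\calD^n]}_{C^\vphi}$.
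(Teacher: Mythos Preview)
Your plan is essentially the paper's proof. Both derive \eqref{eq:Hfg formula} from the Cotlar identity, then estimate the $C^\vphi$ seminorm of the integral representation by a near/far split at scale $h$, using the extra factor $\vphi^{\tht}$ together with the interpolation inequality \eqref{eq:interpolation} to get the sharp hybrid factor, and a truncated-Hilbert-transform bound for the tail term producing $\nrm{f}_{C^0}+\nrm{\calH[f]}_{C^0}$.

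Two organizational differences are worth noting. First, the paper truncates symmetrically around $x$ rather than around the midpoint $m$, and splits the far contribution as $I_3+I_4$, where $I_3$ carries the kernel difference $\tfrac{1}{x-y}-\tfrac{1}{x+h-y}$ and $I_4$ carries the numerator difference against the fixed kernel $\tfrac{1}{x-y}$. The symmetric truncation makes the constant part of the $I_4$ numerator vanish exactly by oddness, leaving simply
\[
I_4=\bigl(\calH[f](x{+}h)-\calH[f](x)\bigr)\,P.V.\!\int_{|x-y|>2h}\frac{\calH[g](y)}{x-y}\,\ud y+\text{(symmetric)},
\]
which is cleaner than your three-term decomposition centered at $m$. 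Second, and more importantly, the paper bounds the truncated integral here by invoking \emph{Cotlar's lemma} on the maximal Hilbert transform (Torchinsky, p.~291), which gives directly
\[
\Bigl|\,P.V.\!\int_{|x-y|>2h}\frac{\calH[f](y)}{x-y}\,\ud y\,\Bigr|\aleq \nrm{\calH[f]}_{C^0}+\nrm{f}_{C^0}.
\]
Your phrase ``re-expressed via $\calH^2=-\mathrm{Id}$'' captures only half of this: because of the truncation you do not recover $\calH[\calH[f]]=-f$ exactly, and if you instead subtract the near piece $P.V.\int_{|x-y|<2h}\tfrac{\calH[f](y)-\calH[f](x)}{x-y}\,\ud y$ you pick up a factor $\nrm{\calH[f]}_{C^\vphi}$ (or at best $\nrm{\calH[f]}_{C^0}^{1-\tht}\nrm{\calH[f]}_{C^\vphi}^\tht$) rather than $\nrm{\calH[f]}_{C^0}$. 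That weaker bound would not close the recurrence in Lemma~\ref{Lem:Hf^n}. So you do need the genuine Cotlar maximal estimate at this step; with that in hand your argument goes through.
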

\begin{remark}
    As a corollary of the estimate of \eqref{eq:Hfg formula}, under the same assumptions on the functions, we have the following estimate:
    \begin{align} \label{eq:prod_cornrm}
        \left\Vert \int_\bbR \frac{(f(x)-f(y))(g(x)-g(y))}{x-y} \ud y \right\Vert_{C^\vphi} \aleq\nrm{f}_{C^\vphi} \nrm{g}_{C^\vphi},
    \end{align}
    This inequality will be repeatedly used in later sections.
\end{remark}
Note that in this lemma, we do not assume that $f$ or $g$ belongs to $C^\vphi$. This lemma and its proof form a cornerstone of our paper. It allows us to control the product of functions under the Hilbert transform without the regularity loss inherent in the classical Cotlar identity. To obtain the bound \eqref{eq:Hfg nrm}, the assumption \ref{A1} and the interpolation inequality \eqref{eq:interpolation} play a key role. Without these, we can only obtain \eqref{eq:Hfg nrm_simple}, which is not enough to establish the crucial estimates - Lemma \ref{Lem:Hf^n}, \ref{Lem:T2}    and \ref{Lem:T6}.

\begin{lemma} [Estimate of {$\calH[f^n]$}] \label{Lem:Hf^n}
    Suppose that a function $f \in C^\vphi$ is compactly supported in $[-\dlt,\dlt]$, and that its Hilbert transform $\calH[f]$ also belongs to $C^\vphi$. We further assume that the following bounds hold for some constant $M>0$:
    \begin{align*} 
        \nrm{f}_{C^0} \aleq M \vphi(\dlt), \quad \nrm{f}_{C^\vphi} \aleq M, \quad
        \nrm{\calH[f]}_{C^0} \aleq M \vphi(\dlt)^{1-\tht}, \quad \nrm{\calH[f]}_{C^\vphi} \aleq M \vphi(\dlt)^{-\tht}.
    \end{align*}
    Let $C_{P,1}$ be a constant depending only on $\vphi$. If $\dlt$ is sufficiently small such that $\vphi(\dlt)^\tht < C_{P,1}/2$, then for $\alpha_1 := C_{P,1}\vphi(\dlt)^{1-2\tht}$, we have the following exponential bound:
    \begin{align} \label{eq:Hf^n nrm}
        \nrm{\calH[f^n]}_{C^\vphi} \aleq \vphi(\dlt)^{\tht-1} (M\alpha_1)^n.
    \end{align}
\end{lemma}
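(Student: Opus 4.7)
The plan is to induct on $n$, with Lemma \ref{Lem:Hfg} supplying the key multiplicative bound. The base case $n=1$ follows directly from the hypothesis $\nrm{\calH[f]}_{C^\vphi}\aleq M\vphi(\dlt)^{-\tht}$, provided $C_{P,1}$ is chosen at least as large as the implicit constant in this bound.

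For the inductive step, I would write $f^n = f \cdot f^{n-1}$ and apply the product estimate \eqref{eq:Hfg nrm}. This reduces the problem to controlling the four norms $\nrm{f^{n-1}}_{C^0}$, $\nrm{f^{n-1}}_{C^\vphi}$, $\nrm{\calH[f^{n-1}]}_{C^0}$, and $\nrm{\calH[f^{n-1}]}_{C^\vphi}$. The first two follow from the Leibniz-type expansion
\begin{align*}
    f^{n-1}(x) - f^{n-1}(y) = (f(x)-f(y))\sum_{j=0}^{n-2} f(x)^j f(y)^{n-2-j},
\end{align*}
combined with the hypotheses on $f$, yielding $\nrm{f^{n-1}}_{C^0} \leq (M\vphi(\dlt))^{n-1}$ and $\nrm{f^{n-1}}_{C^\vphi} \aleq (n-1) M^{n-1} \vphi(\dlt)^{n-2}$. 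The $C^0$ bound on $\calH[f^{n-1}]$ is then extracted from the interpolation estimate \eqref{eq:Hf C^0} of Lemma \ref{Lem:img of H}:
\begin{align*}
    \nrm{\calH[f^{n-1}]}_{C^0} \aleq \nrm{f^{n-1}}_{C^0}^{1-\tht}\nrm{f^{n-1}}_{C^\vphi}^\tht \aleq n^\tht M^{n-1}\vphi(\dlt)^{n-1-\tht},
\end{align*}
while the $C^\vphi$ norm of $\calH[f^{n-1}]$ is supplied by the inductive hypothesis.

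Substituting these bounds into \eqref{eq:Hfg nrm} yields three contributions. The dominant one, $\nrm{\calH[f]}_{C^\vphi}\nrm{\calH[f^{n-1}]}_{C^\vphi}^\tht \nrm{\calH[f^{n-1}]}_{C^0}^{1-\tht}$, produces a $\vphi(\dlt)$-exponent of $-\tht + \tht(\tht-1) + (n-1-\tht)(1-\tht) + \tht(n-1)(1-2\tht)$, which exceeds the target exponent $\tht-1+n(1-2\tht)$ by the nonnegative surplus $2\tht(1-\tht)(n-2)$; the remaining two contributions carry an additional factor $\vphi(\dlt)^{\tht(2n-3)}$ that is absorbed using the smallness assumption $\vphi(\dlt)^\tht < C_{P,1}/2$. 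The Leibniz-generated polynomial factor $n^{\tht(1-\tht)}$ is swallowed by enlarging $C_{P,1}$, since $n^{\tht(1-\tht)}\leq 2^{n\tht(1-\tht)}$. The main obstacle is therefore purely algebraic bookkeeping: one must carefully track four competing powers of $\vphi(\dlt)$ through the product rule and verify that the surplus exponent $2\tht(1-\tht)(n-2)\geq 0$ (together with $\vphi(\dlt)\leq 1$) prevents any blow-up, so that the geometric base $\alpha_1 = C_{P,1}\vphi(\dlt)^{1-2\tht}$ indeed closes the induction. The restriction $0<\tht<\frac{1}{2}$ enters precisely here to guarantee that $\alpha_1$ is small enough for the summability of $\sum_n \nrm{\calH[f^n]}_{C^\vphi}$ later in the argument.
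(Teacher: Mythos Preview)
Your approach is essentially the paper's: both argue by applying the product estimate \eqref{eq:Hfg nrm} to $f^n=f\cdot f^{n-1}$ and inducting on $n$. The one substantive difference is the ordering in the asymmetric estimate. The paper places $f^{n-1}$ in the \emph{first} slot of \eqref{eq:Hfg nrm} and $f$ in the second, which yields the clean linear recursion
\[
a_n \;\le\; (M\alpha_1)\,a_{n-1} \;+\; C_{P,2}\,\alpha_0^{-1}(M\alpha_0^{1-\tht})^n,
\qquad a_n:=\nrm{\calH[f^n]}_{C^\vphi},\ \alpha_0:=\vphi(\dlt),
\]
and then iterates it explicitly, summing the resulting geometric series using $\alpha_1>2\alpha_0^{1-\tht}$ (equivalent to the smallness hypothesis). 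Your ordering puts $f^{n-1}$ in the second slot, so the interpolated term carries $a_{n-1}^{\tht}$ rather than $a_{n-1}$, and you close by direct induction on the target bound.

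This works, but your bookkeeping for the ``remaining two contributions'' is off. The term $\nrm{\calH[f^{n-1}]}_{C^\vphi}\bigl(\nrm{f}_{C^0}+\nrm{\calH[f]}_{C^0}\bigr)$ does \emph{not} carry the surplus $\vphi(\dlt)^{\tht(2n-3)}$ you claim: after inserting the inductive hypothesis it carries only a surplus of $\vphi(\dlt)^{\tht}$, and it multiplies the inductive constant linearly. In effect this term \emph{is} a linear recursion, with coefficient $\sim M\vphi(\dlt)^{1-\tht}$; since $\vphi(\dlt)^{1-\tht}\ll \alpha_1$ it is harmless, but the reason is not the one you gave. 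Swapping to the paper's ordering makes the linear recursion explicit from the outset and avoids having to juggle the sublinear $a_{n-1}^{\tht}$ contribution alongside it.
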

\begin{remark} \label{Rmk:prefactor}
    The apparent divergence of the prefactor $\vphi(\dlt)^{\tht-1}$ as $\dlt \to 0$ is not an issue in our argument. Since the proof only requires satisfying a finite set of smallness conditions on $\dlt$, we can fix a single positive $\dlt$ throughout the analysis. Consequently, this prefactor becomes a constant, and the convergence of the series is driven by the term $(M\alp_1)^n$.
\end{remark}
We note that the assumptions on $f$ are tailored to the norm of the derivative of the localized function $\rd_\xi \zt$. Also, as discussed earlier, since $\alpha_1 = C_{P,1} \vphi(\dlt)^{1-2\tht}$, to ensure the exponent of $\vphi(\dlt)$ is non-negative, we need to assume $0<\tht<\frac{1}{2}$.

Finally, we recall a fundamental property that the Hilbert transform commutes with the difference quotient (and thus with differentiation):
\begin{align*}
    \calH[\calQ [f]] = \calQ[\calH[f]].
\end{align*}

\section{Local Well-Posedness of Vortex Patches} \label{Sec:LWP}

In this section we prove Theorems \ref{Thm1} and \ref{Thm2}. As we have discussed in the section \ref{Subsec:Main Results}, we consider equations for $\gma$ and $\calH[\gma]$ simultaneously: 
\begin{align} \tag{HCDE}
    \begin{cases}
        & \rd_t \gma = v[\gma] \\
        & \rd_t \calH(\gma) = \calH[v[\gma]] \\
        & \gma_{t=0} = \gma_0 \\
        & \calH[\gma]_{t=0} = \calH[\gma_0].
    \end{cases}
\end{align}
The proof is based on the standard fixed-point argument. 

\subsection{Functional Setup}
In this subsection, we define the functional framework for the fixed-point argument. We consider the Banach space for the pair $(\gma, \calH[\gma])$ defined as $\bfB_\vphi = (C^{1,\vphi}(\bbT;\bbR^2))^2$. To guarantee a $C^1$ parametrization to be regular parametrization, we need to control the arc-chord
\begin{align*}
    |\gma|_\star = \inf_{\xi \neq \eta} \frac{|\gma(\xi)-\gma(\eta)|}{|\xi-\eta|}.
\end{align*}
For a parametrization $\gma \in \bfB_\vphi$, the condition $|\gma|_{\star} > 0$ ensure that $\gma$ has no self-intersection. Moreover, by the inverse function theorem, this condition ensures that an actual curve and its parametrization $\gma$ have the equal regularity. We therefore define an open set $\bfO^M$ as
\begin{align*}
    \bfO^M = \left\{(\gma^1,\gma^2) \in \bfB_\vphi : \nrm{\gma^1}_{C^{1,\vphi}}, \nrm{\gma^2}_{C^{1,\vphi}} < M, |\gma^1|_\star > \frac{1}{M} \right\}.
\end{align*} 
Note that no assumption is made on the arc-chord norm of $\gma^2$, which represents $\calH[\gma]$.

\subsection{Regularity of the Velocity Field and Its Hilbert Transform} \label{Subsec:Estimates of v, Hv}
In this subsection, we prove that the velocity operator $v$ and its Hilbert transform $\calH[v]$ is indeed $(C^{1,\vphi}(\bbT))^2$. We first present the result for $v$.
\begin{proposition} \label{Prop:est of v}
    Let $(\gma,\calH(\gma)) \in \bfO^M$ for some $M>1$. Then the velocity operator $v = v[\gma]$ satisfies $C^{1,\vphi}(\bbT;\bbR^2)$. In particular, the derivative of $v$ satisfies
    \begin{align} \label{eq:dv formula}
        \rd_\xi v[\gma](\xi) = P.V.\int_\bbT \frac{(\gma(\xi)-\gma(\eta))\cdot\gma(\xi)}{|\gma(\xi) - \gma(\eta)|^2} \rd_\xi \gma(\eta) \ud \eta.
    \end{align}
    In addition, for any $M>1$ there exists a constant $C(M)$ such that
    \begin{align*} 
        \nrm{v[\gma]}_{C^{1,\vphi}} \aleq C(M).
    \end{align*}
\end{proposition}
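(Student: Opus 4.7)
The plan is to first derive formula \eqref{eq:dv formula} by differentiating $v[\gma]$ under the integral sign, and then to obtain the $C^\vphi$ estimate on $\rd_\xi v[\gma]$ via the decomposition strategy of Section \ref{Subsec:Ideas}. For the formula, I would use $\rd_\xi \log|\gma(\xi)-\gma(\eta)| = \frac{(\gma(\xi)-\gma(\eta))\cdot\rd_\xi\gma(\xi)}{|\gma(\xi)-\gma(\eta)|^2}$, together with a principal-value argument to cancel the non-integrable singularity at the diagonal against the odd symmetry of the kernel. The $C^0$ bound $\nrm{v[\gma]}_{C^0}\aleq C(M)$ is immediate from the arc-chord bound $|\gma|_\star > 1/M$, which gives $|\log|\gma(\xi)-\gma(\eta)||\aleq |\log|\xi-\eta|| + \log M$, combined with the local integrability of the logarithm on $\bbT$.

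The $C^\vphi$ estimate on $\rd_\xi v[\gma]$ is obtained locally about an arbitrary base point $\xi_0\in\bbT$. Fix a smooth cutoff $\chi$ supported in $[-\dlt,\dlt]$ and equal to $1$ on $[-\dlt/2,\dlt/2]$, with $\dlt=\dlt(M)$ to be chosen small. Set $U=\rd_\xi\gma(\xi_0)$ (so $|U|\geq 1/M$) and $\zt=(\gma-U\xi)\chi$. The scaling identity from \ref{A2}, together with Lemma \ref{Lem:Hfg} applied to $\gma\chi$, yields
\begin{align*}
    \nrm{\zt}_{C^1}\aleq M\vphi(\dlt),\quad \nrm{\zt}_{C^{1,\vphi}}\aleq M,\quad \nrm{\calH[\zt]}_{C^1}\aleq M\vphi(\dlt)^{1-\tht},\quad \nrm{\calH[\zt]}_{C^{1,\vphi}}\aleq M\vphi(\dlt)^{-\tht}.
\end{align*}
Split $\rd_\xi v[\gma]$ into a near-diagonal contribution (integrating over $|\xi-\eta|<\dlt$) and a far-diagonal one. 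The far piece has smooth bounded integrand away from the diagonal and therefore lies in $C^{1,\alp}$ for any $\alp\in(0,1)$ with norm $\aleq C(M)$. For the near piece, factor $(\xi-\eta)$ out of the numerator and denominator to expose the Hilbert kernel $1/(\xi-\eta)$ multiplied by $\frac{\calQ[\gma]\cdot U}{|\calQ[\gma]|^2}$; on the support of $\chi$, $\nrm{\calQ[\zt]}_{L^\infty}\ll |U|$ for $\dlt$ small, so $|\calQ[\gma]|^{-2}=|U|^{-2}\sum_{n\geq 0}(-1)^n\calD^n$ converges uniformly, reducing the estimate to bounding the operator $S[\zt]$ in $C^\vphi$.

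For each $n$, use the decomposition
\begin{align*}
    S_n[\zt](\xi) = \rd_\xi\zt(\xi)\,\calH_2[\calD^n](\xi,\xi) + \calH[\rd_\xi\zt](\xi)\,\calD^n(\xi,\xi) + R_n(\xi),
\end{align*}
with $R_n$ the symmetric remainder whose integrand is $\frac{(\rd_\xi\zt(\xi)-\rd_\xi\zt(\eta))(\calD^n(\xi,\xi)-\calD^n(\xi,\eta))}{\xi-\eta}$. Then invoke three ingredients: (i) the algebra property of $C^\vphi$, giving $\nrm{\calD^n}_{C^\vphi}\aleq (M\alp_0)^n$ for $\alp_0=C\vphi(\dlt)$; (ii) Lemma \ref{Lem:Hf^n} applied to each monomial $\calQ[\zt_i]^{n_1}\calQ[\zt_j]^{n_2}$ appearing in the expansion of $\calD^n$, yielding $\nrm{\calH[\calD^n]}_{C^\vphi}\aleq \vphi(\dlt)^{\tht-1}(M\alp_1)^n$ with $\alp_1=C_{P,1}\vphi(\dlt)^{1-2\tht}$; (iii) the product inequality \eqref{eq:prod_cornrm} applied to $R_n$, giving $\nrm{R_n}_{C^\vphi}\aleq \nrm{\rd_\xi\zt}_{C^\vphi}\nrm{\calD^n}_{C^\vphi}$. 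Choosing $\dlt=\dlt(M)$ so small that both $M\alp_0$ and $M\alp_1$ are $<1/2$, the three series converge absolutely and sum to $\nrm{S[\zt]}_{C^\vphi}\aleq C(M)$. Patching over $\xi_0\in\bbT$ and adding the far-diagonal contribution proves the proposition.

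The main obstacle is the simultaneous summability of $\calH[\calD^n]$. A naive Cotlar-based bound would only give $\nrm{\calH[\calD^n]}_{C^\vphi}\aleq \vphi(\dlt)^{-n\tht}$, which diverges as $n\to\infty$ for any fixed $\dlt$. The refined product identity \eqref{eq:Hfg formula} of Lemma \ref{Lem:Hfg}, combined with the interpolation inequality \eqref{eq:interpolation} available under \ref{A1}, is what produces the crucial positive exponent $1-2\tht$ in $\alp_1$; this is precisely why the restriction $0<\tht<1/2$ is imposed throughout the paper. Everything else — the patching over $\xi_0$, the far-diagonal regularity, and the convergence of the geometric expansion in $\calD$ — is routine once this central exponential bound is secured.
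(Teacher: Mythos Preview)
Your proposal is correct and follows essentially the same approach as the paper: localization at a base point with the cutoff $\chi$ and the linear approximation $\zt=(\gma-U\xi)\chi$, the same bounds on $\zt$ and $\calH[\zt]$ via \eqref{eq:Hfg nrm}, the near/far splitting, the geometric expansion of $|\calQ[\zt]+U|^{-2}$, and the summability of $\nrm{\calH[\calD^n]}_{C^\vphi}$ via Lemma \ref{Lem:Hf^n}. The only cosmetic difference is that the paper packages your three-term decomposition of $S_n$ into a separate lemma (Lemma \ref{Lem:Afg}) and records five operators $S_1,\dots,S_5$ coming from fully expanding the numerator, whereas you work with the prototype $S[\zt]$ from Section \ref{Subsec:Ideas}; the estimates are identical in substance.
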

\begin{proof}[Proof of Proposition \ref{Prop:est of v}]
    Before we begin, we observe that it suffices to prove the following:
    \begin{align*} 
        P.V.\int_\bbT \frac{(\gma(\xi)-\gma(\eta))\cdot\gma(\xi)}{|\gma(\xi) - \gma(\eta)|^2} \rd_\xi \gma(\eta) \ud \eta \in C^\vphi.
    \end{align*}
    Once this is established, the conclusion that this representation is equivalent to $\rd_\xi v[\gma](\xi)$ follows directly from the same argument in the proof of lemma 6 in the chapter 8.3 of \cite{MB02}. To simplify the notation, we abuse it and denote the right-hand side of \eqref{eq:dv formula} as $\rd_\xi v[\gma]$. Let us begin by outlining the steps of our proof. In step 1, we will clearly state what we aim to prove and discuss the decomposition of $\rd_\xi v[\gma]$. In Step 2, we examine the norm of the localized function $\zt$ of the given curve $\gma$. In step 3 and 4, we derive the estimates on the regular and singular part, respectively. In step 5, we present the way to obtain the summability on the norm of operators appearing in step 3. 

    \noindent\textbf{STEP 1: Set up and decomposition of $\rd_\xi v[\gma]$} 

    We derive the pointwise estimates for the $C^\vphi$ norm of $\rd_\xi v[\gma]$ through this proof. i.e. we show that for each $\tht \in \bbT$, 
    \begin{align} \label{eq:ptwise nrm}
        \sup_{|\xi-\tht|<\frac{1}{4}\delta} \frac{|\rd_\xi v[\gma](\xi) - \rd_\xi v[\gma](\tht)|}{|\xi-\tht|} \leq C
    \end{align}
    for small $\delta = \delta(\vphi,M)$, which will be defined later. With out loss of generality, we can assume that $\tht = 0$. In addition, the difference of $\rd_\xi v$ is invariant under the translation of the curve $\gma$, thus we also assume $\gma(0) = (0,0)$. We now proceed to decompose $\rd_\xi v[\gma]$ in the singular and regular part by separating the range of integral into the two region: one where $\xi$ and $\eta$ is close enough, and one where they are far apart. To accomplish this, take the bump function $\chi \in C^\infty(\bbT)$ defined by 
    \begin{align*}
        \chi (\xi) = \begin{cases}
            1  & \quad \xi \in [-\frac{1}{8}\delta,\frac{1}{8}\delta] \\
            0  & \quad \xi \in [-\frac{1}{4}\delta,\frac{1}{4}\delta]^c.
        \end{cases}
    \end{align*}
    Using this, we can decompose $\rd_\xi v[\gma](\xi)$ as follows:
    \begin{align*}
        \begin{split}
            \rd_\xi v[\gma](\xi)
            & = P.V.\int_\bbT \frac{(\gma(\xi)-\gma(\eta))\cdot\gma(\xi)}{|\gma(\xi)-\gma(\eta)|^2} \rd_\xi \gma(\eta) \ud \eta \\
            & = \underbrace{P.V.\int_{|\xi-\eta|<\delta/4} \frac{(\gma(\xi)-\gma(\eta))\cdot\gma(\xi)}{|\gma(\xi)-\gma(\eta)|^2} \rd_\xi \gma(\eta) \chi(\xi-\eta) \ud \eta}_{=:\rd_\xi v_s[\gma]} \\
            & \quad + \underbrace{\int_{|\xi-\eta|>\delta/8} \frac{(\gma(\xi)-\gma(\eta))\cdot\gma(\xi)}{|\gma(\xi)-\gma(\eta)|^2} \rd_\xi \gma(\eta) (1-\chi(\xi-\eta)) \ud \eta}_{=:\rd_\xi v_s[\gma]}.
        \end{split}
    \end{align*} 
    The subscript $r$ and $s$ represent the regular and singular part of $\rd_\xi v$ respectively. To deal with the singular term $\rd_\xi v_s[\gma]$, we introduce the linear approximation of $\gma$ around the $\xi = 0$. Define a new function $\zt$ as follows: 
    \begin{align*}
        \begin{cases}
            & \zt(\xi) = (\gma(\xi) - U \xi) \tld{\chi}(\xi) \\
            & U = \rd_\xi \gma(0) \\
            & |U| \neq 0.
        \end{cases}
    \end{align*} 
    where $\tld{\chi}(\xi) := \chi(\xi/4)$ is a fattend cutoff function. $\zt_i$ is well-defined on $\bbT$ and satisfies $\rd_\xi \zt_i (0) = 0$. Recall that the integral range of the first term $I_1$ is $|\xi-\eta| < \frac{\delta}{4}$ and we restrict ourselves to the case $|\xi| < \frac{\delta}{4}$ - see \eqref{eq:ptwise nrm}. Thus, we have $|\xi|, |\eta| < \delta/2 $. Given this, we can substitute $\zt(\xi) + U\xi\tld{\chi}(\xi)$ for $\gma$ in the singular term. For notational convenience, we define $\tld{\xi} = \xi\tld{\chi}(\xi)$ and we recast the $\rd_\xi v_s[\gma]$ as follows:
    \begin{align*}
        \begin{split}
            \rd_\xi v_s[\gma] (\xi) 
            & = P.V.\int_{|\xi-\eta|<\delta/4} \frac{(\gma(\xi)-\gma(\eta))\cdot\gma(\xi)}{|\gma(\xi)-\gma(\eta)|^2} \rd_\xi \gma(\eta) \chi(\xi-\eta) \ud \eta \\
            & = P.V.\int_{|\xi-\eta|<\delta/4} \frac{[(\zt(\xi)-\zt(\eta))+U(\tld{\xi}-\tld{\eta})]\cdot(\zt(\xi) + U\tld{\xi})} {|(\zt(\xi)-\zt(\eta)) + U(\tld{\xi}-\tld{\eta})|^2} (\rd_\xi \zt(\eta)+ U) \chi(\xi-\eta) \ud \eta \\
            & = P.V.\int_{|\xi-\eta|<\delta/4} \frac{\chi(\xi-\eta)}{\xi-\eta} \frac{[\calQ[\zt](\xi,\eta) + U]\cdot(\zt(\xi)+ U\tld{\xi})}{|\calQ[\zt](\xi,\eta)+U|^2} (\rd_\xi \zt(\eta)+ U) \ud \eta,
        \end{split}
    \end{align*}
    where $\calQ[\zt](\xi,\eta)$ denote the difference quotient of $\zt$. In the last equality, we use that in the regime $|\xi-\eta|<\delta/4$ and $|\xi|<\delta/4$, we have $\chi(\xi-\eta) = 1$. 
    
    The treatment of the difference quotient on the torus is somewhat delicate due to the singular factor $1/(x-y)$. To avoid this difficulty, we extend the above expression from $\bbT$ to the whole space $\bbR$.\footnote{This procedure is needed for notational and calculational convenience. One may prove the whole procedure regarding $\zt$ as a function on $\bbT$.}
    From now on, We extend $\zt, \chi$ on $\bbR$ by defining $\zt(\xi) = \chi(\xi) = 0$ for $|\xi| > \pi$\ and consider them as vector-valued functions on $\bbR$. Define the operator $\calV$ of $\zt$ as
    \begin{align*}
        & \calV[\zt] : \bbR \to \bbR^2 \\
        & \calV[\zt](\xi) :=  P.V.\int_\bbR \frac{\chi(\xi-\eta)}{\xi-\eta} \frac{[\calQ[\zt](\xi,\eta) + U]\cdot(\zt(\xi)+ U\tld{\xi})}{|\calQ[\zt](\xi,\eta)+U|^2} (\rd_\xi \zt(\eta)+ U) \ud \eta.
    \end{align*}
    We remark that $\rd_\xi v_s[\gma](\xi) = \calV[\zt](\xi)$ for $|\xi|<\dlt/4$.

    \noindent\textbf{STEP 2: Norm of the localized function $\zt$}

    In this step, we consider the relation between the norms of the localized function $\zt$, $\calH[\zt]$ and the localizing spatial scale $\delta$. First, we examine the scaling properties of $C^{1,\vphi}$. For a smooth function $f$, denote the re-scaled function as $f_\dlt(x) = f(x/\dlt)$. Then one can obtain the bounds 
    \begin{align*}
        \begin{cases}
            \nrm{f_\dlt}_{C^0} \leq \nrm{f}_{C^0} \\
            \nrm{f_\dlt}_{C^1} \leq \frac{1}{\dlt} \nrm{f}_{C^1}.
        \end{cases}
    \end{align*} 
    For the $C^{1,\vphi}$ scaling property, we need the assumption \ref{A2}: 
    \begin{align} \label{eq:scaling of C^1,vphi}
        \begin{split}
            \nrm{f_\dlt}_{\dot{C}^{1,\vphi}} = \frac{1}{\dlt}\nrm{f_\dlt'}_{\dot{C}^\vphi} 
            & = \frac{1}{\dlt}\sup_{x\neq y} \frac{|f_\dlt'(x)-f_\dlt'(y)|}{\vphi(|x-y|)} \\  
            & = \frac{1}{\dlt}\sup_{x\neq y} \frac{|f'(\dlt^{-1}x)-f'(\dlt^{-1}y)|}{\vphi(|\dlt^{-1}(x-y)|)} \smash{\underbrace{\frac{\vphi(\dlt^{-1}|x-y|)}{\vphi(|x-y|)}}_{\aleq \frac{1}{\vphi(\dlt)}}} \\
            & \aleq \nrm{f}_{\dot{C}^{1,\vphi}} \frac{1}{\dlt\vphi(\dlt)}.
        \end{split}
    \end{align}
    Using this scaling property, we obtain the norms of $\chi$ with respect to $\dlt$: $\nrm{\chi}_{\dot{C}^{1,\vphi}} \aleq (\dlt\vphi(\dlt))^{-1}$. Using the algebra properties of $C^{1,\vphi}$, we also obtain the norm of $\zt$:  
    \begin{align*}
        \begin{cases}
            & \nrm{\zt}_{C^0} \leq M \dlt \vphi(\dlt) \\
            & \nrm{\zt}_{C^1} \leq \frac{C}{\dlt} M \dlt \vphi(\dlt) + M \vphi(\dlt) \aleq M\vphi(\dlt) \\
            & \nrm{\zt}_{C^{1,\vphi}} \aleq M.
        \end{cases}
    \end{align*}
    Now we examine the norms of $\calH[\zt]$. Since we cannot guarantee that $\calH[\gamma](0) = (0,0)$, we only have the bound $\nrm{\calH[\zt]}_{C^0} \aleq M$. For the $C^1$ norm, we use \eqref{eq:Hf C^0} on $\zt$: 
    \begin{align*}
        \nrm{\calH[\zt]}_{C^1} \aleq \nrm{\zt}_{C^1}^{1-\tht} \nrm{\zt}_{C^{1,\vphi}}^\tht \aleq M \vphi(\dlt)^{1-\tht}.
    \end{align*}
    Now we estimate the $C^{1,\vphi}$ norm of $\calH[\zt]$. Denote $\bar{\gma} = \gma-U\xi$. From the fact that $\calH[\rd_\xi \bar{\gma}](0) = 0$, we already have that $\nrm{\calH[\bar{\gma}]}_{C^1([-\dlt,\dlt])} \leq M\vphi(\dlt)$. We decompose the estimate as 
    \begin{align*}
        \nrm{\calH[\zt]}_{C^{1,\vphi}} & \leq \nrm{\calH[\rd_\xi \bar{\gma} \chi]}_{C^\vphi} + \nrm{\calH[\bar{\gma} \rd_\xi \chi]}_{C^\vphi}.
    \end{align*}
    We apply the product estimate in \eqref{eq:Hfg nrm} to each term. For the first term, we have 
    \begin{align*}
        \begin{split}
            \nrm{\calH[\rd_\xi \bar{\gma} \chi]}_{\dot{C}^\vphi}
            & \aleq \nrm{\calH[\chi]}_{\dot{C}^\vphi} ( \nrm{\calH[\rd_\xi \bar{\gma}]}_{C^0}^{1-\tht}\nrm{\calH[\rd_\xi \bar{\gma}]}_{\dot{C}^\vphi}^\tht + \nrm{\calH[\rd_\xi \bar{\gma}]}_{C^0} ) \\
            & \quad + \nrm{\calH[\rd_\xi \bar{\gma}]}_{\dot{C}^\vphi} (\nrm{\calH[\chi]}_{C^0} + \nrm{\chi}_{C^0}) \\
            & \aleq \frac{1}{\vphi(\dlt)} M \vphi(\dlt)^{1-\tht} + M \\
            & \aleq M\vphi(\dlt)^{-\tht}.
        \end{split}
    \end{align*}
    Likewise, for the second term we have that 
    \begin{align*}
        \begin{split}
            \nrm{\calH[\bar{\gma} \rd_\xi \chi]}_{\dot{C}^\vphi} 
            & \aleq \nrm{\calH[\bar{\gma}]}_{\dot{C}^\vphi} ( \nrm{\calH[\rd_\xi \chi]}_{C^0}^{1-\tht} \nrm{\calH[\rd_\xi \chi]}_{\dot{C}^\vphi}^\tht  + \nrm{\calH[\rd_\xi \chi]}_{C^0} ) \\
            & \quad + \nrm{\calH[\rd_\xi \chi]}_{\dot{C}^\vphi} (\nrm{\bar{\gma}}_{C^0} + \nrm{\calH[\bar{\gma}]}_{C^0}) \\
            & \aleq M \dlt \left( \frac{1}{\dlt^{1-\tht}} \frac{1}{(\dlt\vphi(\dlt))^\tht} + \frac{1}{\dlt} \right) + 
            \frac{M}{\dlt \vphi(\dlt)} \dlt \vphi(\dlt) \\
            & \aleq M\vphi(\dlt)^{-\tht}.
        \end{split}
    \end{align*}
    Hence, we conclude that 
    \begin{align*}
        \nrm{\calH[\zt]}_{C^{1,\vphi}} \aleq M\vphi(\dlt)^{-\tht}.
    \end{align*}

    \noindent\textbf{STEP 3: Estimates of the regular part} 

    The integral for $\rd_\xi v_r$ is taken over the range $|\xi-\eta| > \delta/8$. In this regime, the integrand is bounded and differentiable with respect to $\xi$. Consequently, $\rd_\xi v_{r}$ satisfies $C^1$ and we can compute its norm.
    \begin{align*}
        \nrm{\rd_\xi v_r}_{C^1} \aleq C(M) \dlt^{-1}.
    \end{align*}
    Moreover, by applying the embedding, we obtain that 
    \begin{align} \label{eq:regular part estimate}
        \nrm{\rd_\xi v_r}_{C^\alp} \aleq C(M,\alp) \dlt^{-1} \quad \forall 0 < \alpha < 1.
    \end{align}

    \noindent\textbf{STEP 4: Estimates of the singular part} 

    Since $\rd_\xi v_s[\gma](\xi) = \calV[\zt](\xi)$ for $|\xi|<\dlt/4$, we here present the $C^{\vphi}$ estimate of $\calV[\zt]$. We begin by decomposing it into the two parts:
    \begin{align*}
        \calV[\zt] (\xi) 
        & = P.V.\int_\bbR \frac{1}{\xi-\eta} \frac{[\calQ[\zt](\xi,\eta) + U]\cdot(\zt(\xi)+ U\tld{\xi})}{|\calQ[\zt](\xi,\eta)+U|^2} (\rd_\xi \zt(\eta)+ U) \ud \eta \\
        & \quad + P.V.\int_\bbR \frac{1-\chi(\xi-\eta)}{\xi-\eta} \frac{[\calQ[\zt](\xi,\eta) + U]\cdot(\zt(\xi)+ U\tld{\xi})}{|\calQ[\zt](\xi,\eta)+U|^2} (\rd_\xi \zt(\eta)+ U) \ud \eta \\
        & = \calV_1[\zt](\xi) + \calV_2[\zt](\xi).
    \end{align*}
    Note that $\calV_2[\zt]$ is defined as a principal value since there are some constant terms in the integrand. However, after expanding it, one readily obtains the $C^{\alp}$ bound for $\calV_2[\zt]$ as in a similar manner used in Step 3:
    \begin{align*}
        \nrm{\rd_\xi v_{s,2}}_{C^\alp} \aleq C(M,\alp) \dlt^{-1}.
    \end{align*}
    Also, from the compact support condition of $\zt$, we obtain the decay of $\calV_2[\zt]$:
    \begin{align*}
        |\calV_2[\zt](\xi)| \aleq \frac{1}{|\xi|} \quad \text{as $|\xi|\to \infty$}.
    \end{align*}
    We now consider the estimates of $\calV_1[\zt]$. Taking $\delta$ small enough such that $\nrm{\calQ[\zt]}_{C^0} \leq |\zt|_{C^1} \aleq M\vphi(\dlt) < 1$, we apply a power series expansion on the denominator. 
    \begin{align*}
        \begin{split}
            \frac{1}{|\calQ[\zt](\xi,\eta)+U|^2} 
            & = |U|^{-2} \left( 1+\frac{2}{|U|}(U\cdot \calQ[\zt](\xi,\eta)) + \frac{1}{|U|^2}|\calQ[\zt](\xi,\eta)|^2 \right) ^{-1} \\
            & = |U|^{-2} \sum_{n=0}^\infty (-1)^n \calD(\xi,\eta)^n,
        \end{split}
    \end{align*}
    where
    \begin{align*}
        \calD(\xi,\eta) = \frac{2}{|U|}(U\cdot \calQ[\zt](\xi,\eta)) + \frac{1}{|U|^2}|\calQ[\zt](\xi,\eta)|^2.
    \end{align*}
    We now obtain the following expression for $\calV_1[\zt]$:
    \begin{align*}
        \calV_1[\zt](\xi)
        & = \frac{1}{|U|^2} P.V.\int_\bbR \sum_{n=0}^\infty (-1)^n \frac{1}{\xi-\eta} [\calQ[\zt](\xi,\eta)+U]\cdot(\zt(\xi)+U\tld{\xi}) \calD^n(\xi,\eta) (\rd_\xi \zt(\xi)+U) \ud \eta.
    \end{align*}
    Observe that $\calV_1[\zt]$ is of the form
    \begin{align} \label{eq:decomposition of v_s}
        \calV_1[\zt] = \sum_{i=1}^5 P_i(U,\tld{\xi},\zt(\xi)) S_i, 
    \end{align}
    where $P_i(U,\tld{\xi},\zt(\xi))$ is a polynomial of $\tld{\xi}$ and $\zt(\xi)$ with coefficients depending on $U$ and
    \begin{align} \label{eq:def of S_i}
        \begin{split}
            & S_1 : = P.V.\int_\bbR \sum_{n=0}^{\infty} (-1)^n \frac{1}{\xi-\eta} \calD^n(\xi,\eta) \ud \eta \\
            & S_2 : = P.V.\int_\bbR \sum_{n=0}^{\infty} (-1)^n \frac{1}{\xi-\eta} \calD^n(\xi,\eta) \rd_\xi \zt(\eta) \ud \eta \\
            & S_3 : = P.V.\int_\bbR \sum_{n=0}^{\infty} (-1)^n \frac{1}{\xi-\eta} \calD^n(\xi,\eta) \calQ[\zt](\xi,\eta) \ud \eta \\
            & S_4 : = P.V.\int_\bbR \sum_{n=0}^{\infty} (-1)^n \frac{1}{\xi-\eta} \calD^n(\xi,\eta) \calQ[\zt_1](\xi,\eta) \rd_\xi \zt (\eta) \ud \eta \\
            & S_5 : = P.V.\int_\bbR \sum_{n=0}^{\infty} (-1)^n \frac{1}{\xi-\eta} \calD^n(\xi,\eta) \calQ[\zt_2](\xi,\eta) \rd_\xi \zt (\eta) \ud \eta.
        \end{split}
    \end{align}
    Define $S_{i,n}$ as the principal value of $n$-th summand of $S_i$. For example, 
    \begin{align*}
        S_{1,n} := (-1)^n P.V.\int_\bbT \frac{1}{\xi-\eta} \calD^n(\xi,\eta) \ud \eta.
    \end{align*}
    The principal value and the infinite sum in $S_i$ can be interchanged provided that
    \begin{align} \label{eq:S_i summability}
        \sum_{n=0}^\infty \nrm{S_{i,n}}_{C^\vphi} < \infty,
    \end{align}
    which implies that each $S_i$ belongs to $C^\vphi$. In conclusion, it suffices to prove \eqref{eq:S_i summability} to finish the proof.

    \noindent\textbf{STEP 5: Proof of \eqref{eq:S_i summability}} 

    We begin by introducing the following useful lemma that provides a way to deal with $S_{i,n}$.
    \begin{lemma} [Estimates of {$\calA[f,g]$}] \label{Lem:Afg}
        Suppose that the functions $f \in C^\vphi(\bbR)$ and $g \in C^\vphi(\bbR^2)$ are compactly supported. We further assume that their Hilbert transforms $\calH[f]$ and $\calH_2[g]$ are also in $C^\vphi$, where $\calH_2$ denotes the Hilbert transform acting on the second argument. Define the operator $\calA[f,g]$ as follows:
        \begin{align*} 
            \calA[f,g](x) = \frac{1}{\pi} P.V.\int_\bbR \frac{f(y)}{x-y} g(x,y) \ud y. 
        \end{align*}
        Then $\calA[f,g]$ belongs to $C^\vphi(\bbR)$ and satisfies the norm estimate:
        \begin{align} \label{eq:Afg nrm}
        \nrm{\calA[f,g]}_{C^\vphi} 
        & \aleq \nrm{f}_{C^\vphi} \nrm{\calH_2[g]}_{C^\vphi} + \nrm{\calH[f]}_{C^\vphi} \nrm{g}_{C^\vphi} + \nrm{f}_{C^\vphi} \nrm{g}_{C^\vphi}.
        \end{align}
    \end{lemma}
    \noindent Observe that $S_{i,n}$ is of the form $\calA[f,g_n]$:
    \begin{align*}
        \begin{split}
            S_{i,n}
            & = P.V.\int_\bbR \frac{1}{\xi-\eta} f(\eta) g_n(\xi,\eta) \ud \eta \\
            & = \calA[f,g_n](\xi).
        \end{split}
    \end{align*}
    Here, $f = 1$ or $\rd_\xi \zt(\eta)$ and $g_n = \calD^n(\xi,\eta)$ or $\calQ[\zt_j](\xi,\eta) \calD^n(\xi,\eta)$. Using \eqref{eq:Afg nrm} and the norms of $\zt$, we obtain the following bounds on $S_{i,n}$:
    \begin{align*}
        \begin{split}
            \nrm{S_{i,n}}_{C^\vphi}
            & \aleq (\nrm{f}_{C^\vphi}+\nrm{\calH[f]}_{C^\vphi}) (\nrm{g}_{C^\vphi} + \nrm{\calH_2[g]}_{C^\vphi})\\
            & \aleq (M + M\vphi(\dlt)^{-\tht}) (1 + \nrm{\calQ[\zt]}_{C^\vphi}) (1 + \nrm{\calH_2[\calQ[\zt]]}_{C^\vphi}) (\nrm{D^n}_{C^\vphi} + \nrm{\calH_2[D^n]}_{C^\vphi} ) \\
            & \aleq C(M,\dlt) (\nrm{D^n}_{C^\vphi} + \nrm{\calH_2[D^n]}_{C^\vphi} ).
        \end{split}
    \end{align*}
    Hence, it suffices to show the summability of $\nrm{D^n}_{C^\vphi}$ and $\nrm{\calH_2[D^n]}_{C^\vphi}$. $\nrm{\calD^n}_{C^\vphi}$ can be controlled by the algebra properties of $C^\vphi$: 
    \begin{align} \label{eq:D^n nrm}
        \begin{split}
            \nrm{\calD}_{C^0} 
            & \leq 2\nrm{\calQ[\zt]}_{C^0} + \frac{1}{|U|^2}\nrm{\calQ[\zt]}_{C^0}^2 \aleq M\vphi(\dlt) + M^2 \vphi(\dlt)^2 \aleq M\vphi(\dlt) \\  
            \nrm{\calD^n}_{C^\vphi} 
            &\leq n\nrm{\calD}_{C^0}^{n-1} \nrm{\calD}_{C^\vphi} \aleq C(M) [M \vphi(\dlt)]^{n-1}.
        \end{split}
    \end{align}
    Thus, for small enough $\dlt$, we obtain the exponential decay on $\nrm{\calD^n}_{C^\vphi}$. To estimate $\nrm{\calH_2[\calD^n]}_{C^\vphi}$, we use Lemma \ref{Lem:Afg}. Since $\zt$ satisfies the assumption, for small enough $\dlt$, we obtain the exponential decay of $\nrm{\calH_2[\calD^n]}_{C^\vphi}$
    \begin{align*}
        \nrm{\calH[\calD^n]}_{C^\varphi} \aleq C(M) \alpha_1^n,
    \end{align*}
    with $\alpha_1=C_{P,1} \vphi(\dlt)^{1-2\tht}<1$ which completesa the proof.
\end{proof}

Now we present the $C^{1,\vphi}$ estimates for the Hilbert transform of the velocity operators $\calH[v[\gma]]$.
\begin{proposition} \label{Prop:est of Hv}
    Let $(\gma,\calH(\gma)) \in O^M$ for some $M>1$. Then the Hilbert transform of the velocity operator $\calH[v[\gma]]$ satisfies $C^{1,\vphi}(\bbR;\bbR^2)$.  
    In addition, for any $M>1$, there exists a constant $C(M)$ such that
    \begin{align*}
        \nrm{\calH[v[\gma]]}_{C^{1,\vphi}} \aleq C(M).
    \end{align*}
\end{proposition}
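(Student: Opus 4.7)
The plan is to mirror the architecture of the proof of Proposition \ref{Prop:est of v}, applying the Hilbert transform to each piece that appears there. Since $\calH$ commutes with $\rd_\xi$, bounding $\nrm{\calH[v[\gma]]}_{C^{1,\vphi}}$ reduces to bounding $\nrm{\calH[v[\gma]]}_{C^0}$---which is immediate from Lemma \ref{Lem:img of H} applied to $v[\gma]\in C^{1,\vphi}$---and $\nrm{\calH[\rd_\xi v[\gma]]}_{C^\vphi}$. For the latter I would reuse the cutoff decomposition $\rd_\xi v[\gma] = \rd_\xi v_r + \calV[\zt]$ together with the further splitting $\calV[\zt] = \calV_1[\zt] + \calV_2[\zt]$. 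Both $\rd_\xi v_r$ and $\calV_2[\zt]$ lie in $C^\alp$ for every $0<\alp<1$ with compact support and decay at infinity, so classical H\"older theory gives $\calH[\rd_\xi v_r], \calH[\calV_2[\zt]] \in C^\alp$, which embeds into $C^\vphi$ because $\vphi$ is rougher than every H\"older modulus. The whole problem therefore reduces to estimating $\calH[\calV_1[\zt]]$ in $C^\vphi$.

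From the decomposition \eqref{eq:decomposition of v_s}, $\calV_1[\zt] = \sum_{i=1}^5 P_i(U,\tld{\xi},\zt(\xi)) S_i$, where each $P_i$ is a polynomial whose entries are either smooth cutoffs or components of $\zt$, all controlled in $C^{1,\vphi}$ together with matching $\calH$-bounds inherited from Step 2 of Proposition \ref{Prop:est of v}. I would apply Lemma \ref{Lem:Hfg} to each product $\calH[P_i \cdot S_i]$, which reduces the task to proving summable bounds on $\nrm{\calH[S_{i,n}]}_{C^\vphi}$. Expanding $\calD^n$ by the multinomial theorem, each $S_{i,n}$ becomes a sum of at most $C^n$ operators of the form $\calT_j^{n_1,n_2}[\zt]$ with $n_1+n_2$ of order $n$, so the core estimate to be established is
\begin{align*}
    \nrm{\calH[\calT_j^{n_1,n_2}[\zt]]}_{C^\vphi} \aleq \vphi(\dlt)^{C(\tht)} (M\alp_1)^{n_1+n_2+1}, \qquad \alp_1 = C_{P,1}\vphi(\dlt)^{1-2\tht}.
\end{align*}

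The hard part is precisely this estimate: a naive application of $\calH$ to $\calT_j^{n_1,n_2}[\zt]$ loses a modulus, yielding only $C^{\tld{\vphi}}$ rather than $C^\vphi$. To avoid this loss I would induct on $n_1+n_2$ via the recursion formula \eqref{eq:T formula}. Its derivation is algebraic: insert the difference-quotient identity \eqref{eq:usefulformula2} into the definition of $\calT_j^{n_1,n_2}[\zt]$, apply $\calH$, and integrate by parts in $\xi$. This produces lower-order $\calH[\calT_k^{n_1',n_2'}[\zt]]$ terms (to which the inductive hypothesis applies), together with a remainder $J_{\mathrm{rem}}$ that has precisely the product-of-differences structure of the right-hand side of \eqref{eq:Hfg formula}. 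Consequently $J_{\mathrm{rem}}$ is controlled in $C^\vphi$ by the estimate \eqref{eq:prod_cornrm} without any regularity loss, using the $C^\vphi$ and $\calH$-$C^\vphi$ bounds on $\calQ[\zt]$ already obtained in the previous proof.

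Once the key estimate is in hand, summability in $n_1+n_2$ follows because $\alp_1<1$ for sufficiently small $\dlt$; this is exactly where the restriction $0<\tht<\frac{1}{2}$ from \ref{A1} enters, to ensure the exponent $1-2\tht$ is positive. Collecting the regular contributions $\calH[\rd_\xi v_r]$ and $\calH[\calV_2[\zt]]$, the product bounds from Lemma \ref{Lem:Hfg} applied to each $\calH[P_i \cdot S_i]$, and the summable contributions from the singular part, one concludes $\nrm{\calH[v[\gma]]}_{C^{1,\vphi}} \leq C(M)$ as claimed.
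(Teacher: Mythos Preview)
Your proposal is correct and follows essentially the same route as the paper: reduce to $\calH[\calV_1[\zt]]$, use Lemma~\ref{Lem:Hfg} on the products $P_i S_i$ to reduce to $\calH[S_i]\in C^\vphi$, expand $\calD^n$ multinomially into operators $\calT_j^{n_1,n_2}[\zt]$, and then invoke the recursion formula \eqref{eq:T formula} (Lemma~\ref{Lem:T}) together with the decay estimate of Lemma~\ref{Lem:T2} to close the induction with summable $(M\alp_1)^{n_1+n_2}$ bounds. The only point where the paper is slightly more careful than your sketch is the passage from the periodic Hilbert kernel on $\bbT$ to the real-line kernel: since $\rd_\xi v_s[\gma]=\calV[\zt]$ only holds for $|\xi|<\dlt/4$, one needs an additional localization (via $\chi(2(\xi-\eta))$ and the expansion of $\cot$) before identifying the singular piece with $\calH[\calV_1[\zt]]$ on $\bbR$; this produces further smooth error terms that are handled exactly as you treat $\rd_\xi v_r$.
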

\begin{proof}[Proof of Proposition \ref{Prop:est of Hv}] 
    In this proof, we adopt the same notation and decomposition used in the proof of Proposition \ref{Prop:est of v}. We aim to show the pointwise estimate of the $C^\vphi$ norm. Since the Hilbert transform is linear, we can consider $\calH[\rd_\xi v_r]$ and $\calH[\rd_\xi v_s]$ separately.

    It has already been established that the regular part $\rd_\xi v_r$ satisfies $C^\alpha$ in \eqref{eq:regular part estimate}. From the continuity of the Hilbert transform on $C^\alpha$, we obtain
    \begin{align*}
        \calH[\rd_\xi v_r] \in C^\alpha.
    \end{align*}

    \noindent\textbf{$\bullet$ Decomposition of $\calH[\rd_\xi v_s[\gma]]$}

    We now present decomposition and localization procedure of $\calH[\rd_\xi v_s[\gma]]$. Recall that for a function $f: \bbT \to \bbR$, the Hilbert transform of $f$ is defined as
    \begin{align*}
        \calH[f](\xi) = P.V.\frac{1}{2\pi} \int_\bbT f(\eta) \cot\left( \frac{\xi-\eta}{2} \right) \ud \eta.
    \end{align*}
    Note that the kernel is obtained by periodizing the Hilbert transform on the real line,
    \begin{align*}
        \frac{1}{2\pi} \cot\left(\frac{x}{2}\right) = \frac{1}{\pi x} + \frac{1}{\pi} \sum_{n\geq 1} \frac{1}{x+2\pi n} + \frac{1}{x-2\pi n}.
    \end{align*}
    This idea motivates the following decomposition:
    \begin{align*}
        \calH[\rd_\xi v_s[\gma]] (\xi)
        & = \frac{1}{2\pi} P.V.\int_\bbT \cot\left( \frac{\xi-\eta}{2} \right) \rd_\xi v_s[\gma](\eta) \ud \eta \\
        & = \frac{1}{2\pi} \int_\bbT (1-\chi(2(\xi-\eta))) \cot\left(\frac{\xi-\eta}{2}\right) \rd_\xi v_s[\gma](\eta) \ud \eta \\
        & + \quad \frac{1}{2\pi} \int_\bbT \chi(2(\xi-\eta)) \left[ \cot\left(\frac{\xi-\eta}{2}\right) - \frac{2}{\xi-\eta} \right] \rd_\xi v_s[\gma](\eta) \ud \eta \\
        & + \quad \frac{1}{\pi} P.V.\int_\bbT \frac{\chi(2(\xi-\eta))}{\xi-\eta} \rd_\xi v_s[\gma](\eta) \ud \eta .
    \end{align*}
    Since the kernels of the first two terms are smooth and bounded, the terms themselves are smooth. We now restrict $|\xi|<\dlt/8$. Then the integral range of the last term is $\{|\xi-\eta|<\dlt/8\}$ and thus we can substitute $\calV[\zt]$ for $\rd_\xi v_s[\gma]$ and changing the integral range $\bbT$ to $\bbR$. 
    \begin{align*} 
        \frac{1}{\pi} P.V.\int_\bbT \frac{\chi(2(\xi-\eta))}{\xi-\eta} \rd_\xi v_s[\gma](\eta) \ud\eta
        & = \frac{1}{\pi} P.V.\int_\bbR \frac{\chi(2(\xi-\eta))}{\xi-\eta} \calV[\zt](\eta) \ud\eta \\
        & = \frac{1}{\pi} P.V.\int_\bbR \frac{1}{\xi-\eta} \calV_1[\zt](\eta) \ud\eta \\
        & \quad + \frac{1}{\pi} \int_\bbR \frac{\chi(2(\xi-\eta))-1}{\xi-\eta} \calV_1[\zt](\eta) \ud\eta \\
        & \quad + \frac{1}{\pi} \int_\bbR \frac{\chi(2(\xi-\eta))}{\xi-\eta} \calV_2[\zt](\eta) \ud\eta .
    \end{align*}
    The decay condition on $\calV_1[\zt]$ leads to the smoothness of the second term. Also, the decaing condition on $\calV_2[\zt]$ and the boundedness of $\calH$ in the H\"older space leads to the $C^\alp$ bound for the second term. We now consider the first term, which is clearly $\calH[\calV_1[\zt]]$. 

    Recall that in \eqref{eq:decomposition of v_s}, we established that $\calV_1[\zt]$ admits the following decomposition:
    \begin{align*}
        \calV_1[\zt](\xi) = \sum_{i=1}^5 P_i(U,\tld{\xi},\zt(\xi)) S_i.
    \end{align*}
    where $P_i(U,\tld{\xi},\zt(\xi))$ is a polynomial of $\tld{\xi}$ and $\zt(\xi)$ with coefficients depending on $U$. Since $P_i(U,\tld{\xi})$ is smooth, Lemma \ref{Lem:Hfg} implies that it suffices to verify that $\calH[S_i] \in C^\vphi$ for each $i$. We present the detailed calculation for $S_1$ and $S_4$. The remaining cases can be treated similarly.
    
    \noindent\textbf{$\bullet$ Estimates on $\calH[S_1]$} \\
    Let us proceed by analyzing the algebraic structure of $S_1$ in detail. Recall that $S_1$ is given by
    \begin{align*}
        S_1 
        & = \sum_{n=0}^\infty (-1)^n P.V.\int_\bbR \frac{1}{\xi-\eta} \calD^n(\xi,\eta)\ud \eta = \sum_{n=0}^\infty S_{1,n},
    \end{align*}
    where
    \begin{align*}
        \calD^n(\xi,\eta) & = \frac{2}{|U|} (U\cdot \calQ[\zt](\xi,\eta)) + \frac{1}{|U|^2} |\calQ[\zt]|^2(\xi,\eta).
    \end{align*}
    We shall decompose $\calD^n$ into the product of $\calQ[\zt_1]$ and $\calQ[\zt_2]$. For computational convenience, denote $\calD(\xi,\eta)$ and its power by
    \begin{align*}
        \begin{split}
            \calD (\xi,\eta)
            & = c_1\calQ[\zt_1] + c_2\calQ[\zt_2] + c_{11}\calQ[\zt_1]^2 + c_{12}\calQ[\zt_1] \calQ[\zt_2] + c_{22}\calQ[\zt_2]^2 \\
            \calD^n (\xi,\eta) & = \sum_{n_1,n_2} C_{n_1,n_2}^n \calQ[\zt_1]^{n_1} \calQ[\zt_2]^{n_2},
        \end{split}
    \end{align*}
    for some constants $c_1,\cdots,c_{22}$ and the combinatorial constants $C_{n_1,n_2}^n$. With the above representation, $S_{1,n}$ can be expressed as follows:  
    \begin{align*}
        S_{1,n} = (-1)^n \sum_{n_1,n_2} C_{n_1,n_2}^n P.V.\int_\bbR \frac{1}{\xi-\eta} \left( \calQ[\zt_1]^{n_1} \calQ[\zt_2]^{n_2} \right)(\xi,\eta) \ud \eta.
    \end{align*}
    In conclusion, dealing with $\calH[S_i]$ is reduced to considering the operators
    \begin{align} \label{eq:prop2_obj operator}
        \calH\left[ P.V.\int_\bbR \frac{1}{\xi-\eta} (\calQ[\zt_1]^{n_1} \calQ[\zt_2]^{n_2})(\xi,\eta) \ud \eta \right].
    \end{align}
    We now introduce a lemma ensuring the regularity of the above operators.
    \begin{lemma} [Formulas of {$\calH[\calT[f]]$}] \label{Lem:T}
        Let $f : \bbR \to \bbR^l$ be a vector-valued function. Define the operator $\calT_j^{n_1,\cdots,n_l}[f] : \bbR \to \bbR$ by
        \begin{align} \label{eq:def T}
            \calT_j^{n_1,\cdots,n_l}[f](x) = \begin{cases}
               \displaystyle P.V.\int_\bbR \frac{1}{x-y} \Pi \calQ[f]^{n_1,\cdots,n_l} (x,y) \ud x \quad j=0\\[10pt]
                \displaystyle P.V.\int_\bbR \frac{f'_j(x)}{x-y} \Pi \calQ[f]^{n_1,\cdots,n_l} (x,y) \ud y \quad j\in\{1,2,\cdots,l\},
                \end{cases}
        \end{align}
        where
        \begin{align} \label{eq:def PiQ}
            \Pi\calQ [f]^{n_1,\cdots,n_l} := \prod_{k=1}^l \calQ[f_k]^{n_k}.
        \end{align}
        
        Suppose that $f, \calH[f] \in C^{1,\vphi}(\bbR;\bbR^l)$ and $f$ has a compact support. Then $\calH[\calT_j^{n_1,\cdots,n_l}[f]]$ belongs to $C^\vphi$, and the following formulas hold.

        For $j=0$ and $n_1,\cdots,n_l \geq 1$,
        \begin{align} \label{eq:T formula j=0}
            \calH[\calT_0^{n_1,\cdots,n_l}[f]] = \sum_{k=1}^l \frac{n_k}{n} \calH[\calT_k^{n_1,\cdots,n_k-1,\cdots,n_l}[f]],
        \end{align}
        and for $j \in \{1,\dots,l\}$ and $n_1,\cdots,n_l \geq 1$, 
        \begin{align} 
            \begin{split} \label{eq:T formual j!=0}
                \calH[\calT_j^{n_1,\cdots,n_l}[f]] 
                & = \sum_{k=1}^l \frac{n_k}{n} \calH[f'_j \cdot \calT_k^{n_1,\cdots,n_k-1,\cdots,n_l}[f]] \\
                & \quad -2f'_j  \sum_{k=1}^l \frac{n_k}{n} \calH[\calT_k^{n_1,\cdots,n_k-1,\cdots,n_l}[f]] + \calH[f'_j]\calT_0^{n_1,\cdots,n_l}[f]\\
                & \quad + J_{\mathrm{rem}},
            \end{split}
        \end{align}
        where $n = \sum_{k=1}^l n_k$ and the remainder term $J_{\mathrm{rem}}$ is given by
        \begin{align*}
            J_{\mathrm{rem}}(x) & := \frac{1}{\pi} P.V.\int_\bbR \frac{f'_j(x) - f'_j (z)}{x-z} (\calN(x,x) - \calN(x,z)) \ud z, \\
            \calN(x,z) & := \calH_1[\Pi\calQ [f]^{n_1,\cdots,n_l}](x,z).
        \end{align*}
    \end{lemma}
    We now apply this lemma to \eqref{eq:prop2_obj operator} with $l=2$ and $f=(\zt_1,\zt_2)$. Using the notation of \eqref{eq:def T}, we can express $\calH[S_{1,n}]$ as 
    \begin{align*}
        \calH[S_{1,n}] = \sum_{n_1,n_2} C_{n_1,n_2}^n \calH[\calT_0^{n_1,n_2}[f]].
    \end{align*}
    Each summand on the right-hand side belongs to $C^\vphi$. Hence, to conclude that $\calH[S_{1,n}] \in C^\vphi$, it is sufficient to show that the norms of each summand are summable. We now introduce a lemma, which provides decay estimates for the norm of $\calH[\calT_0^{n_1,n_2}[f]]$.

    \begin{lemma} [Decay estimates of {$\calH[\calT_j^{n_1,n_2}[f]]$}]\label{Lem:T2}
        Suppose that $f \in C^{1,\vphi}(\bbR;\bbR^2)$ is supported in $[-\dlt,\dlt]$, and that its Hilbert transform $\calH[f]$ also belongs to $C^{1,\vphi}$. Assume further that the following bounds hold:
        \begin{align*}
            \begin{split}
                \nrm{f}_{C^1} \leq M \vphi(\dlt),\quad
                \nrm{f}_{C^{1,\vphi}} \aleq M,\quad
                \nrm{\calH[f]}_{C^1} \aleq M \vphi(\dlt)^{1-\tht},\quad
                \nrm{\calH[f]}_{C^{1,\vphi}} \aleq M \vphi(\dlt)^{-\tht}.
            \end{split}
        \end{align*}
        Then, for sufficiently small $\dlt>0$, we have exponential bounds on the norm of $\calH[\calT_j^{n_1,n_2}[f]]$. That is, for some constant $C(\vphi)$, for $\dlt < C(\vphi)$, 
        \begin{align}
            \begin{split}\label{eq:HT2 nrm}
                \nrm{\calH[\calT_j^{n_1,n_2}[f]]}_{C^\vphi} \aleq \vphi(\dlt)^{C(\tht)} (M\alp_1)^{n_1+n_2},
            \end{split}
        \end{align} 
        where $\alp_1=C_{P,1}\vphi(\dlt)$ and $C(\tht)$ denotes some constants depending only on $\tht$.
    \end{lemma}

    By applying the lemma to $\calH[S_{1,n}]$, we have 
    \begin{align*}
        \begin{split}
            \nrm{\calH[S_{1,n}]}_{C^\vphi}
            & \leq \sum_{n_1,n_2} C_{n_1,n_2}^n \nrm{\calH[\calT_0^{n_1,n_2}[f]]}_{C^\vphi} \\
            & \aleq \sum_{n_1,n_2} C_{n_1,n_2}^n \vphi(\dlt)^{C(\tht)} (M\alpha_1)^{n_1+n_2} \\
            & \aleq \vphi(\dlt)^{C(\tht)} \left( (c_1+c_2) (M\alpha_1) + (c_{11}+c_{12}+c_{22}) (M\alpha_1) \right)^n,
        \end{split}
    \end{align*}
    where the last inequality follows from the multinomial expansion. Hence by taking $\dlt$ small enough and thus $\alpha_1 = C_{P,1} \vphi(\dlt)^{1-2\tht}$ small enough, we obtain the summability of the above terms.

    \noindent\textbf{$\bullet$ Estimates on $\calH[S_4]$.} \\
    We recall that $S_{4,n}$ is of the form
    \begin{align*}
        \begin{split}
            S_{4,n} 
            & = (-1)^n P.V.\int_\bbR \frac{\rd_\xi \zt(\eta)}{\xi-\eta}  \calQ[\zt_1](\xi,\eta) \calD^n(\xi,\eta) \ud \eta.
        \end{split}
    \end{align*}
    Adopting the notation in \ref{Lem:T}, we have that 
    \begin{align*}
        \begin{split}
            S_{4,n} 
            & = \sum_{n_1,n_2} C_{n_1,n_2}^n P.V.\int_\bbR \frac{\rd_\xi \zt(\eta)}{\xi-\eta} \left( \Pi\calQ[\zt]^{n_1+1,n_2} \right)(\xi,\eta) \ud \eta \\
            & = \sum_{n_1,n_2} C_{n_1,n_2}^n (\calT_1^{n_1+1,n_2}[f],\calT_2^{n_1+1,n_2}[f])^t (\xi),
        \end{split}
    \end{align*}
    where the parentheses denote the vector. Using the same method as in the previous step, by applying Lemma \ref{Lem:T2}, we obtain the estimate of the norm of $\calH[S_{4,n}]$:
    \begin{align*}
        \begin{split}
            \nrm{\calH[S_{4,n}]}_{C^\vphi}
            & \aleq \sum_{n_1,n_2} C_{n_1,n_2}^n \nrm{(\calH[\calT_1^{n_1+1,n_2}[f]],\calH[\calT_2^{n_1+1,n_2}[f]])^t}_{C^\vphi} \\
            & \aleq \sum_{n_1,n_2} C_{n_1,n_2}^n \vphi(\dlt)^{C(\tht)} (M\alp_1)^{n_1+n_2+2} \\
            & \aleq M^2\vphi(\dlt)^{C(\tht)} ((c_1+c_2)(M\alp_1)+(c_{11}+c_{12}+c_{22})(M\alp_1))^n.
        \end{split}
    \end{align*}
    By taking small enough $\dlt$ such that the above series converges, we conclude that $\calH[S_4] \in C^\vphi$. A similar argument shows that $\calH[S_2],\calH[S_3],\calH[S_5] \in C^\vphi$.
\end{proof}

\subsection{Lipschitz Estimates for the Velocity Field and Its Hilbert Transform} \label{Subsec:Lip Estimates of v, Hv}
In this subsection, we establish the Lipschitz estimates for the velocity field and its Hilbert transform, which constitute the last ingredient of the fixed-point argument. Throughout this subsection, superscripts in parentheses indicate the index of functions, and $\Dlt$ denotes the difference of functions
\begin{align*}
    \Dlt[f] = f^{(1)} - f^{(2)}
\end{align*}

\begin{proposition} \label{Prop:Lip.est of v}
    Let $(\gma^{(a)},\calH[\gma^{(a)}]) \in \bfO^M, a=1,2$. We denote their distance in $\bfB$ by $\rho:=\nrm{\Dlt[ (\gma,\calH[\gma]) ]}_{C^{1,\vphi}}$. Then 
    \begin{align} \label{eq:prop3_obj1}
        \nrm{v[\gma^{(1)}]-v[\gma^{(2)}]}_{C^{1,\vphi}} \aleq C(M)\rho.
    \end{align}
\end{proposition}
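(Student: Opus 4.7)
The strategy is to run the same decomposition used in the proof of Proposition~\ref{Prop:est of v} for the difference $\Dlt[\rd_\xi v[\gma]] = \rd_\xi v[\gma^{(1)}] - \rd_\xi v[\gma^{(2)}]$. Both parametrizations are localized at the same base point with the same cutoff $\chi$, and we set $\zt^{(a)} = (\gma^{(a)}-U^{(a)}\xi)\tld{\chi}$ with $U^{(a)} = \rd_\xi\gma^{(a)}(0)$. By linearity of the extension and of $\calH$, the basic size estimates of Step 2 of Proposition~\ref{Prop:est of v} transfer to the perturbations: $|\Dlt[U]|\aleq\rho$, $\nrm{\Dlt[\zt]}_{C^{1,\vphi}}\aleq\rho$, and, via the product bound \eqref{eq:Hfg nrm} applied to $\Dlt[(\gma-U\xi)]\,\chi$, one has $\nrm{\Dlt[\calH[\zt]]}_{C^{1,\vphi}} \aleq C(M)\vphi(\dlt)^{-\tht}\rho$. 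The $C^0$ bound $\nrm{\Dlt[v]}_{C^0}\aleq C(M)\rho$ follows directly from the log-kernel representation of $v$ together with the arc-chord lower bound $|\gma^{(a)}|_\star>1/M$.

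The regular part $\rd_\xi v_r$ of the derivative is supported away from the diagonal, so differentiating its integrand in $\gma$ and applying the mean value theorem yields $\nrm{\Dlt[\rd_\xi v_r]}_{C^\alp}\aleq C(M,\alp)\rho$ for any $0<\alp<1$. The core of the proof is therefore the singular part. Mimicking the decomposition $\calV_1[\zt] = \sum_{i=1}^5 P_i(U,\tld\xi,\zt(\xi))\,S_i$ of \eqref{eq:decomposition of v_s}, we write
\begin{align*}
    \Dlt[P_i S_i] = \Dlt[P_i]\,S_i^{(2)} + P_i^{(1)}\,\Dlt[S_i].
\end{align*}
The first term is immediate: $P_i$ is a polynomial with coefficients depending smoothly on $U$, hence $\nrm{\Dlt[P_i]}_{C^\vphi}\aleq C(M)\rho$, while $\nrm{S_i^{(2)}}_{C^\vphi}\aleq C(M)$ by Proposition~\ref{Prop:est of v}.

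For $\Dlt[S_i]$ we insert the telescoping identities
\begin{align*}
    \Dlt[\calD^n] = \sum_{k=0}^{n-1} (\calD^{(1)})^k\,\Dlt[\calD]\,(\calD^{(2)})^{n-1-k},\qquad \Dlt[ab] = \Dlt[a]\,b^{(2)} + a^{(1)}\,\Dlt[b],
\end{align*}
into each summand $\Dlt[S_{i,n}]$. This expresses $\Dlt[S_{i,n}]$ as a finite sum of operators of the form $\calA[f,g]$ studied in Lemma~\ref{Lem:Afg}, where exactly one factor carries a $\Dlt$ (landing on $U$, $\zt$, $\calH[\zt]$, $\calQ[\zt]$, $\rd_\xi\zt$, or on one of the $n$ copies of $\calD$), and all remaining factors are of the type already bounded in Step~5 of Proposition~\ref{Prop:est of v}. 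Applying \eqref{eq:Afg nrm} together with the decay estimates \eqref{eq:D^n nrm} and \eqref{eq:Hf^n nrm}, the $\Dlt$-factor contributes a clean prefactor of $\rho$ (up to a fixed negative power of $\vphi(\dlt)$), while the remaining $n-1$ unperturbed factors still furnish the geometric decay $(M\alp_1)^{n-1}$ with $\alp_1 = C_{P,1}\vphi(\dlt)^{1-2\tht}$. Fixing $\dlt=\dlt(M)$ small enough that $M\alp_1<1$, as in Proposition~\ref{Prop:est of v} and Remark~\ref{Rmk:prefactor}, summation in $n$ and $k$ yields $\nrm{\Dlt[S_i]}_{C^\vphi}\aleq C(M)\rho$.

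The main obstacle is combinatorial bookkeeping: one must verify, for each of the finitely many positions at which the $\Dlt$ can land inside each telescoped summand, that the hypothesis of Lemma~\ref{Lem:Afg} is met and that the $\Dlt$-factor produces precisely one copy of $\rho$ while preserving the $(M\alp_1)^{n-1}$ decay; in particular, perturbations of $\calH[\zt]$ must be handled through the refined product formula \eqref{eq:Hfg formula} rather than the Cotlar identity so as to avoid a regularity loss. Once this is carried out for the singular part, combining with the regular and $C^0$ bounds above delivers \eqref{eq:prop3_obj1}.
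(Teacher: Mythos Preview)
Your proposal is correct and follows essentially the same approach as the paper: the same localization with identical cutoffs, the same telescoping identities for $\Dlt[\calD^n]$ and $\Dlt[ab]$, the same splitting $\Dlt[P_iS_i]=\Dlt[P_i]S_i^{(2)}+P_i^{(1)}\Dlt[S_i]$, and the same reduction of each $\Dlt[S_{i,n}]$ to operators of the form $\calA[f,g]$ controlled via Lemma~\ref{Lem:Afg} together with the decay bounds \eqref{eq:D^n nrm} and \eqref{eq:Hf^n nrm}. The paper carries out the explicit bookkeeping for $\Dlt[S_1]$, $\Dlt[S_2]$, and $\Dlt[S_4]$ in exactly the manner you outline, including the bound $\nrm{\calH[\Dlt[\calD^n]]}_{C^\vphi}\aleq n\rho\,\vphi(\dlt)^{C(\tht)}(M\alp_1)^{n-1}$ needed to close the series.
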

\begin{proof}[Proof of Proposition \ref{Prop:Lip.est of v}]
    As before, we prove \eqref{eq:prop3_obj1} by pointwise estimates of the norm. In this proof, we adopt the notations and representations used in the previous subsection. Also, functions related to $\gma$ with parenthesized superscripts denote the quantities associated with $\gma^{(\cdot)}$. Let us fix $\tht = 0$. Since $(\gma_i,\calH[\gma_i]) \in B^M$, we can take $\delta$ uniformly and use the same decomposition for each $\rd_\xi v^{(a)}$.
    We begin by presenting the bounds on the difference $\Dlt[\zt]$:
    \begin{align*}
        \begin{split}
            \nrm{\Dlt[\zt]}_{C^0} \aleq \rho\dlt\vphi(\dlt),\quad
            \nrm{\Dlt[\zt]}_{C^1} \aleq \rho\vphi(\dlt),\quad
            \nrm{\Dlt[\zt]}_{C^{1,\vphi}} \aleq \rho\\
            \nrm{\calH[\Dlt[\zt]]}_{C^1} \aleq \rho\vphi(\dlt)^{1-\tht},\quad
            \nrm{\calH[\Dlt[\zt]]}_{C^{1,\vphi}} \aleq \rho\vphi(\dlt)^{-\tht}.
        \end{split}
    \end{align*}
    
    \noindent\textbf{STEP 1: Estimates for the Regular Part}

    First, we recall the representation of $\Dlt[\rd_\xi v_r[\gma]]$:
    \begin{align*}
        \Dlt[\rd_\xi v_r[\gma]] = \int_{|\xi-\eta|>\frac{\delta}{4}} \Dlt \left[ \frac{(\gma^{(\cdot)} (\xi)-\gma^{(\cdot)} (\eta))\cdot\gma^{(\cdot)} (\xi)}{|\gma^{(\cdot)} (\xi)-\gma^{(\cdot)} (\eta)|^2 } \rd_\xi \gma^{(\cdot)} (\eta) \right] (1-\chi(\xi-\eta)) \ud \eta.
    \end{align*}
    Using a telescoping sum, we obtain the bounds on the integrand:
    \begin{align*}
        \left\Vert \frac{\ud}{\ud\xi} \Dlt \left[ \frac{(\gma^{(\cdot)} (\xi)-\gma^{(\cdot)} (\eta))\cdot\gma^{(\cdot)} (\xi)}{|\gma^{(\cdot)} (\xi)-\gma^{(\cdot)} (\eta)|^2 } \rd_\xi \gma^{(\cdot)}(\eta) \right] \right\Vert_{C^0} \aleq C(M) \rho.
    \end{align*}
    which implies that
    \begin{align*}
        \nrm{\Dlt[\rd_\xi v_r[\gma]]}_{C^\alpha} \aleq C(M,\alpha) \rho.
    \end{align*}

    \noindent\textbf{STEP 2: Estimates for the Singular Part}

    In this step, as we have done before, we may assume that $|\xi|<\dlt/4$ such that 
    \begin{align*}
        \rd_\xi v_s[\gma] = \calV[\zt].
    \end{align*}
    The estimates on $\Dlt[\calV_2[\zt]]$ can be readily obtained. Hence we focus on the difference of $\calV_1[\zt]$. Applying the telescoping sum on \eqref{eq:decomposition of v_s} yields 
    \begin{align*}
        \Dlt[ \calV_1[\zt] ]
        & = \sum_{i=1}^5 \Dlt[P_i(U,\tld{\xi},\zt(\xi))] \cdot S_i^{(2)} + \sum_{i=1}^5 P_i(U^{(1)},\tld{\xi},\zt^{(1)}(\xi)) \Dlt[S_i].
    \end{align*} 
    Since $\Dlt[U] \leq C(M) \rho$, we have the following bound for the first term:
    \begin{align*}
        \left\Vert\sum_{i=1}^5 \Dlt[P_i(U,\tld{\xi},\zt(\xi))] \cdot S_i^{(2)}\right\Vert_{C^\vphi} 
        & \aleq \sum_{i=1}^5 \nrm{\Dlt[P_i(U,\tld{\xi},\zt(\xi))]}_{C^\vphi} \cdot \nrm{S_i^{(2)}}_{C^\vphi} \\      
        & \aleq C(M) \rho.
    \end{align*} 
    Hence it suffices to obtain the same bound for the second term and this will follow from the estimate
    \begin{align*}
        \nrm{\Dlt[S_i]}_{C^\vphi} \leq C(M) \rho,
    \end{align*}
    which we present in the next step. 

    \noindent\textbf{STEP 3: Estimates for $\Dlt[\calD^n]$}

    To obtain a bound on $\Dlt[S_i]$, we begin by establishing bounds on $\Dlt[\calD^n]$. Applying a telescoping sum to $\Dlt[\calD^n]$ yields the following decomposition: 
    \begin{align*}
        \Dlt[ \calD^n ] = \Dlt[\calD] \times \sum_{k=0}^{n-1} (\calD^{(1)})^k (\calD^{(2)})^{n-k-1}.
    \end{align*}
    From the estimates on $\calD$ \eqref{eq:D^n nrm}, we obtain
    \begin{align} \label{eq:prop3_nrm Dlt D}
        \begin{split}
            \nrm{\Dlt[\calD^n]}_{C^\vphi}
            & \leq \nrm{\Dlt[\calD]}_{C^\vphi} \times \sum_{k=0}^{n-1} \nrm{(\calD^{(1)})^k (\calD^{(2)})^{n-k-1}}_{C^\vphi} \\
            & \aleq n \rho \vphi(\dlt)^{-1} (M\vphi(\dlt))^{n-1}.
        \end{split}
    \end{align}
    Similarly, we can estimate the norm of its Hilbert transform using \eqref{eq:Hfg nrm_simple} and \eqref{eq:Hf^n nrm}
    \begin{align} \label{eq:prop3_nrm calH Dlt D}
        \begin{split}
            \lVert\calH [ \Dlt[\calD^n] ]\rVert_{C^\vphi}
            & \leq \nrm{\calH[\Dlt[\calD]]}_{C^\vphi} \left( \sum_{k=0}^{n-1} \nrm{\calH[(D^{(1)})^k]}_{C^\vphi} \nrm{\calH[(\calD^{(2)})^{n-k-1}]}_{C^\vphi} \right) \\
            & \aleq \rho \vphi(\dlt)^{-\tht} \cdot \sum_{k=0}^{n-1} \nrm{\calH[(\calD^{(1)})^k]}_{C^\vphi} \nrm{\calH[(\calD^{(2)})^{n-k-1}]}_{C^\vphi} \\
            & \aleq n\rho\vphi(\dlt)^{C(\tht)} (M\alpha_1)^{n-1}.
        \end{split} 
    \end{align}

    \noindent\textbf{$\bullet$ Estimates for $\nrm{\Dlt[S_1]}_{C^\vphi}$} 

    Recall that $\Dlt[S_1]$ is defined as follows: 
    \begin{align*}
        \Dlt[S_1] = \sum_{n=0}^\infty (-1)^n P.V.\int_\bbR \frac{1}{\xi-\eta} \Dlt[\calD^n](\xi,\eta) \ud \eta.
    \end{align*}
    By applying \eqref{eq:prop3_nrm calH Dlt D}, we have the following norm estimates: 
    \begin{align*}
        \begin{split}
            \nrm{\Dlt[S_1]}_{C^\vphi}
            & \leq \sum_{n=0}^\infty \nrm{\calH_2[\Dlt[\calD^n]]}_{C^\vphi} \\
            & \aleq \rho \vphi(\dlt)^{C(\tht)} \sum_{n=1}^\infty n (M\alpha_1)^{n-1} \\
            & \aleq C(M) \rho.
        \end{split}
    \end{align*}
    In the last inequality, we assume that $\dlt$ is small enough so that the sum converges. 

    \noindent\textbf{$\bullet$ Estimates for $\nrm{\Dlt[S_2]}_{C^\vphi}$}

    Applying a telescoping sum to $\Dlt[S_2]$, we have the following decomposition: 
    \begin{align*}
        \begin{split}
            \Dlt[S_2] 
            & = \sum_{n=0}^\infty (-1)^n P.V.\int_\bbR \frac{\Dlt[\rd_\xi \zt](\eta)}{\xi-\eta} (\calD^{(2)})^n(\xi,\eta) \ud \eta \\
            & \quad + \sum_{n=0}^\infty (-1)^n P.V.\int_\bbR \frac{\rd_\xi \zt^{(1)}(\eta)}{\xi-\eta} \Dlt[\calD^n](\xi,\eta) \ud \eta \\
            & = : \Dlt S_{2,1} + \Dlt S_{2,2}.  
        \end{split}      
    \end{align*}
    For $\Dlt S_{2,1}$, we apply \eqref{eq:Afg nrm} and \eqref{eq:D^n nrm} to obtain
    \begin{align*}
        \begin{split}
            \nrm{\Dlt S_{2,1}}_{C^\vphi} 
            & \leq \sum_{n=0}^\infty (\nrm{\Dlt[\rd_\xi \zt]}_{C^\vphi} + \nrm{\calH[\Dlt[\rd_\xi \zt]]}_{C^\vphi}) (\nrm{(\calD^{(2)})^n}_{C^\vphi} + \nrm{\calH[(\calD^{(2)})^n]}_{C^\vphi} ) \\
            & \leq \rho \vphi(\dlt)^{C(\tht)} \sum_{n=0}^\infty (M\alp_1)^n \\
            & \leq C(M) \rho.
        \end{split}
    \end{align*}
    For $\Dlt S_{2,2}$, we use \eqref{eq:prop3_nrm Dlt D} and \eqref{eq:prop3_nrm calH Dlt D} to obtain 
    \begin{align*}
        \begin{split}
            \nrm{\Dlt S_{2,2}}_{C^\vphi} 
            & \leq \sum_{n=0}^\infty (\nrm{\rd_\xi \zt^{(1)}}_{C^\vphi} + \nrm{\calH[\rd_\xi \zt^{(1)}]}_{C^\vphi}) (\nrm{\Dlt[(\calD^{(\cdot)})^n]}_{C^\vphi} + \nrm{\calH[\Dlt[\calD^n]]}_{C^\vphi} ) \\
            & \leq M\vphi(\dlt)^{-\tht} \sum_{n=0}^\infty (\nrm{\Dlt[(\calD)^n]}_{C^\vphi} + \nrm{\calH[\Dlt[\calD^n]]}_{C^\vphi} ) \\
            & \leq M\vphi(\dlt)^{-\tht} \rho \sum_{n=0}^\infty n(M\alp_1)^{n-1} \\
            & \leq C(M) \rho.
        \end{split}
    \end{align*}

    \noindent\textbf{$\bullet$ Estimates for $\nrm{\Dlt[S_4]}_{C^\vphi}$}

    Recall that 
    \begin{align*}
        S_4 = \sum_{n=0}^\infty P.V.\int_\bbR \frac{1}{\xi-\eta} \calD^n(\xi,\eta) \calQ[\zt_1](\xi,\eta) \rd_\xi \zt(\eta) \ud \eta.
    \end{align*}
    Using a telescoping sum, we obtain the following decomposition:
    \begin{align*}
        \begin{split}
            \Dlt[S_4]
            & = \sum_{n=0}^\infty P.V.\int_\bbR \frac{1}{\xi-\eta} \Dlt[\calD^n](\xi,\eta) \calQ[\zt_1^{(2)}](\xi,\eta) \rd_\xi \zt^{(2)} (\eta) \ud \eta \\
            & \quad + \sum_{n=0}^\infty P.V.\int_\bbR \frac{1}{\xi-\eta} (\calD^{(1)} )^n(\xi,\eta) \Dlt[\calQ[\zt_1 ]](\xi,\eta) \rd_\xi \zt^{(2)} (\eta) \ud \eta \\
            & \quad + \sum_{n=0}^\infty P.V.\int_\bbR \frac{1}{\xi-\eta} (\calD^{(1)} )^n(\xi,\eta) \calQ[\zt_1^{(1)} ](\xi,\eta) \Dlt[\rd_\xi \zt ](\eta) \ud \eta.
        \end{split}
    \end{align*}
\end{proof}

\begin{proposition} \label{Prop:lip.est of Hv}
    Let $(\gma^{(a)},\calH[\gma^{(a)}]) \in \bfO^M, a=1,2$. Denote the distance between them in $\{C^{1,\vphi}\}^2$ by $\rho = \nrm{\Dlt[(\gma,\calH[\gma])]}_{C^{1,\vphi}}$. Then 
    \begin{align}
        \nrm{\calH[v[\gma^{(a)}]]-\calH[v[\gma^{(b)}]]}_{C^{1,\vphi}} \aleq C(M)\rho.
    \end{align}
\end{proposition}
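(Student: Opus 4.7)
The plan is to combine the structural decomposition used in the proof of Proposition \ref{Prop:est of Hv} with the telescoping strategy of Proposition \ref{Prop:Lip.est of v}. Since the arc-chord bounds and norm bounds are uniform across $\gma^{(1)}$ and $\gma^{(2)}$, one may choose a common localization scale $\dlt=\dlt(M,\vphi)$ and common cutoff $\chi$, so that the decomposition
\begin{align*}
    \calH[\rd_\xi v[\gma^{(a)}]] = \calH[\rd_\xi v_r[\gma^{(a)}]] + (\text{smooth}) + \calH[\calV_1[\zt^{(a)}]] + (\text{C}^\alp\text{ terms})
\end{align*}
holds for both $a=1,2$ with the same $\zt^{(a)}=(\gma^{(a)}-U^{(a)}\xi)\tld\chi$. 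The regular piece $\Dlt[\calH[\rd_\xi v_r[\gma]]]$ is handled as in Step 1 of Proposition \ref{Prop:Lip.est of v} combined with the boundedness of $\calH$ on $C^\alp$, and the two auxiliary terms coming from periodization decay analogously. The core reduction is therefore to prove
\begin{align*}
    \nrm{\Dlt[\calH[\calV_1[\zt]]]}_{C^\vphi} \aleq C(M)\rho.
\end{align*}

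Using $\calV_1[\zt] = \sum_{i=1}^5 P_i(U,\tld{\xi},\zt(\xi))\,S_i$, I apply a telescoping sum
\begin{align*}
    \Dlt[\calH[\calV_1[\zt]]] = \sum_{i=1}^5 \calH\left[\Dlt[P_i(U,\tld{\xi},\zt(\xi))]\, S_i^{(2)}\right] + \sum_{i=1}^5 \calH\left[P_i(U^{(1)},\tld{\xi},\zt^{(1)}(\xi))\, \Dlt[S_i]\right].
\end{align*}
The polynomials $P_i$ are smooth in $\xi$ with bounds $\nrm{\Dlt[P_i]}_{C^{1,\vphi}}\aleq C(M)\rho$, and $\calH[P_i(U,\tld{\xi},\zt(\xi))]$ is controlled by Lemma \ref{Lem:Hfg} applied to $P_i \cdot (\text{bump})$. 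Combined with the uniform bounds $\nrm{S_i^{(2)}}_{C^\vphi}\leq C(M)$ and their Hilbert transforms (established in the proof of Proposition \ref{Prop:est of Hv}), the first sum is controlled using \eqref{eq:Hfg nrm_simple}. Thus the crux is to bound $\nrm{\calH[\Dlt[S_i]]}_{C^\vphi}$ by $C(M)\rho$.

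For each $i$, expanding $\calD^n$ in powers of $\calQ[\zt_1]^{n_1}\calQ[\zt_2]^{n_2}$ as in the proof of Proposition \ref{Prop:est of Hv} reduces the task to estimating $\Dlt[\calH[\calT_j^{n_1,n_2}[f]]]$, where the $f$-slot is filled with $\zt^{(1)}$ or $\zt^{(2)}$. The plan is to establish a \emph{Lipschitz version} of Lemma \ref{Lem:T2}: for $f^{(a)}$ satisfying the hypotheses of that lemma with the same $M$ and $\dlt$, and with $\nrm{\Dlt[f]}_{C^1}\aleq\rho\vphi(\dlt)$, $\nrm{\Dlt[f]}_{C^{1,\vphi}}\aleq\rho$, $\nrm{\calH[\Dlt[f]]}_{C^1}\aleq\rho\vphi(\dlt)^{1-\tht}$, $\nrm{\calH[\Dlt[f]]}_{C^{1,\vphi}}\aleq\rho\vphi(\dlt)^{-\tht}$, one has
\begin{align*}
    \nrm{\Dlt[\calH[\calT_j^{n_1,n_2}[f]]]}_{C^\vphi} \aleq (n_1+n_2+1)\,\rho\,\vphi(\dlt)^{C(\tht)} (M\alp_1)^{n_1+n_2}.
\end{align*}
This is derived by applying the exact formulas \eqref{eq:T formula j=0}, \eqref{eq:T formual j!=0} to both $f^{(1)}$ and $f^{(2)}$, telescoping each product $\Pi\calQ[f]^{n_1,n_2}$ and $f'_j$, and then re-applying Lemma \ref{Lem:T2} to the ``frozen'' factors. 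The telescoping of $\Dlt[\Pi\calQ[f]^{n_1,n_2}]$ and $\calH[\Dlt[\Pi\calQ[f]^{n_1,n_2}]]$ is exactly the analog of \eqref{eq:prop3_nrm Dlt D}--\eqref{eq:prop3_nrm calH Dlt D} carried out for the two-factor product, producing the linear factor $n_1+n_2$.

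The main obstacle, as in Proposition \ref{Prop:Lip.est of v}, is controlling summability after differentiating the power series: each $\Dlt$ operation creates an extra combinatorial factor $n$, so one must verify that the geometric decay $(M\alp_1)^{n_1+n_2}$ from the refined Cotlar identity \eqref{eq:Hfg nrm} absorbs both the multinomial factor $C_{n_1,n_2}^n$ from expanding $\calD^n$ and the linear factor $n_1+n_2$ coming from telescoping. This forces the same condition $0<\tht<\frac{1}{2}$ and smallness of $\dlt$ as in the existence proof. Once this is in place, a final application of the remaining pieces via \eqref{eq:Hfg nrm_simple} for the polynomials $P_i$ concludes the Lipschitz bound \eqref{eq:prop3_obj1} for $\calH[v]$.
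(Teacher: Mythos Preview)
Your overall strategy is correct and parallels the paper's: both reduce to controlling $\nrm{\calH[\Dlt[S_i]]}_{C^\vphi}$ after handling the regular/periodization pieces via $C^\alp$-boundedness of $\calH$, and both exploit the recursive formulas \eqref{eq:T formula j=0}--\eqref{eq:T formual j!=0}.

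The route differs at the technical core. You propose a \emph{Lipschitz version of Lemma~\ref{Lem:T2}}, i.e.\ bounding $\Dlt[\calH[\calT_j^{n_1,n_2}[f]]]$ by applying the formulas to $f^{(1)}$ and $f^{(2)}$ separately, telescoping each product, and feeding the frozen factors back into Lemma~\ref{Lem:T2}. The paper instead absorbs the telescoping into a \emph{single enlarged operator}: it sets $f=(\Dlt[\zt_1],\Dlt[\zt_2],\zt_1^{(1)},\zt_2^{(1)},\zt_1^{(2)},\zt_2^{(2)})\in C^{1,\vphi}(\bbR;\bbR^6)$ and proves a new decay estimate, Lemma~\ref{Lem:T6}, for $\calH[\calT_j^{n_1,\dots,n_6}[f]]$ with $(n_1,n_2)\in\{(1,0),(0,1)\}$ --- exactly one $\Dlt$-slot carrying the factor $\rho$, the remaining four slots carrying $M$. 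This packaging avoids a second layer of induction (Lipschitz on top of the existing $C^0$/$C^\vphi$ recursions of Lemma~\ref{Lem:T2}) and cleanly handles the mixed products $(\calD^{(1)})^k(\calD^{(2)})^{n-1-k}$ that arise from telescoping $\Dlt[\calD^n]$. Your phrase ``re-applying Lemma~\ref{Lem:T2} to the frozen factors'' is slightly imprecise at this point: after telescoping $\Dlt[\Pi\calQ[f]^{n_1,n_2}]$ or $\Dlt[J_{\mathrm{rem}}]$ you face products involving \emph{both} $\calQ[\zt^{(1)}]$ and $\calQ[\zt^{(2)}]$ simultaneously, which are not literally covered by the two-variable Lemma~\ref{Lem:T2}. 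Your scheme can still close (via iterated use of \eqref{eq:Hfg nrm_simple} on these mixed $\calH_1[\Pi\calQ]$ terms), but the paper's six-variable lemma is the cleaner bookkeeping device for exactly this reason.
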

\begin{proof}[{Proof of Proposition \ref{Prop:lip.est of Hv}}]

    As we have established, the difference of the regular part of the velocity field belongs to $C^{1,\alpha}$ and satisfying
    \begin{align*}
        \nrm{\Dlt[\rd_\xi v_r[\gma]]}_{C^\alpha} \leq C(M,\alpha) \rho.
    \end{align*}
    Hence, we get 
    \begin{align*}
        \nrm{\calH[\Dlt[\rd_\xi v_r[\gma]]]}_{C^\alpha} \leq C(M,\alpha) \rho.
    \end{align*}

    \noindent\textbf{STEP 1: Estimates for Singular Part}

    In this step, we establish the $C^{\vphi}$ estimates for $\calH[\rd_\xi v_s[\gma]]$. First, assume that $|\xi|<\dlt/8$. Since the other term can be bounded similarly, we only focus on $\calH[\calV_1[\zt]]$. Recall that 
    \begin{align*}
        \begin{split}
            \Dlt[\calV[\zt]]
            & = \sum_{i=1}^4 P_i(\Dlt[U],\tld{\xi},\zt) S_i^{(2)} \\
            & \quad + \sum_{i=1}^4 P_i(U^{(1)},\tld{\xi},\zt) \Dlt[S_i].
        \end{split}
    \end{align*} 
    We proved that $\nrm{\Dlt P_i(U,\tld{\xi})}_{C^\alpha} \leq C(M',\alpha) \rho$ and $\nrm{S_i}_{C^\vphi} \leq C(M)$. Hence, we find that the desired Lipschitz estimate holds in the first term. Also, by the product lemma, to obtain the desired bound for the second term, it suffices to show that
    \begin{align*}
        \nrm{\calH[\Dlt[ S_i ]]}_{C^\vphi} \leq C(M) \rho.
    \end{align*}

    \noindent\textbf{STEP 3: Estimates for $\Dlt[\calH[S_1]]$}

    In this step, we establish the desired bound on $\Dlt[\calH[S_1]]$. As in the proof of Proposition 2, it is crucial to exploit the algebraic structure of $S_1$. First, we apply a telescoping sum to $\calH[\Dlt[S_1]]$ to obtain
    \begin{align*}
        \begin{split}
            \calH[\Dlt[ S_1 ] ] 
            & = \sum_{n=0}^\infty (-1)^n \calH \left[ \int_\bbR \frac{1}{\xi-\eta} \Dlt[\calD^n](\xi,\eta) \ud \eta \right] \\
            & = \sum_{n=0}^\infty (-1)^n \calH \left[ \int_\bbR \frac{1}{\xi-\eta} \Dlt[\calD](\xi,\eta) \left( \sum_{k=0}^n (\calD^{(1)})^k (\calD^{(2)})^{n-1-k} \right)(\xi,\eta) \ud \eta \right] \\
            & = \sum_{n=0}^\infty (-1)^n \sum_{k=0}^n  \calH \left[ \int_\bbR \frac{1}{\xi-\eta} \Dlt[\calD](\xi,\eta) \left( (\calD^{(1)})^k (\calD^{(2)})^{n-1-k} \right)(\xi,\eta) \ud \eta \right].
        \end{split}
    \end{align*}
    Recall the expression of $\calD$ in the proof of proposition 2.
    \begin{align*}
        \calD = c_1\calQ[\zt_1] + c_2\calQ[\zt_2] + c_{11}\calQ[\zt_1]^2 + c_{12}\calQ[\zt_1] \calQ[\zt_2] + c_{22} \calQ[\zt_2]^2.
    \end{align*}
    Before directly taking the difference of $\calD$, for notational convenience, we define the vector-valued function $f : \bbR \to \bbR^6$ as
    \begin{align*}
        f := (\Dlt[\zt_1],\Dlt[\zt_2],\zt_1^{(1)},\zt_2^{(1)},\zt_1^{(2)},\zt_2^{(2)}).
    \end{align*}
    Now, taking the difference of $\calD$ and adopting the definition of $f$ above, we have
    \begin{align*}
        \begin{split}
            \Dlt[\calD] & = c_1 \calQ[f_1] + c_2 \calQ[f_2]  + c_{11} \calQ[f_1](\calQ[f_3]+\calQ[f_4]) \\
            & \quad + c_{12} (\calQ[f_1]\calQ[f_4] + \calQ[f_2]\calQ[f_5]) + c_{22} \calQ[f_2](\calQ[f_5]+\calQ[f_6]).
        \end{split} 
    \end{align*}
    Then, we conclude that $\calH[\Dlt[S_{1,n}]]$ is the sum of the following operators
    \begin{align*}
        & \sum_{k=0}^n \calH\left[ \int_\bbR \frac{1}{\xi-\eta} \calQ[f_j] g \left( (\calD^{(1)})^k (\calD^{(2)})^{n-k} \right) \ud \eta\right] \\
        & = \sum_{k=0}^n \calH\left[ \int_\bbR \frac{1}{\xi-\eta} \calQ[f_j] g \left( \left( \sum_{n_1,n_2} C_{n_1,n_2}^k \calQ[f_3]^{n_1}\calQ[f_4]^{n_2} \right)\times \left( \sum_{n_3,n_4} C_{n_3,n_4}^{n-k} \calQ[f_5]^{n_3}\calQ[f_6]^{n_4} \right) \right) \ud \eta \right],
    \end{align*}
    for $j \in \{1,2\}$ and $g= 1, \calQ[f_3],\cdots,\calQ[f_6]$. Consequently, dealing with $\calH[\Dlt[S_1^{(\cdot)}]]$ is reduced to considering the operators
    \begin{align*}
        \calH \left[ \int_\bbR \frac{1}{\xi-\eta} \calQ[f_j] \Pi\calQ[f]^{0,0,n_3,\cdots,n_6} \ud \eta \right],
    \end{align*}
    which is indeed $\calH[\calT_0^{1,0,n_3,\cdots,n_6}[f]]$ or $\calH[\calT_0^{0,1,n_3,\cdots,n_6}[f]]$. We now introduce a lemma that provides bounds for such operators.

    \begin{lemma} [Decay estimates of $\cdots$] \label{Lem:T6}
        Suppose that a function $f \in C^\vphi(\bbR;\bbR^6)$ is supported in $[-2\dlt,2\dlt]$, and that its Hilbert transform $\calH[f]$ also belongs to $C^\vphi$. We further assume that the following bounds hold:\\
        for $k=1,2$, 
        \begin{align*}
            \begin{split}
                \nrm{f_k}_{C^0} \aleq \rho \dlt \vphi(\dlt),\quad \nrm{f_k}_{C^1} \aleq \rho \vphi(\dlt),\quad \nrm{f_k}_{C^\vphi} \aleq \rho \\
                \nrm{\calH[f_k]}_{C^1} \aleq \rho \vphi(\dlt)^{1-\tht},\quad \nrm{\calH[f_k]}_{C^\vphi} \aleq \rho \vphi(\dlt)^{-\tht},
            \end{split}
        \end{align*}
        and for $k=3,4,5,6$, 
        \begin{align*}
            \begin{split}
                \nrm{f_k}_{C^0} \aleq M\dlt \vphi(\dlt),\quad \nrm{f_k}_{C^1} \aleq M \vphi(\dlt),\quad \nrm{f_k}_{C^{1,\vphi}} \aleq M\\
                \nrm{\calH[f_k]}_{C^1} \aleq M \vphi(\dlt)^{1-\tht},\quad \nrm{\calH[f_k]}_{C^{1,\vphi}} \aleq M \vphi(\dlt)^{-\tht}.
            \end{split}
        \end{align*}
        Then for small enough $\dlt < 1$, we obtain exponential decay on the norms of $\calT$ and its Hilbert transform:
        For $j\in\{1,2\}$,
        \begin{align}
            \begin{split}
                \nrm{\calH[\calT_j^{0,0,n_3,\cdots,n_6}[f]]}_{C^\vphi} \aleq \rho \vphi(\dlt)^{C(\tht)} (M\alp_1)^{n},
            \end{split}
        \end{align}
        and for $j\in\{3,4,5,6\}$ and $(n_1,n_2) \in \{(1,0),(0,1)\}$, 
        \begin{align} \label{eq:HT6 nrm}
            \nrm{\calH[\calT_j^{n_1,n_2,n_3,\cdots,n_6}[f]]}_{C^\vphi}\aleq \rho \vphi(\dlt)^{C(\tht)} (M\alp_1)^{n+1},
        \end{align}
        where $\alp_1 = C_{P,1} \vphi(\dlt)^{1-2\tht}$ and $n = n_3 + \cdots n_6$ and $C(\tht)$ denote some constants depending only on $\tht$, which may differ from line to line.
    \end{lemma}
    
    Note that the assumptions of the above lemma are tailored to our $f$. We now apply \eqref{eq:HT6 nrm} to $\calH[\Dlt[S_{1,n}^{(\cdot)}]]$
    \begin{align*}
        \nrm{\calH[\Dlt[S_{1,n}^{(\cdot)}]]}_{C^\vphi}
        & \aleq \sum_{k=0}^n \sum_{n_1,n_2} C_{n_1,n_2}^k \sum_{n_3,n_4} C_{n_3,n_4}^{n-k} \nrm{\calH[\calT_0^{1,0,n_3+1,n_4,n_5,n_6}[f]]}_{C^\vphi} \\
        & \aleq \sum_{k=0}^n \sum_{n_1,n_2} C_{n_1,n_2}^k \sum_{n_3,n_4} C_{n_3,n_4}^{n-k} \rho \vphi(\dlt)^{C(\tht)} (M\alp_1)^{n_3+\cdots+n_6 + 1} \\
        & \aleq \rho \vphi(\dlt)^{C(\tht)} ((c_1+c_2)(M\alp_1)+ (c_{11}+c_{12}+c_{22})(M\alp_2)^2)^{n+1}.
    \end{align*}
    Applying Lemma \ref{Lem:T6} and a method similar to that in the proof of Proposition 2, we obtain, we obtain that $\nrm{(3.76)}_{C^\vphi} \leq C(M',\vphi) \rho \alpha^n $ for some $\beta_1 < 1$. Similarly, we obtain that 
    \begin{align*}
        & \nrm{\calH[\Dlt[S_{1,n}^{(\cdot)}]]} \leq C(M') \rho \beta^n \\
        & \nrm{\calH[\Dlt[S_{1}^{(\cdot)}]]} \leq C(M') \rho .
    \end{align*}
    One can estimate the difference of $\calH[S_i]$ in the similar way to apply lemma in appropriate way.

\end{proof}

\subsection{Proof of Theorem \ref{Thm2}}
We now prove Theorem \ref{Thm2}.
\begin{proof} [Proof of Theorem \ref{Thm2}]
    Given initial data $\Omg_0$, let $\gma_0 : \bbT \to \bbR^2$ be a $C^{1,\psi_0}$ parametrization of $\rd\Omg_0$ satisfying the non-degeneracy
    \begin{align*}
        |\gma_0|_\star := \inf_{\xi\neq\eta} \frac{|\gma(\xi)-\gma(\eta)|}{|\xi-\eta|} > 0.
    \end{align*} 
    Then by Lemma \ref{Lem:img of H}, $\calH[\gma_0]$ belongs to $C^{1,\psi}(\bbT)=C^{1,\tld{\psi_0}}(\bbT)$. We now apply Theorem \ref{Thm1} to the pair $(\gma_0,\calH[\gma_0])$. Then for $T = T(\psi, \nrm{\gma_0}_{C^{1,\psi}}, \nrm{\calH[\gma_0]}_{C^{1,\psi}}) = T(\psi,\nrm{\gma_0}_{C^{1,\psi}})>0$, we obtain the solution $(\gma,\calH[\gma]): \bbT \times [0,T] \to \bbR^2\times\bbR^2$ to \eqref{eq:HCDE}.
    
    Since $\gma(t)\in C^{1,\psi}$ is the solution to \eqref{eq:CDE} for $t \in [0,T]$ and $|\gma(\cdot,t)|_\star > 0$ for $t \in [0,T]$, we conclude that the vortex patch remains a $C^{1,\tld{\psi}}$ domain for time $t \in [0,T]$. 
\end{proof}

\section{Proofs of Lemmas} \label{Sec:Proof of Lem} \addtocontents{toc}{\protect\setcounter{tocdepth}{1}}
In this section, we prove the lemmas appearing in the previous sections. We also assume that $\vphi$ is a modulus of continuity satisfying \ref{A1}--\ref{A3}.

\subsection{Proof of Lemma \ref{Lem:img of H}} \label{Subsec:img of H}
\begin{proof}[Proof of Lemma \ref{Lem:img of H}]
    First, we obtain the $C^0$ bounds for $\calH[f]$:  
    \begin{align*}
        \begin{split}
            |\calH[f](x)|
            & \leq \frac{1}{\pi} \int_{\supp(f)} |f(x)-f(y)|^{1-\tht} \frac{|f(x)-f(y)|^\tht}{|x-y|} \ud y \\
            & \leq \frac{1}{\pi} \nrm{f}_{C^0}^{1-\tht} \nrm{f}_{C^\vphi}^\tht \int_{A(x)} \frac{\vphi^\tht(r)}{r} \ud r \\
            & \aleq \nrm{f}_{C^0}^{1-\tht} \nrm{f}_{C^\vphi}^\tht,
        \end{split}
    \end{align*}
    where $A(x) := \{ r \in \bbR^+ : \dist(x,\supp(f)) < r < \dist(x,\supp(f)) + 2 \diam(\supp(f)) \}$ and the last inequality follows from \ref{A1}. Next, we establish the $\dot{C}^{\tld{\vphi}}$ estimate. For $h>0$ sufficiently small, consider the following decomposition:
    \begin{align*}
        \begin{split}
            \calH[f](x+h) - \calH[f](x)
            & = \frac{1}{\pi} \int_{|x-y|<2h} \frac{f(x)-f(y)}{x-y} \ud y \\
            & \quad - \frac{1}{\pi} \int_{|x-y|<2h} \frac{f(x+h)-f(y)}{x+h-y} \ud y \\
            & \quad + \frac{1}{\pi} \int_{|x-y|>2h} \left( \frac{1}{x-y} - \frac{1}{x+h-y} \right) (f(x+h)-f(y)) \ud y \\
            & \quad + \frac{1}{\pi} \int_{|x-y|>2h} \frac{f(x+h) - f(x)}{x-y} \ud y \\
            & =: I_1 + I_2 + I_3 + I_4.
        \end{split}
    \end{align*}
    By symmetry, the last term $I_4$ vanishes. For $I_1$, we estimate 
    \begin{align*}
        \begin{split}
            |I_1| 
            & \leq \frac{1}{\pi} \int_{|x-y|<2h} \left|\frac{f(x)-f(y)}{x-y}\right| \ud y \\
            & \leq \frac{1}{\pi} \nrm{f}_{C^\vphi} \int_{|x-y|<2h} \frac{\vphi(|x-y|)}{|x-y|} \ud y \\
            & \aleq \nrm{f}_{C^\vphi} \tld{\vphi}(h).
        \end{split}
    \end{align*}
    An identical estimate holds for $I_2$:
    \begin{align*}
        |I_2| \aleq \nrm{f}_{C^\vphi} \tld{\vphi}(h).
    \end{align*}
    For $I_3$, we apply the mean value theorem on the kernel to obtain
    \begin{align} \label{eq:bound on kernel}
        \left| \frac{1}{x-y} - \frac{1}{x+h-y} \right| \leq 10 \frac{h}{|x-y|^2} \quad \text{ for $|x-y|>2h$}.
    \end{align}
    Then \eqref{eq:bound on kernel} and $I_3$ yield
    \begin{align*}
        \begin{split}
            |I_3|
            & \aleq \int_{|x-y|>2h} \left| \frac{1}{x-y} - \frac{1}{x+h-y} \right| |f(x+h)-f(y)| \ud y \\
            & \aleq \nrm{f}_{C^\vphi} h \int_{2h<|x-y|<2} \frac{1}{|x-y|} \frac{\vphi(|x-y|)}{|x-y|} \ud y \\
            & \aleq \nrm{f}_{C^\vphi} h \frac{\vphi(2h)}{2h} \int_{2h<|x-y|<C} \frac{1}{|x-y|} \ud y \\
            & \aleq \nrm{f}_{C^\vphi} \vphi(h) \left(-\log\frac{C}{h}\right) \\
            & \aleq \nrm{f}_{C^\vphi} \tld{\vphi}(h),
        \end{split}
    \end{align*}
    where in the last inequality we used \ref{A3}. Combining these estimates, we conclude the desired result:
    \begin{align*}
        \nrm{\calH[f]}_{C^{\tld{\vphi}}} \aleq \nrm{f}_{C^\vphi}.
    \end{align*}
\end{proof}

\subsection{Proof of Lemma \ref{Lem:Hfg}} \label{Subsec:Hfg}
\begin{proof}[Proof of Lemma \ref{Lem:Hfg}]
    First, the $C^0$ norm of $\calH[fg]$ is bounded by \eqref{eq:Hf C^0}
    \begin{align*}
        \nrm{\calH[fg]}_{C^0} \aleq \nrm{fg}_{C^0}^{1-\tht} \nrm{fg}_{C^\vphi}^{\tht}.
    \end{align*}
    By Cotlar identity \eqref{eq:Cotlar}, we obtain the following representation of $\calH[fg]$: 
    \begin{align*}
        \begin{split}
            \calH[fg](x) & = \calH[f](x) \cdot g(x) + f(x) \cdot \calH[g](x) + \calH[\calH[f]\calH[g]](x) \\
            & = -\frac{1}{\pi} P.V.\int_\bbR \frac{(\calH[f](x) - \calH[f](y)) (\calH[g](x) - \calH[g](y))}{x-y} \ud y.
        \end{split}
    \end{align*}
    We now establish the $C^\vphi$ estimate of $\calH[fg]$ using the above formula. For $h>0$ sufficiently small, we decompose the difference of $\calH[fg]$ as
    \begin{align*}
        \begin{split}
            & \calH[fg](x+h) - \calH[fg](x)\\
            & = \frac{1}{\pi} P.V.\int_\bbR \frac{(\calH[f](x) - \calH[f](y)) (\calH[g](x) - \calH[g](y))}{x-y} \ud y \\
            & \quad - \frac{1}{\pi} P.V.\int_\bbR \frac{(\calH[f](x+h) - \calH[f](y)) (\calH[g](x+h) - \calH[g](y))}{x+h-y} \ud y\\
            & = \frac{1}{\pi} \int_{|x-y|<2h} \frac{(\calH[f](x) - \calH[f](y)) (\calH[g](x) - \calH[g](y))}{x-y} \ud y\\
            & \quad - \frac{1}{\pi} \int_{|x-y|<2h} \frac{(\calH[f](x+h) - \calH[f](y)) (\calH[g](x+h) - \calH[g](y))}{x+h-y} \ud y \\
            & \quad + \frac{1}{\pi} P.V.\int_{|x-y|>2h} \left(\frac{1}{x-y}-\frac{1}{x+h-y}\right) (\calH[f](x+h) - \calH[f](y)) (\calH[g](x+h) - \calH[g](y)) \ud y\\
            & \quad + \frac{1}{\pi} P.V.\int_{|x-y|>2h} \frac{1}{x-y} \bigg[ (\calH[f](x) - \calH[f](y))(\calH[g](x) - \calH[g](y)) \\
            & \qquad \qquad \qquad \qquad \qquad \qquad \qquad \qquad- (\calH[f](x+h) - \calH[f](y))(\calH[g](x+h) - \calH[g](y)) \bigg] \ud y\\
            & =: I_1+I_2+I_3+I_4.
        \end{split}
    \end{align*} 
    For $I_1$, we estimate
    \begin{align*}
        \begin{split}
            |I_1| 
            & \aleq \int_{|x-y|<2h} \left| \frac{(\calH[f](x) - \calH[f](y)) (\calH[g](x) - \calH[g](y))}{x-y} \right| \ud y\\
            & \aleq \nrm{\calH[f]}_{C^\vphi} \nrm{\calH[g]}_{C^{\vphi^\tht}} \int_{|x-y|<2h} \frac{\vphi(|x-y|)^{1+\tht}}{|x-y|} \ud y\\
            & \aleq \nrm{\calH[f]}_{C^\vphi} \nrm{\calH[g]}_{C^{\vphi^\tht}} \vphi(2h)\tld{\vphi^\tht}(2h) \\ 
            & \aleq \nrm{\calH[f]}_{C^\vphi} \nrm{\calH[g]}_{C^{\vphi^\tht}} \vphi(h) \\
            & \aleq \nrm{\calH[f]}_{C^\vphi} \nrm{\calH[g]}_{C^0}^{1-\tht} \nrm{\calH[g]}_{C^\vphi}^\tht \vphi(h),
        \end{split}
    \end{align*}
    where the last inequality comes from the interpolation inequality \eqref{eq:interpolation}. A similar argument yields the same estimates for $I_2$: 
    \begin{align*}
        |I_2| \aleq \nrm{\calH[f]}_{C^\vphi} \nrm{\calH[g]}_{C^0}^{1-\tht} \nrm{\calH[g]}_{C^\vphi}^\tht \vphi(h).
    \end{align*}
    For $I_3$, the bound on kernel \eqref{eq:bound on kernel} yields
    \begin{align*}
        \begin{split}
            |I_3| & \aleq \int_{|x-y|>2h} \left| \left(\frac{1}{x-y}-\frac{1}{x+h-y}\right) (\calH[f](x+h) - \calH[f](y)) (\calH[g](x+h) - \calH[g](y))\right| \ud y \\
            & \aleq \nrm{\calH[f]}_{C^\vphi} \nrm{\calH[g]}_{C^{\vphi^\tht}} \int_{|x-y|>2h} \frac{h}{|x-y|^2} \vphi(|x+h-y|)^{1+\tht} \ud y.
        \end{split}
    \end{align*}
    The integral on the right-hand side is bounded as follows.
    \begin{align*}
        \begin{split}
            \int_{|x-y|>2h} \frac{h}{|x-y|^2} &\vphi(|x+h-y|)^{1+\tht} \ud y \\
            & = h \left( \int_{2h< |x-y| < 1} \frac{\vphi(|x-y|)^{1+\tht}}{|x-y|^2} \ud y + \int_{|x-y| > 1} \frac{\vphi(|x-y|)^{1+\tht}}{|x-y|^2} \ud y \right) \\
            & \leq  h \left( \frac{\vphi(h)}{h}\int_{2h< |x-y| < 1} \frac{\vphi^\tht(|x-y|)}{|x-y|} \ud y + \int_{|x-y| > 1} \frac{\vphi(|x-y|)^{1+\tht}}{|x-y|^2} \ud y \right) \\
            & \aleq \vphi(h) \tld{\vphi^\tht}(1) + h\nrm{\vphi}_{L^\infty}^{1+\tht} \\
            & \aleq \vphi(h).
        \end{split}
    \end{align*}
    This bound leads to 
    \begin{align*}
        \begin{split}
            |I_3| & \aleq \nrm{\calH[f]}_{C^\vphi} \nrm{\calH[g]}_{C^{\vphi^\tht}} \vphi(h) \\
            & \aleq \nrm{\calH[f]}_{C^\vphi} \nrm{\calH[g]}_{C^0}^{1-\tht} \nrm{\calH[g]}_{C^\vphi}^\tht \vphi(h).
        \end{split}
    \end{align*}
    Finally, we can recast $I_4$ by using the symmetry:
    \begin{align*}
        \begin{split}
            I_4 
            & = \frac{1}{\pi} (\calH[f](x+h)-\calH[f](x))\,P.V.\int_{|x-y|>2h} \frac{\calH[g](y)}{x-y} \ud y \\
            & \quad + \frac{1}{\pi} (\calH[g](x+h)-\calH[g](x))\,P.V.\int_{|x-y|>2h} \frac{\calH[f](y)}{x-y} \ud y.
        \end{split}
    \end{align*}
    Then we obtain a bound for $|I_4|$ by using a lemma from Cotlar (see \cite{Torchinsky}, p.291):
    \begin{align*}
        |I_4| \aleq \left[\nrm{\calH[f]}_{C^\vphi}(\nrm{g}_{C^0} + \nrm{\calH[g]}_{C^0}) + \nrm{\calH[g]}_{C^\vphi}(\nrm{f}_{C^0} + \nrm{\calH[f]}_{C^0}) \right] \vphi(h).
    \end{align*}
    Combining the estimates for $I_1$ through $I_4$ yields \eqref{eq:Hfg nrm}. The simpler estimate \eqref{eq:Hfg nrm_simple} is then obtained by Young's inequality and $\nrm{f}_{C^0} \aleq \nrm{\calH[f]}_{C^\vphi}$.
\end{proof}

\subsection{Proof of Lemma \ref{Lem:Hf^n}} \label{Subsec:Hf^n}
\begin{proof}[Proof of Lemma \ref{Lem:Hf^n}]
    For notational convenience, set $\alp_0 := \vphi(\dlt)$. For $n\geq 2$, applying the product estimate \eqref{eq:Hfg nrm} on $f^n = f^{n-1} \cdot f$, we obtain a bound for $\nrm{\calH[f^n]}_{C^\vphi}$:
    \begin{align*}
        \begin{split}
            \nrm{\calH[f^n]}_{C^\vphi} 
            & \aleq \nrm{\calH[f^{n-1}]}_{C^\vphi} ( \nrm{\calH[f]}_{C^0}^{1-\tht}\nrm{\calH[f]}_{C^\vphi}^\tht + \nrm{\calH[f]}_{C^0} ) \\
            & \quad + \nrm{\calH[f]}_{C^\vphi} (\nrm{f^{n-1}}_{C^0} + \nrm{\calH[f^{n-1}]}_{C^0}).
        \end{split}
    \end{align*}
    Using the bounds on $f$ and the bound from \eqref{eq:Hf C^0}, the inequality simplifies to
    \begin{align*}
        \begin{split}
            \nrm{\calH[f^n]}_{C^\vphi}
            & \aleq \nrm{\calH[f^{n-1}]}_{C^\vphi} \times M(\alp_0^{1-2\tht+\tht^2} \cdot \alp_0^{-\tht^2} + \alp_0^{1-\tht}) \\
            & \quad + M\alp_0^{-\tht} ((M\alp_0)^{n-1} + (M\alp_0^{1-\tht})^{n-1}) \\
            & \aleq M \alp_0^{1-2\tht} \nrm{\calH[f^{n-1}]}_{C^\vphi} + \alp_0^{-1} (M\alp_0^{1-\tht})^n.
        \end{split}
    \end{align*}
    To make the dependence on constants explicit, we write
    \begin{align*}
        \nrm{\calH[f^n]}_{C^\vphi} \leq M C_{P,1} \alp_0^{1-2\tht} \nrm{\calH[f^{n-1}]}_{C^\vphi} + C_{P,2} \alp_0^{-1} (M\alp_0^{1-\tht})^n, 
    \end{align*}
    where the constants $C_{P,1}$ and $C_{P,2}$ depend only on $\vphi$. Let $a_n := \nrm{\calH[f^n]}_{C^\vphi}$ and $\alp_1 := C_{P,1}\alp_0^{1-2\tht}$. The recurrence inequality for $a_n$ is
    \begin{align*}
        a_n \leq (M\alp_1) a_{n-1} + C_{P,2} \alp_0^{-1} (M\alp_0^{1-\tht})^n. 
    \end{align*}
    Iterating this inequality, we get
    \begin{align*}
        \begin{split}
            a_n 
            & \leq (M\alp_1) a_{n-1} + C_{P,2} \alp_0^{-1} (M\alp_0^{1-\tht})^n \\
            & \leq (M\alp_1)^2 a_{n-2} + C_{P,2} \alp_0^{-1} (M\alp_0^{1-\tht})^n \left( 1+ \alp_1/\alp_0^{1-\tht}\right) \\
            & \qquad\qquad\qquad\qquad \qquad \quad\,\,\vdots \\
            & \leq (M\alp_1)^{n-1} a_1 + C_{P,2} \alp_0^{-1} (M\alp_0^{1-\tht})^n \left( \sum_{k=0}^{n-2} \left( \alp_1/\alp_0^{1-\tht} \right)^k \right).
        \end{split}
    \end{align*} 
    We bound the two terms separately. From the assumption on $a_1 = \nrm{\calH[f]}_{C^\vphi} \aleq M\alp_0^{-\tht}$, the first term is bounded by
    \begin{align*}
        \begin{split}
            (M\alp_1)^{n-1} a_1 
            & \aleq M^n \alp_0^{-\tht} \alp_1^{n-1} \\
            & \aleq \alp_0^{\tht-1} (M\alp_1)^n.
        \end{split}
    \end{align*}  
    For the second term, we choose $\dlt$ small enough such that $\alp_1 > 2\alp_0^{1-\tht}$, which is equivalent to $\vphi(\dlt)^\tht < C_{P,1}/2$. This ensures the ratio of the geometric series exceeds $2$, so that the sum is bounded by its largest term:
    \begin{align*}
        \begin{split}
            C_{P,2} \alp_0^{-1} (M\alp_0^{1-\tht})^n \left( \sum_{k=0}^{n-2} \left( \frac{\alp_1}{\alp_0^{1-\tht}} \right)^k \right) 
            & \leq C_{P,2} \alp_0^{-1} (M\alp_0^{1-\tht})^n \frac{(\alp_1/\alp_0^{1-\tht})^{n-1}}{\alp_1/\alp_0^{1-\tht}-1} \\
            & \aleq  \alp_0^{-1} M^n \alp_0^{1-\tht} \frac{\alp_1^{n-1}}{\alp_1/\alp_0^{1-\tht}-1} \\
            & \aleq \alp_0^{\tht-1} (M\alp_1)^n.
        \end{split}
    \end{align*}
    Combining the bounds for both terms, we obtain the desired estimate:
    \begin{align*}
        \nrm{\calH[f^n]}_{C^\vphi} \aleq \alp_0^{\tht-1} (M\alp_1)^n. 
    \end{align*}
\end{proof}

\subsection{Proof of Lemma \ref{Lem:Afg}} \label{Subsec:Afg}
\begin{proof}[Proof of Lemma \ref{Lem:Afg}]
    We decompose $\calA[f,g](x)$ by adding and subtracting terms. The integrand can be written as
    \begin{align*}
        f(y)g(x,y) = (f(y)-f(x))(g(x,y)-g(x,x)) + f(x)g(x,y) + (f(y)-f(x))g(x,x).
    \end{align*}
    Integrating each term against the kernel $\frac{1}{\pi(x-y)}$ yields the decomposition:
    \begin{align*}
        \begin{split}
            \calA[f,g](x) & = f(x) \left( \frac{1}{\pi} P.V.\int_\bbR \frac{g(x,y)}{x-y} \ud y \right) + g(x,x) \left( \frac{1}{\pi} P.V.\int_\bbR \frac{f(y)-f(x)}{x-y} \ud y \right) \\
            & \quad + \frac{1}{\pi} \int_\bbR \frac{(f(y)-f(x))(g(x,y)-g(x,x))}{x-y} \ud y \\
            & = f(x) \calH_2[g](x,x) + \calH[f](x)g(x,x) \\
            & \quad + \frac{1}{\pi} \int_\bbR \frac{(f(y)-f(x))(g(x,y)-g(x,x))}{x-y} \ud y.
        \end{split}
    \end{align*}
    The first and second terms are clearly $C^\vphi$. For the last term, we can obtain the $C^0$ bounds by using the compact support condition. Its $C^\vphi$ norm can be estimated by \eqref{eq:prod_cornrm}:
    \begin{align*}
        \left\Vert \int_\bbR \frac{f(y)-f(x)}{x-y} (g(x,y)-g(x,x)) \ud y \right\Vert_{C^\vphi} \aleq \nrm{f}_{C^\vphi}\nrm{g}_{C^\vphi}.
    \end{align*}
    Combining the norm estimates for all three terms, we obtain the desired bound: 
    \begin{align*}
        \nrm{\calA[f,g]}_{C^\vphi} 
        & \aleq \nrm{f \cdot \calH_2[g]}_{C^\vphi} + \nrm{\calH[f] \cdot g}_{C^\vphi} + \nrm{f}_{C^\vphi}\nrm{g}_{C^\vphi} \\
        & \aleq \nrm{f}_{C^\vphi} \nrm{\calH_2[g]}_{C^\vphi} + \nrm{\calH[f]}_{C^\vphi} \nrm{g}_{C^\vphi} + \nrm{f}_{C^\vphi} \nrm{g}_{C^\vphi}.
    \end{align*}
\end{proof}

\subsection{Proof of Lemma \ref{Lem:T}} \label{Subsec:T}
\begin{proof} [Proof of Lemma \ref{Lem:T}]
    First, note that $\calT_j^{n_1,\cdots,n_l}[f]$ is $C^\vphi$ as it is of the form $\calA[f',\Pi\calQ[f]]$. We will only prove the case $j \neq 0$, as the case $j=0$ can be handled similarly.
    
    By Fubini's theorem, we can recast $\calH[\calT_j^{n_1,\dots,n_l}[f]]$ as
    \begin{align*}
        \begin{split}
            \calH[\calT_j^{n_1,\cdots,n_l}[f]](x) 
            & = P.V.\int_\bbR \frac{1}{x-y} P.V.\int_\bbR \frac{f'_j(z)}{y-z} \Pi\calQ [f]^{n_1,\cdots,n_l}(y,z) \ud z\ud y \\
            & = \lim_{\veps \to 0} \iint_{A(x,\veps)} \frac{1}{x-y}\frac{f'_j(z)}{y-z} \Pi\calQ [f]^{n_1,\cdots,n_l}(y,z) \ud z\ud y,
        \end{split}
    \end{align*}
    where $A(x,\veps) = \left\{(y,z) \in \bbR \big| \,|x-y|,|y-z|,|z-x|>\veps\right\}$. Using the partial fraction decomposition on $\frac{1}{(x-y)(y-z)}$, we decompose the integral into two parts:
    \begin{align*}
        \begin{split}
            \calH[\calT_j^{n_1,\cdots,n_l}[f]](x) 
            & = \lim_{\veps \to 0} \iint_{A(x,\veps)} \frac{f'_j (z)}{x-z} \frac{1}{y-z} \Pi\calQ [f]^{n_1,\cdots,n_l}(y,z) \ud z\ud y\\
            & \quad - \lim_{\veps \to 0} \iint_{A(x,\veps)} \frac{f'_j (z)}{x-z} \frac{1}{y-x} \Pi\calQ [f]^{n_1,\cdots,n_l}(y,z) \ud z\ud y\\
            & =: J_1 + J_2.
        \end{split}
    \end{align*} 

    \noindent\textbf{STEP 1: Calculation of $J_1$}

    Applying Fubini's theorem to $J_1$, we have
    \begin{align*}
        \begin{split}
            J_1 & = \lim_{\veps \to 0} \iint_{A(x,\veps)} \frac{f'_j (z)}{x-z} \frac{1}{y-z} \Pi\calQ[f]^{n_1,\cdots,n_l}(y,z) \ud z\ud y \\
            & = \lim_{\veps \to 0} \int_{|x-z|>\veps} \frac{f'_j (z)}{x-z} 
            \int_{|y-z|>\veps} \frac{1}{y-z} \Pi\calQ[f]^{n_1,\cdots,n_l}(y,z) \ud y\ud z.
        \end{split}
    \end{align*}
    We will use the identity for the derivative of the product of difference quotients:
    \begin{align} \label{eq:dv of PiQ}
        \begin{split}
            \frac{1}{y-z} \Pi\calQ[f]^{n_1,\cdots,n_l} (y,z) 
            & = \sum_{k=1}^l \frac{n_k}{n} \frac{f'_k (y)}{y-z} \Pi\calQ[f]^{n_1,\cdots, n_k-1, \cdots ,n_l} (y,z) \\
            & \quad - \frac{1}{n} \frac{\ud}{\ud y} \left[ \Pi\calQ[f]^{n_1,\cdots,n_l} \right] (y,z),
        \end{split}
    \end{align}
    where $n = n_1 + \cdots + n_l$. Applying \eqref{eq:dv of PiQ} to $J_1$ and integrating by parts leads to 
    \begin{align*}
        \begin{split}
            J_1
            & = \sum_{k=1}^l \frac{n_k}{n} \lim_{\veps \to 0} \int_{|x-z|>\veps}\frac{f'_j(z)}{x-z} \int_{|y-z|>\veps}\frac{f'_k (y)}{y-z} \Pi\calQ[f]^{n_1,\cdots, n_k-1, \cdots ,n_l} (y,z) \ud y \ud z\\
            & \quad - \frac{1}{n} \lim_{\veps \to 0} \int_{|x-z|>\veps} \frac{f'_j (z)}{x-z}
            \int_{|y-z|>\veps} \frac{\ud}{\ud y} \left[ \Pi\calQ[f]^{n_1,\cdots,n_l} \right](y,z) \ud y\ud z \\
            & = \sum_{k=1}^l \frac{n_k}{n} \calH[f'_j \cdot \calT_k^{n_1,\cdots,n_k-1,\cdots,n_l}[f]] \\
            & \quad - \frac{1}{n} \lim_{\veps \to 0} \underbrace{\int_{|x-z|>\veps} \frac{f'_j(z)}{x-z}[\Pi\calQ[f]^{n_1,\cdots,n_l}(z+\veps) - \Pi\calQ[f]^{n_1,\cdots,n_l}(z-\veps)]}_{ = : (\diamondsuit)}.
        \end{split}
    \end{align*}
    We now claim that $(\diamondsuit)$ vanishes as $\veps \to 0$. We decompose it into the two terms
    \begin{align*}
        \begin{split}
            (\diamondsuit)
            & = \int_{|x-z|>\veps} \frac{f'_j(z) - f'_j(x)}{x-z}[\Pi\calQ[f]^{n_1,\cdots,n_l}(z+\veps) - \Pi\calQ[f]^{n_1,\cdots,n_l}(z-\veps)] \ud z\\
            & \quad + f'(x)\int_{|x-z|>\veps} \frac{1}{x-z}[\Pi\calQ[f]^{n_1,\cdots,n_l}(z+\veps) - \Pi\calQ[f]^{n_1,\cdots,n_l}(z-\veps)] \ud z \\
            & = : (\diamondsuit_1) + (\diamondsuit_2).
        \end{split}
    \end{align*}
    For the first term, the compact support condition of $f$ and yields
    \begin{align*}
        \begin{split}
            |(\diamondsuit_1)| 
            & \leq \int_{\veps<|x-z|<C} \left|\frac{f'_j(z) - f'_j(x)}{x-z}[\Pi\calQ[f]^{n_1,\cdots,n_l}(z+\veps) - \Pi\calQ[f]^{n_1,\cdots,n_l}(z-\veps)] \right| \ud z \\
            & \leq \nrm{f'_j}_{C^\vphi} \nrm{\Pi\calQ[f]^{n_1,\cdots,n_l}}_{C^\vphi} \vphi(\veps)\int_{\veps<|x-z|<C} \frac{\vphi(|x-z|)}{|x-z|} \ud z \to 0.
        \end{split}
    \end{align*}
    For the second term, we apply \eqref{A3}: $\displaystyle \vphi(\veps) \log(\veps) \to 0$ as $\veps \to 0$, to obtain 
    \begin{align*}
        \begin{split}
            |(\diamondsuit_2)|
            & \leq \nrm{\Pi\calQ[f]^{n_1,\cdots,n_l}}_{C^\vphi} \vphi(\veps) \int_{|x-z|>\veps} \frac{1}{|x-z|}\ud z \\
            & \leq \nrm{\Pi\calQ[f]^{n_1,\cdots,n_l}}_{C^\vphi} \vphi(\veps) \left(\log\frac{C}{\veps}\right) \to 0.
        \end{split}
    \end{align*}

    \noindent\textbf{STEP 2: Calculation of $J_2$} 
    
    Consider the following decomposition of $J_2$: 
    \begin{align*}
        \begin{split}
            J_2 
            & = \lim_{\veps \to 0} \int_{|x-z|>\veps} \frac{f'_j (x)}{x-z} \int_{|y-x|>\veps} \frac{1}{x-y} \Pi\calQ[f]^{n_1,\cdots,n_l}(y,z) \ud y\ud z \\
            & \quad + \lim_{\veps \to 0} \int_{|x-z|>\veps} \frac{f'_j (z)}{x-z} \int_{|y-x|>\veps} \frac{1}{x-y} \Pi\calQ[f]^{n_1,\cdots,n_l}(y,x) \ud y\ud z \\
            & \quad + Remainder \\
            & =: J_{2,1} + J_{2,2} + J_{\mathrm{rem}}.
        \end{split}
    \end{align*}
    Clearly, $J_{2,2} = \calH[f'_j] \calT_0^{n_1,\cdots,n_l}[f]$. For $J_{2,1}$, we apply the partial fraction decomposition to obtain
    \begin{align*}
        \begin{split}
            & \lim_{\veps \to 0} \int_{|x-z|>\veps} \frac{1}{x-z} \int_{|y-z|>\veps} \frac{1}{x-y} \Pi\calQ[f]^{n_1,\cdots,n_l}(y,z) \ud y\ud z \\
            & = \lim_{\veps \to 0} \iint_{A(x,\veps)} \frac{1}{x-z} \frac{1}{y-z} \Pi\calQ[f]^{n_1,\cdots,n_l}(y,z) \ud y\ud z \\
            & \quad - \lim_{\veps \to 0} \iint_{A(x,\veps)} \ \frac{1}{x-y} \frac{1}{y-z} \Pi\calQ[f]^{n_1,\cdots,n_l}(y,z) \ud y\ud z.
        \end{split}
    \end{align*}
    By applying the change of variables $(y,z) \to (z,y)$ on the first term, we have
    \begin{align*}
        J_{2,1} = 
        & = -2 \lim_{\veps \to 0} \iint_{A(x,\veps)} \frac{1}{x-y} \frac{1}{y-z} \Pi\calQ[f]^{n_1,\cdots,n_l}(y,z) \ud y\ud z.
    \end{align*}
    Applying integration by parts and \eqref{eq:dv of PiQ} on the above formula, we conclude that 
    \begin{align*}
        J_{2,1} = -2 f'_j(x) \sum_{k=1}^l \frac{n_k}{n} \calH[\calT_k^{n_1,\cdots,n_k-1,\cdots,n_l}[f]].
    \end{align*}
    Note that $J_{\mathrm{rem}}$ is given by
    \begin{align*}
        J_{\mathrm{rem}} = P.V.\int_\bbR \frac{f'_j(x) - f'_j (z)}{x-z} (\calN(x,x) - \calN(x,z)) \ud z,
    \end{align*}
    where
    \begin{align*}
        \begin{split}
            \calN(x,z) & = P.V.\int_\bbR \frac{1}{x-y} \Pi\calQ[f]^{n_1,\cdots,n_l}(y,z) \ud y \\
            & = \calH_1[\Pi\calQ[f]^{n_1,\cdots,n_l}] (x,z).
        \end{split}
    \end{align*}
    By \eqref{eq:prod_cornrm}, to guarantee that $J_{\mathrm{rem}} \in C^\vphi$, it suffices to check that $\calN(x,z)$ satisfies $C^\vphi$. Since each $\calQ[f_l] \in C^\vphi$ and $\calH_1[\calQ[f_l]] = \calQ[\calH[f_l]] \in C^\vphi$, we conclude that $\calN = \calH_1[\Pi \calQ [f]^{n_1,\cdots,n_l}]$ is in $C^\vphi$.
\end{proof}

\subsection{Proof of Lemma \ref{Lem:T2}} \label{Subsec:T2}
\begin{proof} [Proof of Lemma \ref{Lem:T2}]
    In this proof, we also denote $\vphi(\dlt)$ as $\alp_0$. We further assume that $\dlt$ is small enough so that $\vphi(\dlt)^\tht < C_{P,1}/2$ and thus the product estimate \eqref{eq:Hf^n nrm} holds for $\calQ[f_j]$:
    \begin{align*}
        \nrm{\calH[\calQ[f_j]^n]}_{C^\vphi} \aleq \vphi(\dlt)^{\tht-1} (M\alp_1)^n,
    \end{align*}
    where $\alp_1 = C_{P,1} \alp_0^{1-2\tht}$. 
    
    \noindent\textbf{STEP 1: Estimates of $\calT_j^{n_1,n_2}[f]$} 
    
    We begin by establishing the estimates for $\calT_0^{n_1,n_2}[f]$, which is the simplest case. Observe that $\calT_0^{n_1,n_2}[f](x) = \calH_2[\Pi\calQ[f]^{n_1,n_2}](x,x)$. Since $f$ and $\calQ[f]$ are compactly supported, we apply \eqref{eq:Hf C^0} and \eqref{eq:Hf^n nrm} to obtain the $C^0$ and $C^\vphi$ bounds respectively:
    \begin{align*}
        \begin{split}
            \nrm{\calT_0^{n_1,n_2}[f]}_{C^0} 
            & \aleq \nrm{\Pi\calQ[f]^{n_1,n_2}}_{C^0}^{1-\tht} \nrm{\Pi\calQ[f]^{n_1,n_2}}_{C^\varphi}^\tht \\
            & \aleq (\nrm{f'}_{C^0}^{n_1+n_2})^{1-\tht} (\nrm{f'}_{C^0}^{n_1+n_2-1}\nrm{f'}_{C^\varphi})^\tht \\
            & \aleq M^{n_1+n_2} \cdot \alp_0^{(n_1+n_2)(1-\tht)} \cdot \alp_0^{(n_1+n_2-1)\tht} \\
            & = \alp_0^{-\tht} (M\alp_0)^{n_1+n_2},
        \end{split}
    \end{align*}
    and
    \begin{align*}
        \begin{split}
            \nrm{\calT_0^{n_1,n_2}[f]}_{C^\vphi} 
            & = \nrm{\calH_1 [\Pi\calQ[f]^{n_1,n_2}]}_{C^\vphi} \\
            & \aleq \nrm{\calH_1 [\calQ[f_1]^{n_1}]}_{C^\vphi} \nrm{\calH_1 [\calQ[f_2]^{n_2}]}_{C^\vphi} \\
            & \aleq \alp_0^{2(\tht-1)} (M\alp_1)^{n_1+n_2}.
        \end{split}
    \end{align*}
    We now estimate the norms of $\calT_j^{n_1,n_2}[f]$ for $j = 1,2$. Since $\calT$ is of the form $\calA[f',\Pi\calQ[f]]$, we apply the formula \eqref{eq:Afg nrm}: 
    \begin{align*}
        \begin{split}
            \nrm{\calT_j^{n_1,n_2}[f]}_{C^0} 
            & \aleq \nrm{f'_j}_{C^0} \nrm{\calH_2[\Pi\calQ[f]^{n_1,n_2}]}_{C^0} + \nrm{\calH[f'_j]}_{C^0} \nrm{\Pi\calQ[f]^{n_1,n_2}}_{C^0} \\
            & \quad + \nrm{f'_j}_{C^0} \nrm{\Pi\calQ[f]^{n_1,n_2}}_{C^0} \\ 
            & \aleq M\alp_0 \cdot \alp_0^{-\tht} (M\alp_0)^{n_1+n_2} + M\alp_0^{1-\tht} \cdot (M\alp_0)^{n_1+n_2} + (M\alp_0)^{n_1+n_2+1} \\
            & \aleq \alp_0^{-\tht} (M\alp_0)^{n_1+n_2+1},
        \end{split}
    \end{align*}
    and
    \begin{align*}
        \begin{split}
            \nrm{\calT_j^{n_1,n_2}[f]}_{C^\vphi} 
            & \aleq \nrm{f'_j}_{C^\vphi} \nrm{\calH_2[\Pi\calQ[f]^{n_1,n_2}]}_{C^\vphi} + \nrm{\calH[f'_j]}_{C^\vphi} \nrm{\Pi\calQ[f]^{n_1,n_2}}_{C^\vphi} \\
            & \quad + \nrm{f'_j}_{C^\vphi} \nrm{\Pi\calQ[f]^{n_1,n_2}}_{C^\vphi} \\ 
            & \aleq M\cdot\alp_0^{2(\tht-1)} (M\alp_1)^{n_1+n_2} + M\alp_0^{-\tht} \cdot \alp_0^{-1}(M\alp_0)^{n_1+n_2} + M\cdot\alp_0^{-1}(M\alp_0)^{n_1+n_2} \\
            & \aleq \alp_0^{4\tht-3} (M\alp_1)^{n_1+n_2+1}.
        \end{split} 
    \end{align*}

    \noindent\textbf{STEP 2: $C^0$ estimates of $\calH[\calT_j^{n_1,n_2}[f]]$} 

    Since $\calT_j^{n_1,n_2}[f]$ is not compactly supported, we cannot apply the interpolation inequality \eqref{eq:Hf C^0} directly to obtain the bounds on $\nrm{\calH[\calT_j^{n_1,n_2}]}_{C^0}$. Hence we obtain its bound inductively using the formulas \eqref{eq:T formula j=0} and \eqref{eq:T formual j!=0}. For $n \geq 1$, define the sequences $b_n$ and $c_n$ as 
    \begin{align*}  
        \begin{split}
            b_n := \max_{n_1+n_2 = n} \nrm{\calH[\calT_0^{n_1,n_2}]}_{C^0} \\
            c_n := \max_{\substack{n_1+n_2+1 =n \\ j=1,2}} \nrm{\calH[\calT_j^{n_1,n_2}]}_{C^0}.
        \end{split}
    \end{align*}
    Then from the formula \eqref{eq:T formula j=0}, we obtain that $b_n \leq c_n$. For the bounds on $c_n$, we first recall the formula for $\calH[\calT_j^{n_1,n_2}]$ for $n_1,n_2 \geq 1$:
    \begin{align*}
        \begin{split}
            \calH[\calT_j^{n_1,n_2}[f]] 
            & = \underbrace{\frac{n_1}{n} \calH[f'_j \cdot \calT_1^{n_1-1,n_2}[f]] + \frac{n_2}{n} \calH[f'_j \cdot \calT_2^{n_1,n_2-1}[f]]}_{=:(\clubsuit_1)} \\
            & \quad \underbrace{-2f'_j \left[ \frac{n_1}{n} \calH[\calT_1^{n_1-1,n_2}[f]] + \frac{n_2}{n} \calH[\calT_1^{n_1,n_2-1}[f]] \right]}_{=:(\clubsuit_2)} + \underbrace{\calH[f'_j]\calT_0^{n_1,n_2}[f]}_{=:(\clubsuit_3)} \\
            & \quad + J_{\mathrm{rem}},
        \end{split}   
    \end{align*} 
    where
    \begin{align*}
        \begin{cases}
            & J_{\mathrm{rem}} = P.V.\int_\bbR \frac{f'_j(x) - f'_j (z)}{x-z} (\calN(x,x) - \calN(x,z)) \ud z \\
            & \calN(x,z) = \calH_1[\Pi\calQ[f]^{n_1,n_2}](x,z).
        \end{cases}
    \end{align*}
    Since the cases of $n_1 = 0$ and $n_2=0$ can be treated similarly, we will present the bounds for $n_1,n_2 \geq 1$. From the formula, we observe that the terms $f'_j \cdot \calT_k^{\cdots,n_k-1,\cdots}[f]$ are compactly supported. Hence \eqref{eq:Hf C^0} is applicable to $(\clubsuit_1)$ and it yields
    \begin{align*}
        \begin{split}
            \nrm{(\clubsuit_1)}_{C^0} 
            & \aleq \nrm{f'_j \cdot \calT_k^{\cdots,n_k-1\cdots} [f]}_{C^0}^{1-\tht} \nrm{f'_j \cdot \calT_k^{\cdots,n_k-1\cdots} [f]}_{C^\vphi}^\tht \\
            & \aleq [ M\alp_0 \cdot \alp_0^{-\tht}(M\alp_0)^{n_1+n_2} ]^{1-\tht} [ M \cdot \alp_0^{4\tht-3} (M\alp_1)^{n_1+n_2} ]^\tht \\
            & \aleq \alp_0^{\tht(7\tht-5)} (M\alp_0^{1-\tht}\alp_1^\tht)^{n_1+n_2+1} \\
            & \aleq \alp_0^{\tht(7\tht-5)} (M\alp_2)^{n_1+n_2+1},
        \end{split}
    \end{align*}
    where $\alp_2 := \vphi(\dlt)^{1-\tht}\alp_1^\tht$. We note that for $\dlt$ small enough, $\alp_2$ is of order $\vphi(\dlt)^{1-2\tht^2}$ and $\alp_0<\alp_2<\alp_1$. For the other terms, we can verify that
    \begin{align*}
        \begin{split}
            \nrm{(\clubsuit_2)}_{C^0} + \nrm{(\clubsuit_3)}_{C^0}  
            & \aleq M\alp_0 \cdot c_{n_1+n_2} + M\alp_0^{1-\tht} \cdot \alp_0^{-\tht}(M\alp_0)^{n_1+n_2} \\
            & \aleq M\alp_0 \cdot c_{n_1+n_2} + \alp_0^{-2\tht} (M\alp_0)^{n_1+n_2+1}.
        \end{split}
    \end{align*}
    For the remainder term, \eqref{eq:Hf C^0} leads to 
    \begin{align*}
        \begin{split}
            \nrm{J_{\mathrm{rem}}}_{C^0} 
            & \aleq \nrm{f'_j}_{C^0}^{1-\tht}\nrm{f'_j}_{C^\vphi}^\tht\nrm{\calN}_{C^0}^{1-\tht}\nrm{\calN}_{C^\vphi}^\tht \\
            & \aleq (M\alp_0)^{1-\tht} \cdot M^\tht \cdot (\alp_0^{-\tht}(M\alp_0)^{n_1+n_2})^{1-\tht} (\alp_0^{2(\tht-1)}(M\alp_1)^{n_1+n_2})^\tht \\ 
            & \aleq \alp_0^{\tht(5\tht-4)} (M\alp_2)^{n_1+n_2+1}.
        \end{split} 
    \end{align*}
    Combining these estimates, we obtain the following recurrence inequality:
    \begin{align*}
        c_{n_1+n_2+1} \aleq M\alp_0 \cdot c_{n_1+n_2} + (\alp_0^{\tht(5\tht-4)}+\alp_0^{\tht(7\tht-5)}) (M\alp_2)^{n_1+n_2+1} + \alp_0^{-2\tht} (M\alp_0)^{n_1+n_2+1}.
    \end{align*}
    Since $0 > \tht(5\tht-4) > \tht(7\tht-5)$ and $\alp_2>\alp_0$, we can eliminate the two low-order terms. Finally, we have 
    \begin{align*}
        c_{n} \leq C_{T_2,1} M \alp_0 \cdot c_{n-1} + C_{T_2,2} \alp_0^{\tht(7\tht-5)} (M\alp_2)^{n},
    \end{align*} 
    for any $n > 1$, where $C_{T_2,1}$ and $C_{T_2,2}$ are constants that depend only on $\vphi$. By using a method similar to that in the proof of Lemma \ref{Lem:Hf^n}, we obtain the bounds on $c_n$ inductively.
    \begin{align*}
        \begin{split}
            c_n 
            & \leq (M C_{T_2,1}\alp_0) c_{n-1} + C_{T_2,2} \alp_0^{\tht(7\tht-5)} (M\alp_2)^{n} \\
            & \leq (M C_{T_2,1}\alp_0)^{n-1}c_1 + C_{T_2,2} \alp_0^{\tht(7\tht-5)} (M\alp_2)^n \left( \sum_{k=0}^{n-2} \left( \frac{C_{T_2,1\alp_0}}{\alp_2} \right)^k \right)
        \end{split}
    \end{align*}
    We further assume that $\dlt$ is small enough so that $2C_{T_{2,1}}\alp_0<\alp_2$. Then since the $a_2\ageq a_0$, we have That
    \begin{align*}
        c_n \leq C_{T_2} \alp_0^{\tht(7\tht-5)} (M\alp_2)^n
    \end{align*}

    \noindent\textbf{STEP 3: $C^\vphi$ estimate of $\calH[\calT_j^{n_1,n_2}[f]]$} 
    
    In this step, we also assuem that $n_1,n_2 \geq 1$. Let $\displaystyle d_n = \max_{j=1,2}\nrm{\calH[\calT_j^{n_1,n_2}[f]]}_{C^\vphi}$. For $(\clubsuit_1)$, we apply \eqref{eq:Hfg nrm} to obtain 
    \begin{align*}
        \begin{split}
            \nrm{(\clubsuit_1)}_{C^\vphi} 
            & \aleq d_{n_1+n_2} (\nrm{\calH[f'_j]}_{C^0}^{1-\tht} \nrm{\calH[f'_j]}_{C^\vphi}^\tht + \nrm{\calH[f'_j]}_{C^0}) + \nrm{\calH[f'_j]}_{C^\vphi}( \nrm{\calT_{\cdots}[f]}_{C^0} \\
            & \quad + \nrm{\calH[\calT_{\cdots}[f]]}_{C^0}) \\
            & \aleq d_{n_1+n_2}(M\alp_0^{1-2\tht}+M\alp_0^{1-\tht}) + M\alp_0^{-\tht} ( \alp_0^{-\tht} (M\alp_0)^{n_1+n_2} + \alp_0^{\tht(7\tht-5)} (M\alp_2)^{n_1+n_2}) \\
            & \aleq M\alp_0^{1-2\tht} \cdot d_{n_1+n_2} + \alp_0^{\tht(7\tht-5)} (M\alp_2)^{n_1+n_2} \\
            & \aleq M\alp_1 \cdot d_{n_1+n_2} + \alp_0^{\tht(7\tht-5)} (M\alp_2)^{n_1+n_2}.
        \end{split}
    \end{align*}
    For the second term, using the algebra properties of $C^\vphi$ we obtain 
    \begin{align*}
        \begin{split}
            \nrm{(\clubsuit_2)}_{C^\vphi} 
            & \leq \nrm{f'_j}_{C^0} \cdot d_{n_1+n_2} + \nrm{f'_j}_{C^\vphi} \nrm{\calH[\calT_{\cdots}[f]]}_{C^0} \\
            & \aleq M\alp_0 \cdot d_{n_1+n_2} + M \cdot \alp_0^{\tht(7\tht-5)} (M\alp_2)^{n_1+n_2}.
        \end{split}
    \end{align*}
    For the third term, we estimate
    \begin{align*}
        \begin{split}
            \nrm{(\clubsuit_3)}_{C^\vphi} 
            & \leq \nrm{\calH[f'_j]}_{C^\vphi} \nrm{\calT_0^{n_1,n_2}[f]}_{C^\vphi} \\
            & \aleq M\alp_0^{-\tht} \cdot \alp_0^{2\tht-2} (M\alp_1)^{n_1+n_2}.
        \end{split}
    \end{align*}
    For the remainder term, \eqref{eq:prod_cornrm} yields
    \begin{align*}
        \begin{split}
            \nrm{J_{\mathrm{rem}}}_{C^\vphi} 
            & \aleq \nrm{f'_j}_{C^\vphi} \nrm{\calN}_{C^\vphi} \\
            & \aleq M \cdot \alp_0^{2\tht-2} (M\alp_1)^{n_1+n_2}.
        \end{split}
    \end{align*}
    Combining these estimates and after eliminating the low-order terms, we have the following recurrence inequality for $d_n$:
    \begin{align*}
        d_n \aleq M\alp_1 \cdot d_{n-1} + \alp_0^{C(\tht)} (M\alp_1)^n.
    \end{align*}
    By applying a similar argument, we finally obtain
    \begin{align*}
        d_n \leq C_{T_2} \alp_0^{C(\tht)} (M\alp_1)^n.
    \end{align*}
    for some $C(\tht)$.
\end{proof}

\subsection{Proof of Lemma \ref{Lem:T6}} \label{Subsec:T6}
\begin{proof} [Proof of Lemma \ref{Lem:T6}]
    First, we take $\dlt <1$ small enough such that 
    \begin{align*}
        \nrm{\calH[\calQ[f_k]^n]}_{C^\vphi} \leq (M\alpha_1)^n .
    \end{align*}

    \noindent \textbf{CASE 1: $j\in\{3,4,5,6\}$, $n_1 = n_2 = 0$} 
    
    One can use a similar method to the proof of Lemma \ref{Lem:T2}. Note that this case coincides with the case where $l=4$.
    \begin{align}
        \nrm{\calH[\calT_j^{0,0,n_3,\cdots,n_6}[f]]}_{C^\vphi} \leq (M\alpha_1)^n 
    \end{align}

    \noindent \textbf{CASE 2: $j\in\{1,2\}$, $n_1 = n_2 = 0$}

    In this step, denote $n=n_3+\cdots+n_6$. From the formula \eqref{eq:T formual j!=0}, we have that 
    \begin{align*}
        \begin{split}
            \calH[\calT_j^{0,0,n_3,\cdots,n_6}[f]]
            & = \sum_{k=3}^6 \frac{n_k}{n} \calH[f'_j \cdot \calT_k^{0,0,\cdots,n_k-1,\cdots}[f]]\\
            & \quad - 2f'_j \sum_{k=3}^6 \frac{n_k}{n} \calH[\calT_k^{0,0,\cdots,n_k-1,\cdots}[f]] + \calH[f'_j] \calT_0^{0,0,n_3,\cdots,n_6}[f] \\
            & \quad + J_{\mathrm{rem}}.
        \end{split}
    \end{align*}
    For the first term, use \eqref{eq:Hfg nrm_simple}
    \begin{align*}
        \begin{split}
            \nrm{(\clubsuit_1)}_{C^\vphi}
            & \leq \nrm{\calH[f'_j]}_{C^\vphi} \left( \sum_{k=3}^6 \frac{n_k}{n} \nrm{\calH[\calT_k^{0,0,\cdots,n_k-1,\cdots}[f]]}_{C^\vphi} \right) \\
            & \aleq \rho \alp_0^{-\tht} \cdot \alp_0^{C(\tht)} (M\alp_1)^n \\
            & \aleq \rho \alp_0^{C(\tht)} (M\alp_1)^{n}
        \end{split}
    \end{align*}
    and for the second term, use the algebra properties of $C^\vphi$ norm and obtain that
    \begin{align*}
        \begin{split}
            \nrm{(\clubsuit_2)}_{C^\vphi}
            & \aleq \nrm{f'_j}_{C^\vphi} \left( \sum_{k=3}^6 \frac{n_k}{n} \nrm{\calH[\calT_k^{0,0,\cdots,n_k-1,\cdots}[f]]}_{C^\vphi} \right) \\
            & \aleq \rho \cdot \alp_0^{C(\tht)} (M\alp_1)^n \\
            & \aleq \rho \alp_0^{C(\tht)} (M\alp_1)^{n}
        \end{split}
    \end{align*}
    For the third term, we have that
    \begin{align*}
        \begin{split}
            \nrm{(\clubsuit_3)}_{C^\vphi}
            & \aleq \rho \alp_0^{-\tht} \cdot \alp_0^{C(\tht)} (M\alp_1)^{n}\\
            & \aleq \rho \alp_0^{C(\tht)} (M\alp_1)^{n}.
        \end{split}
    \end{align*}
    For $J_{\mathrm{rem}}$,
    \begin{align*}
        \begin{split}
            \nrm{J_{\mathrm{rem}}}_{C^\vphi}
            & \aleq \nrm{f'_j}_{C^\vphi} \nrm{\calN}_{C^\vphi} \\
            & \aleq C(M) \rho \alpha_1^n.
        \end{split}
    \end{align*}
    To sum up, we get
    \begin{align*}
        \nrm{\calH[\calT_j^{0,0,n_3,\cdots,n_6}[f]]}_{C^\vphi} \leq \rho \alp_0^{C(\tht)} (M\alp_1)^{n}.
    \end{align*}

    \noindent \textbf{CASE 3: $j\in\{3,4,5,6\}$ and $(n_1,n_2) \in \{(1,0),(0,1)\}$}
    Without loss of generality, let $n_1 = 1$ and $n_2 = 0$. Also, we denote $n = n_3+\cdots+n_6$. We first check that
    \begin{align*}
        \begin{split}
            \nrm{\calT_j^{1,0,n_3,\cdots,n_6}[f]}_{C^\vphi}
            & \aleq \nrm{\calH[f'_j]}_{C^\vphi} \nrm{\Pi\calQ[f]^{1,0,n_3,\cdots,n_6}}_{C^\vphi} \\
            & \quad + \nrm{f'_j}_{C^\vphi} \nrm{\calH[\Pi\calQ[f]^{1,0,n_3,\cdots,n_6}]}_{C^\vphi} \\
            & \quad + \nrm{f'_j}_{C^\vphi} \nrm{\Pi\calQ[f]^{1,0,n_3,\cdots,n_6}}_{C^\vphi} \\
            & \aleq \nrm{\calH[f'_j]}_{C^\vphi} \nrm{\calQ[f_1]}_{C^\vphi} \nrm{\Pi\calQ[f]^{0,0,n_3,\cdots,n_6}}_{C^\vphi} \\
            & \quad + \nrm{f'_j}_{C^\vphi} \nrm{\calH[\calQ[f_1]]}_{C^\vphi} \nrm{\calH[\Pi\calQ[f]^{0,0,n_3,\cdots,n_6}]}_{C^\vphi} \\
            & \quad + \nrm{f'_j}_{C^\vphi} \nrm{\calQ[f_1]}_{C^\vphi} \nrm{\Pi\calQ[f]^{0,0,n_3,\cdots,n_6}}_{C^\vphi} \\
            & \aleq \rho \alp_0^{C(\tht)} (M\alp_1)^{n+1}
        \end{split}
    \end{align*}
    By using the formula \eqref{eq:T formual j!=0}, we have the following expression:
    \begin{align*}
        \begin{split}
            \calH[\calT_j^{1,0,n_3,\cdots,n_6}[f]] 
            & = \underbrace{\frac{1}{n+1}\calH[f'_j \cdot \calT_1^{0,0,n_3,\cdots,n_6}[f]]}_{=:(\spadesuit_1)} 
            + \underbrace{\sum_{k=3}^6 \frac{n_k}{n+1} \calH[f'_j \cdot \calT_k^{1,0,\cdots,n_k-1,\cdots}[f]]}_{=:(\spadesuit_2)} \\
            & \quad - \underbrace{\frac{2}{n+1} f'_j \calH[\calT_1^{0,0,n_3,\cdots,n_6}[f]]}_{=:(\spadesuit_3)} 
            - \underbrace{2f'_j  \sum_{k=3}^6 \frac{n_k}{n+1} \calH[\calT_k^{1,0,\cdots,n_k-1,\cdots}[f]]}_{=:(\spadesuit_4)} \\
            & \quad + \underbrace{\calH[f'_j] \calT_0^{1,0,n_3,\cdots,n_6}[f]}_{=:(\spadesuit_5)} + J_{\mathrm{rem}}
        \end{split}
    \end{align*}
    Define the sequence $a_n$ as 
    \begin{align}
        a_n = \max_{\substack{j = 3,4,5,6 \\ 2 + n_3+\cdots+n_6 = n} } \nrm{\calH[\calT_j^{1,0,n_3,\cdots,n_6}][f]}_{C^\vphi} \qquad \text{for $n\geq 2$}
    \end{align}
    We can obtain the bounds for the first term by using the algebra properties of $C^\vphi$.  
    \begin{align*}
        \begin{split}
            \nrm{(\spadesuit_1)}_{C^\vphi}
            & \aleq \frac{1}{n+1} \nrm{\calH[f'_j]}_{C^\vphi} \nrm{\calH[\calT_1^{0,0,n_3,\cdots,n_6}]}_{C\vphi} \\
            & \aleq \frac{1}{n+1} \rho \alp_0^{-\tht} \cdot \alp_0^{C(\tht)} (M\alp_1)^{n+1} \\
            & \aleq \rho \alp_0^{C(\tht)} (M\alp_1)^{n+1}
        \end{split}
    \end{align*}
    For the second term, we have 
    \begin{align*}
        \begin{split}
            \nrm{(\spadesuit_2)}_{C^\vphi}
            & \aleq \nrm{f'_j}_{C^0} \left( \sum_{k=3}^6 \frac{n_k}{n+1} \nrm{\calH[\calT_k^{1,0,\cdots,n_k-1,\cdots}[f]]}_{C^\vphi} \right) \\
            & \quad + \nrm{f'_j}_{C^\vphi} \left( \sum_{k=3}^6 \frac{n_k}{n+1} \nrm{\calH[\calT_k^{1,0,\cdots,n_k-1,\cdots}[f]]}_{C^0} \right) \\
            & \aleq (M\alp_0) a_{n+1} + M \cdot \left( \sum_{k=3}^6 \frac{n_k}{n+1} \nrm{\calH[\calT_k^{1,0,\cdots,n_k-1,\cdots}[f]]}_{C^0} \right)
        \end{split}
    \end{align*}
    For the third term, we have that
    \begin{align*}
        \nrm{(\spadesuit_3)}_{C^\vphi}
        & \aleq \nrm{f'_j}_{C^\vphi} \nrm{\calH[\calT_1^{0,0,n_3,\cdots,n_6}]}_{C^\vphi} \\
        & \aleq M \cdot \rho \alp_0^{C(\tht)} (M\alp_1)^{n} \\
        & \aleq \rho \alp_0^{C(\tht)} (M\alp_1)^{n+1}
    \end{align*}
    For the fourth term, 
    \begin{align*}
        \begin{split}
            \nrm{(\spadesuit_4)}_{C^\vphi} 
            & \aleq \nrm{f'_j}_{C^0} \left( \sum_{k=3}^6 \frac{n_k}{n+1} \nrm{\calH[\calT_k^{1,0,\cdots,n_k-1,\cdots}[f]]}_{C^\vphi} \right) \\
            & \quad + \nrm{f'_j}_{C^\vphi} \left( \sum_{k=3}^6 \frac{n_k}{n+1} \nrm{\calH[\calT_k^{1,0,\cdots,n_k-1,\cdots}[f]]}_{C^0} \right) \\
            & \aleq (M\alp_0) a_{n+1} + M \cdot \left( \sum_{k=3}^6 \frac{n_k}{n+1} \nrm{\calH[\calT_k^{1,0,\cdots,n_k-1,\cdots}[f]]}_{C^0} \right)
        \end{split}
    \end{align*} 
    For the fifth term, 
    \begin{align*}
        \begin{split}
            \nrm{(\spadesuit_5)}_{C^\vphi} 
            & \aleq \nrm{\calH[f'_j]}_{C^\vphi} \nrm{\calT_0^{1,0,n_3,\cdots,n_6}}_{C^\vphi} \\
            & \aleq M \cdot \rho\alp_0^{C(\tht)} (M\alp_1)^{n} \\
            & \aleq \rho \alp_0^{C(\tht)} (M\alp_1)^{n+1}
        \end{split}
    \end{align*}
    Lastly, we have the estimates on the remainder term.
    \begin{align*}
        \begin{split}
            \nrm{J_{\mathrm{rem}}}_{C^\vphi} 
            & \aleq \nrm{f'_j}_{C^\vphi} \nrm{\calH_1[\Pi\calQ[f]^{1,0,n_3,\cdots,n_6}]}_{C^\vphi} \\
            & \aleq M \cdot \nrm{\calH_1[\calQ[f_1]]}_{C^\vphi} \cdot \nrm{\calH_1[\Pi\calQ[f]]^{0,0,n_3,\cdots,n_6}}_{C^\vphi} \\
            & \aleq M \cdot \rho \alp_0^{-\tht} \cdot  \alp_0^{C(\tht)} (M\alp_0)^n \\
            & \aleq \rho \alp_0^{C(\tht)} (M\alp_1)^{n+1}
        \end{split}
    \end{align*}
    In summary, we obtain the following inequality for $a_n$:
    \begin{align*}
        a_{n+2} \aleq (M\alp_0) a_{n+1} + \rho \alp_0^{C(\tht)} (M\alp_0)^{n+1}
    \end{align*}
    We also have the initial condition $a_2$ by using that $\calT_j^{1,0,0\cdots,0}[f] = \calA[f'_j,\calQ[f_1]]$ : 
    \begin{align*}
        \begin{split}
            a_1 
            & = \calA[f'_j,\calQ[f_1]] \\
            & \aleq \nrm{\calH[f'_j]}_{C^\vphi} \nrm{\calQ[f_1]}_{C^\vphi} + \nrm{f'_j}_{C^\vphi} \nrm{\calH_2[\calQ[f_1]]}_{C^\vphi} + \nrm{f'_j}_{C^\vphi} \nrm{\calQ[f_1]}_{C^\vphi} \\
            & \aleq \rho M^1 \alp_0^{C(\tht)}
        \end{split}
    \end{align*}
    Finally we obtain the desired bounds on $a_n$: 
    \begin{align*}
        a_n \aleq \rho \alp_0^{C(\tht)} (M\alp_1)^{n+1}
    \end{align*}
\end{proof}
\addtocontents{toc}{\protect\setcounter{tocdepth}{2}}

\section{On the Global Well-Posedness} \label{Sec:Discussions} 


    The problem of global well-posedness for vortex patches in $C^{1,\vphi}$ currently seems to be out of reach. In this section, we discuss this topic, starting with the two seminal proofs of global well-posedness for classical vortex patches: Bertozzi-Constantin \cite{BC93} and Chemin \cite{Che93}.
    
    We first outline the approach from \cite{BC93} and highlight the difficulties in applying it to the $C^{1,\vphi}$ setting. Their proof for $C^{1,\alp}$ vortex patches, following the notation in \cite{MB02}, relies on four key ingredients:
    \begin{itemize}
        \item Local well-posedness of $C^{1,\alp}$.
        \item A characterization of the patch solution $\omg(t) = \mathbbm{1}_{\Omg(t)}$ via a level set function $\calL(t) \in C^{1,\alp}(\bbR^2)$ such that 
        \begin{align*}
            \Omg(t) = \{ x \in \bbR^2 \mid \calL(t,x) > 0 \} \quad \forall t \geq 0,
        \end{align*} 
        which satisfies the transport equation
        \begin{align} \label{eq:evol of L}
            \rd_t\calL + u \cdot \nabla \calL = 0.
        \end{align} 
        \item A time-frozen, a priori Lipschitz estimate for the velocity $u$
        \begin{align*}
            \nrm{\nabla u}_{L^\infty} \aleq C(1+\log(1+\dlt)),
        \end{align*}
        derived from the \textit{Geometric Lemma}
        \begin{align*}
            |R_\rho(x)| \aleq \left( \frac{d(x,\rd\Omg)}{\rho} + \left( \frac{\rho}{\dlt|x|} \right)^\alp \right).
        \end{align*}
        \item A corresponding time-frozen $C^\alp$ bound on the directional derivative $\nabla u \cdot \nabla^\perp \calL$
        \begin{align*}
            \nrm{\nabla u \cdot \nabla^\perp \calL}_{C^\alp} \aleq \nrm{\nabla u}_{L^\infty} \nrm{\nabla^\perp \calL}_{C^\alp}.
        \end{align*}
    \end{itemize}
    For a detailed explanation, we refer the reader to Chapter 8.3.3 of \cite{MB02} and Chapter 2.3 of \cite{ElgindiJeongMAMS}. A direct application of this strategy to the $C^{1,\vphi}$ space faces two fundamental difficulties. The first, which we will now discuss, is the instantaneous loss of regularity.

    This loss of regularity for the boundary complicates the local well-posedness argument for the level set function $\calL$. To illustrate, we begin by introducing the $C^{1,\alp}$ case. In this setting, a local solution $\Omg(t)$ remains a $C^{1,\alp}$ domain, ensuring the corresponding velocity $u$ remains Lipschitz. Since $\nabla^\perp \calL$ satisfies  
    \begin{align*}
        \rd_t \nabla^\perp \calL + u\cdot\nabla(\nabla^\perp \calL) = \nabla u \cdot \nabla^\perp \calL,    
    \end{align*}
    applying the closed directional $C^{\alp}$ bound on the right-hand side allows one to show that the solution $\calL$ to \eqref{eq:evol of L} remains in $C^\alp$ locally in time.

    In $C^{1,\vphi}$ cases, however, our local existence result (Theorem \ref{Thm2}) establishes that an initial $C^{1,\vphi}$ domain $\Omg_0  $ evolves into a $C^{1,\tld{\vphi}}$ domain for $t>0$ where $\tld{\vphi}$ is the induced modulus from \eqref{eq:def_induced modulus}. Due to this instantaneous loss of regularity, we cannot guarantee that the solution $\calL$ of \eqref{eq:evol of L} remains $C^{1,\vphi}$ even if the two time-frozen estimates are available for the $C^{1,\vphi}$ domain. This precludes a direct extension of the level set function approach. 
    
    In the spirit of the \eqref{eq:HCDE}, one might try to overcome this difficulties by considering a system with an additional function $\calK[\calL]$, where $\calK$ is a singular integral operator such as $\calK = \calH_1\calH_2$:
    \begin{align*}
        \begin{cases}
            & \rd_t \calL + u \cdot \nabla \calL = 0 \\
            & \rd_t \calK[\calL] + \calK[u \cdot \nabla \calL] = 0.
        \end{cases}
    \end{align*}
    However, the two-dimensional geometric interpretation of $\calH[\gma]$ and the relationship between $\calH[\gma]$ and an auxiliary function $\calK[\calL]$ remain unclear. Therefore, this approach also faces significant obstacles.

    

    The second fundamental difficulty, which is linked to the unboundedness of the singular integral operators, arises when examining the time-frozen estimates. Let the vorticity be $\omg = \mathbbm{1}_\Omg$, with a corresponding level set function $\calL$ and velocity $u = -\nabla^\perp (-\Dlt)^{-1} \omg$. A Lipschitz estimate for the velocity can indeed be obtained. Specifically, one can establish a geometric lemma:
    \begin{align*}
        H^1[R_\rho (x_0)] \aleq_\vphi \left( \frac{d(x_0)}{\rho} + \vphi\left( \frac{\rho}{\dlt} \right)\right),
    \end{align*}
    where $\dlt$ is a length scale defined by $\vphi(\dlt) = \frac{|\nabla\calL|_{L^\infty}}{|\nabla\calL|_{C^\vphi}}$. Using this, we obtain the following velocity estimate:
    \begin{align*}
        \nrm{\nabla u}_{L^\infty} \aleq C(1+ \log (1 + \dlt)). 
    \end{align*}
    However, the difficulty arises when attempting to obtain the directional $C^\vphi$ bound. What we can ultimately obtain is the following:
    \begin{align*}
        \nrm{\nabla u \cdot \nabla^\perp \calL}_{C^{\tld{\vphi}}} \aleq \nrm{\nabla u}_{L^\infty} \nrm{\nabla^\perp \calL}_{C^{\vphi}}.
    \end{align*}
    That is, due to the unboundedness of singular integral, we cannot obtain the bounds on $\nrm{\nabla u \cdot \nabla^\perp \calL}_{C^{\vphi}}$.Therefore, the estimate cannot be closed.

    On the other hand, Chemin proved global well-posedness based on paradifferential calculus. Nevertheless, our function space is not easily characterized using Littlewood-Paley decomposition, which poses an additional obstacle.

\section*{Acknowledgements}
The author would like to express sincere gratitude to Professor In-Jee Jeong for his invaluable guidance and support throughout this work. The author was NRF grant from the Korea government (MSIT), No. 2022R1C1C1011051, RS-2024-00406821. 

\bibliographystyle{alpha}
\bibliography{ref.bib}

\end{document}